\documentclass{amsart}   	% use "amsart" instead of "article" for AMSLaTeX format                		% See geometry.pdf to learn the layout options. There are lots.
%%%%%%%%%%%%%%%%%%%%%%%%%%%%%
%%%%     ut-thesis settings              %%%%
%%%%%%%%%%%%%%%%%%%%%%%%%%%%%
\oddsidemargin 0.200 true in
\evensidemargin 0.200 true in
\marginparwidth 1 true in
\topmargin -0.5 true in
\textheight 9 true in
\textwidth 6.0 true in

%%%%%%%%%%%%%%%%%%%%%%%%%%%%%%

\input xypic 
\input xy 
\usepackage{amssymb,amsthm}
\usepackage{amsthm}
\usepackage{yhmath}
\usepackage{graphicx}
\usepackage{enumitem}
\usepackage{tikz}
\usepackage{tikz-cd}
\usetikzlibrary{calc}
\usetikzlibrary{decorations.pathmorphing}
\usepackage{hyperref}
\usepackage{adjustbox}
\usepackage{mathtools}
\usepackage[all]{xy}

\tikzset{curve/.style={settings={#1},to path={(\tikztostart)
    .. controls ($(\tikztostart)!\pv{pos}!(\tikztotarget)!\pv{height}!270:(\tikztotarget)$)
    and ($(\tikztostart)!1-\pv{pos}!(\tikztotarget)!\pv{height}!270:(\tikztotarget)$)
    .. (\tikztotarget)\tikztonodes}},
    settings/.code={\tikzset{quiver/.cd,#1}
        \def\pv##1{\pgfkeysvalueof{/tikz/quiver/##1}}},
    quiver/.cd,pos/.initial=0.35,height/.initial=0}

\tikzset{tail reversed/.code={\pgfsetarrowsstart{tikzcd to}}}
\tikzset{2tail/.code={\pgfsetarrowsstart{Implies[reversed]}}}
\tikzset{2tail reversed/.code={\pgfsetarrowsstart{Implies}}}
% TikZ arrow styles.
\tikzset{no body/.style={/tikz/dash pattern=on 0 off 1mm}}

\graphicspath{ {./images/} }
%SetFonts

%SetFonts

\newtheorem{theorem}{Theorem}[section]
\newtheorem{proposition}[theorem]{Proposition}
\newtheorem{lemma}[theorem]{Lemma}
\newtheorem{corollary}[theorem]{Corollary}

\theoremstyle{definition}
\newtheorem{definition}[theorem]{Definition}
\newtheorem{remark}[theorem]{Remark}
\newtheorem{example}[theorem]{Example}

% Abbreviations

\newcommand{\conn}{\ensuremath{\#}}

\newcommand{\zk}{\ensuremath{\mathcal{Z}_{K}}}

% expression abbreviations

\newcommand{\hlgy}[1]{\ensuremath{H_{*}(#1)}}
\newcommand{\rhlgy}[1]{\ensuremath{\widetilde{H}_{*}(#1)}}

% list command
\newcounter{bean}

% basic diagram commands
\newcommand{\namedright}[3]{\ensuremath{#1\stackrel{#2}
 {\longrightarrow}#3}}
\newcommand{\nameddright}[5]{\ensuremath{#1\stackrel{#2}
 {\longrightarrow}#3\stackrel{#4}{\longrightarrow}#5}}
\newcommand{\namedddright}[7]{\ensuremath{#1\stackrel{#2}
 {\longrightarrow}#3\stackrel{#4}{\longrightarrow}#5
  \stackrel{#6}{\longrightarrow}#7}}

% extended diagram commands
\newcommand{\larrow}{\relbar\!\!\relbar\!\!\rightarrow}
\newcommand{\llarrow}{\relbar\!\!\relbar\!\!\larrow}
\newcommand{\lllarrow}{\relbar\!\!\relbar\!\!\llarrow}

\newcommand{\lnamedright}[3]{\ensuremath{#1\stackrel{#2}
 {\larrow}#3}}
\newcommand{\lnameddright}[5]{\ensuremath{#1\stackrel{#2}
 {\larrow}#3\stackrel{#4}{\larrow}#5}}

\newcommand{\llnameddright}[5]{\ensuremath{#1\stackrel{#2}
 {\llarrow}#3\stackrel{#4}{\llarrow}#5}}

\newcommand{\lllnameddright}[5]{\ensuremath{#1\stackrel{#2}
 {\lllarrow}#3\stackrel{#4}{\lllarrow}#5}}

%miscellaneous commands
\newcommand{\qqed}{\hfill\Box}

\usepackage[utf8]{inputenc}

\title[Loop spaces of Poincar\'{e} Duality complexes] 
{Loop spaces of $n$-dimensional Poincar\'e duality complexes whose $(n-1)$-skeleton 
    is a co-$H$-space}

\author{Lewis Stanton} 
\address{Mathematical Sciences, University of Southampton, Southampton SO17 1BJ, United Kingdom}
\email{lrs1g18@soton.ac.uk}

\author{Stephen Theriault}
\address{Mathematical Sciences, University of Southampton, Southampton SO17 1BJ, United Kingdom}
\email{s.d.theriault@soton.ac.uk}

\subjclass[2020]{Primary 55P35, 57R19}
\keywords{Poincar\'{e} Duality complex, co-$H$-space, loop space, homotopy type}

\begin{document}

\begin{abstract}
Under certain hypotheses, we prove a loop space decomposition for simply-connected Poincar\'e Duality complexes of dimension~$n$ whose $(n-1)$-skeleton is a co-$H$-space. This unifies many known decompositions 
obtained in different contexts and establishes many new families of examples. As consequences, 
we show that such a looped Poincar\'{e} Duality complex retracts off the loops of its 
$(n-1)$-skeleton and describe its homology as a one-relator algebra.
\end{abstract}

\maketitle

\section{Introduction}

A simply-connected $CW$-complex $M$ is a \emph{Poincar\'{e} Duality complex} if the cohomology of $M$ satisfies Poincar\'{e} Duality. Examples include simply-connected closed $n$-dimensional manifolds. There has been a great deal of activity recently in studying the homotopy theory of Poincar\'{e} Duality complexes. This often takes the form of a loop space decomposition: if $\Omega M$ is the based loop space of $M$ then the goal is to show that $\Omega M$ is homotopy equivalent to a product of other spaces. One consequence is that the homotopy groups of $M$ can then be described in terms of the homotopy groups of the factors. Ideally, the factors are recognisable spaces whose homotopy groups are known through a range or have appealing global properties. 

For example, using different methods, in~\cite{BB1,BT1} it was shown that if $M$ is a simply-connected $4$-manifold and the rank of $H^{2}(M)$ is at least $2$ then $\Omega M$ is homotopy equivalent to an explicit product of spheres and loops on spheres. Consequently, the homotopy groups of a simply-connected $4$-manifold can be determined to the same extent as the homotopy groups of spheres. Other families of manifolds for which loop space decompositions into recognisable factors are known include \mbox{$(n-1)$}-connected $2n$-dimensional manifolds~\cite{BB1,BT1} and $(n-1)$-connected $(2n+1)$-dimensional manifolds where either $H^{n}(M)$ is torsion-free and of rank at least one~\cite{Bas,BT2} or $H^{n}(M)$ has torsion but $n$ is even~\cite{BW,HT1}.

In general, let $\overline{M}$ be the $(n-1)$-skeleton of $M$. In the cases 
mentioned and in many other known homotopy decompositions for $\Omega M$ it turns out 
that $\overline{M}$ is a co-$H$-space (in fact, usually a suspension). This led J. Wu to 
ask if there is a general decomposition formula for $\Omega M$ if~$\overline{M}$ is a 
co-$H$-space. The purpose of this paper is to show that this is true, given an extra hypothesis. 

The hypothesis is that if~$M$ is $(m-1)$-connected then there is a map 
\(\namedright{S^{m}}{}{M}\) 
with a left homotopy inverse. For $m<n$, as $\overline{M}$ is a co-$H$-space, this implies 
that there is a homotopy equivalence $\overline{M}\simeq S^{m}\vee A$ for some space $A$, 
so the hypothesis can be interpreted as a sort of partial freeness condition. To describe the 
decomposition another space is needed. Poincar\'{e} Duality implies that $\overline{M}$ 
has dimension $n-m$, so the same is true for $A$. It also implies that in 
$\overline{M}\simeq S^{m}\vee A$ there is a class $y\in H^{n-m}(A)$ whose cup product 
with the cohomology class corresponding to $S^{m}$ equals the generator of $H^{n}(M)$. 
It will be shown that there is a space $B$ and a homotopy cofibration 
\(\nameddright{B}{}{A}{}{S^{n-m}}\), 
where $S^{n-m}$ corresponds to $y$.

\begin{theorem} 
   \label{main} 
   Let $M$ be an $(m-1)$-connected $n$-dimensional Poincar\'{e} Duality complex where $2\leq m<n$. Suppose that $\overline{M}$ is a co-$H$-space and there is a map 
   \(\namedright{S^{m}}{}{M}\) 
   with a left homotopy inverse 
   \(\namedright{M}{}{S^{m}}\). 
   Then there is a homotopy fibration 
   \[\nameddright{A\vee(B\wedge\Omega S^{m})}{}{M}{}{S^{m}}\] 
   that splits after looping to give a homotopy equivalence 
   \[\Omega M\simeq\Omega S^{m}\times\Omega(A\vee (B\wedge\Omega S^{m})).\]
\end{theorem}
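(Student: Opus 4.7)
The plan is to use the section hypothesis to reduce to identifying the homotopy fiber of a retraction $r\colon M\to S^m$, construct an explicit comparison map from $A\vee(B\wedge\Omega S^m)$ into that fiber, and verify that the comparison is a homotopy equivalence by a homology calculation.

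Write $s\colon S^m\to M$ for the given map with left homotopy inverse $r$, and let $F$ be the homotopy fiber of $r$. Since $\Omega r$ then admits $\Omega s$ as a right homotopy inverse, standard fiber-sequence arguments give $\Omega M\simeq\Omega S^m\times\Omega F$, so that the looped splitting asserted in the theorem follows automatically once an equivalence $F\simeq A\vee(B\wedge\Omega S^m)$ is produced. A second useful consequence of the section is that the connecting map $\delta\colon\Omega S^m\to F$ is null-homotopic: any loop in $S^m$ lifts through $s$ to a loop in $M$, which therefore ends at the basepoint of $F$.

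Next I would construct a comparison map $\nu\colon A\vee(B\wedge\Omega S^m)\to F$ in two pieces. The composite $A\hookrightarrow S^m\vee A\simeq\overline{M}\hookrightarrow M$ composes trivially with $r$, since $r|_{\overline{M}}$ may be taken to be the wedge projection, and so lifts to a map $j\colon A\to F$. For the smash summand I would use the principal action $\mu\colon F\times\Omega S^m\to F$: pairing $B\to A\to F$ with $\mu$ gives $B\times\Omega S^m\to F$, which thanks to the null-homotopy of $\delta$ descends to the half-smash $B\rtimes\Omega S^m\to F$; if $B$ is a co-$H$-space (which should follow from $A$ being one together with the cofibration $B\to A\to S^{n-m}$ in the range $2m<n$), the half-smash splits and one extracts $B\wedge\Omega S^m\to F$. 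Wedging with $j$ produces $\nu$.

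To show $\nu$ is a homotopy equivalence, I would appeal to Whitehead's theorem: the source and target are simply connected, so it suffices to check $\nu$ induces an isomorphism on integral homology. The section makes the Serre spectral sequence of $F\to M\to S^m$ collapse, giving $H_*(M)\cong H_*(S^m)\otimes H_*(F)$ as graded modules; together with the cell structure of $M$ (cells in degrees $m,\ldots,n-m$ and $n$), this determines $H_*(F)$ in closed form. On the source, $H_*(\Omega S^m)$ is polynomial on a single generator of degree $m-1$, and the long exact sequence of $B\to A\to S^{n-m}$ computes $H_*(B)$ in terms of $H_*(A)$. Comparing via the naturality of the action $\mu$ then finishes the verification.

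The main obstacle is this last homology computation, and within it the crucial point is to see why the smash summand in the fiber is $B\wedge\Omega S^m$ rather than $A\wedge\Omega S^m$ (which is what the same construction applied to $\overline{M}$ in place of $M$ would yield, since the fiber of $\overline{M}=S^m\vee A\to S^m$ is $A\rtimes\Omega S^m\simeq A\vee(A\wedge\Omega S^m)$). The key input is that the top-cell attaching map $\phi\colon S^{n-1}\to\overline{M}$, interpreted through the fiber $A\vee(A\wedge\Omega S^m)$ of $\overline{M}\to S^m$, has its $(A\wedge\Omega S^m)$-component forced by Poincar\'{e} duality---through the cup product $x\cup y$ generating $H^n(M)$---to detect the top cell $S^{n-m}$ of $A$. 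Consequently the $n$-cell of $M$ absorbs precisely the $S^{n-m}\wedge\Omega S^m$ summand in the fiber, leaving the cofibration remainder $B\wedge\Omega S^m$; the hypothesis $m<n-m$ ensures that the Hilton-type splittings needed to extract these summands lie in the valid range.
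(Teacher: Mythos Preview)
Your overall strategy---construct a map into the fiber $F$ and verify it is an equivalence on homology---is reasonable, but the homology step has a genuine gap. The claim that the section forces the Serre spectral sequence of $F\to M\to S^{m}$ to collapse is false. A section guarantees only that the generator of $H_{m}(S^{m})$ survives (equivalently, that the transgression vanishes), not that all Wang differentials $d_{m}\colon H_{q}(F)\to H_{q-m+1}(F)$ vanish. Indeed, the fibration $A\rtimes\Omega S^{m}\to S^{m}\vee A\to S^{m}$ sitting over $\overline{M}$ itself has a section, yet its spectral sequence is far from collapsing: $H_{\ast}(A\rtimes\Omega S^{m})$ is infinitely generated while $H_{\ast}(S^{m}\vee A)$ is not. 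So you cannot read off $H_{\ast}(F)$ this way, and without it the verification that $\nu_{\ast}$ is an isomorphism never gets started.

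Your final paragraph is much closer to the actual mechanism, and the paper makes it precise as follows. Writing $E$ for the fiber of $\overline{M}\to S^{m}$ and $E'=F$ for the fiber of $M\to S^{m}$, a construction of Beben--Theriault gives a homotopy cofibration $S^{n-1}\rtimes\Omega S^{m}\xrightarrow{\theta}E\to E'$. The hard input, imported from~\cite{T2} and resting squarely on the Poincar\'{e} duality pairing $x\cup y$, is an explicit left inverse for $\theta$: a map $\gamma\colon E\to S^{n-m}\rtimes\Omega S^{m}$, built from the coaction on $\overline{M}$ detecting the dual class $y$, such that $q\circ\gamma\circ\theta$ is a homotopy equivalence. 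The paper then shows (Lemmas~\ref{gammaclass}--\ref{qcofib}) that under $E\simeq A\rtimes\Omega S^{m}$ the map $q\circ\gamma$ becomes $q\circ(p''\rtimes 1)$, which sits in a cofibration $A\vee(B\wedge\Omega S^{m})\to A\rtimes\Omega S^{m}\to S^{n-m}\wedge\Omega S^{m}$; a comparison of cofibration diagrams then yields $E'\simeq A\vee(B\wedge\Omega S^{m})$ without ever computing $H_{\ast}(E')$ directly. Incidentally, the hypothesis $m<n-m$ enters not through a Hilton splitting but to make the pinch $\overline{M}\to S^{n-m}$ factor through $A$ (Lemma~\ref{Afactor}) and to control a term in a Serre exact sequence (Lemma~\ref{p''cofib}).
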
 

This decomposition also satisfies a naturality property which is discussed in detail in Remark~\ref{natremark}.
We also show that if $\overline{M}$ is homotopy equivalent to a wedge 
of spheres and Moore spaces then so are $A$ and $B$, and their homotopy types can be 
simply read off from the homology of $M$. Thus in these cases the homotopy type of $\Omega M$ is very 
explicit and completely determined by the homology of $M$. 

Theorem~\ref{main} unifies a wide range of decomposition results. 
Manifolds satisfying the hypotheses of Theorem ~\ref{main} include $(n-1)$-connected, $(2n)$-dimensional manifolds with $n \notin \{2,4,8\}$ and the $(n-1)$-connected $(2n+1)$-dimensional manifolds described above, as do connected sums of products of two spheres 
or connected sums of nontrivial $S^{n-m}$-bundles over $S^{m}$ with \mbox{$m< n-1$.} 
A new case is a moment-angle manifold associated to a neighbourly simplicial complex. 
%The $(n-1)$-connected $2n$-dimensional manifolds do not satisfy the hypotheses of Theorem~\ref{main} only because $n-m=m$, but it is likely the result 
%could be extended to include this case, at least in the absence of elements of Hopf invariant one.

We go on to significantly extend the known families of examples by showing that the hypotheses of 
Theorem~\ref{main} are preserved under the operations of connected sum and gyration (a type of surgery). Explicitly, let $\mathcal{A}$ be the family of Poincar\'{e} Duality complexes that satisfy the hypotheses of Theorem~\ref{main}. We show that if $M\in\mathcal{A}$ and $N$ is a simply-connected Poincar\'{e} Duality complex such that~$\overline{N}$ is a co-$H$-space and the connectivity of $N$ is at least that of $M$, then the connected sum $M\conn N$ is in $\mathcal{A}$. We also show that if $M\in\mathcal{A}$ and $\mathcal{G}^{k}_{\tau}(M)$ is a twisted $k$-gyration of $M$, then $\mathcal{G}^{k}_{\tau}(M)\in\mathcal{A}$. In particular, applying Theorem~\ref{main} gives an integral homotopy decomposition for $\Omega\mathcal{G}^{k}_{\tau}(M)$, improving on the local decomposition in~\cite{HT2} that inverted primes related to the $J$-homomorphism. 

Theorem~\ref{main} leads to other good properties 
of Poincar\'{e} Duality complexes in $\mathcal{A}$. One benefit is to describe the effect in homotopy of the attaching map for the $n$-cell of $M$. Recall that there is a homotopy cofibration 
\[\nameddright{S^{n-1}}{f}{\overline{M}}{i}{M}\] 
where $f$ is the attaching map for the $n$-cell of $M$ and $i$ is the skeletal inclusion. An approach is developed in~\cite{BT2} to study the homotopy theory of $M$ by studying properties of the spaces and maps in this homotopy cofibration. A particularly useful special case is when $\Omega i$ has a right homotopy inverse, that is, when $\Omega M$ retracts off $\Omega\overline{M}$. In this case the attaching map $f$ is called \emph{inert}, generalising a similar notion in rational homotopy theory~\cite{HL}. The inert property was shown in~\cite[Proposition 3.5]{BT2} to imply that there is a homotopy fibration 
\begin{equation} 
  \label{Mbarfib} 
  \nameddright{S^{n-1}\rtimes\Omega M}{}{\overline{M}}{}{M}  
\end{equation} 
that splits after looping to give a homotopy equivalence 
\begin{equation} 
  \label{Mbardecomp} 
  \Omega\overline{M}\simeq\Omega M\times\Omega(S^{n-1}\rtimes\Omega M). 
\end{equation} 
Here, for spaces $A$ and $B$, the \emph{right half-smash} $A\rtimes B=(A\times B)/\sim$ is the quotient space given by collapsing the subspace $\{\ast\}\times B$ to the basepoint, and if $A$ is a co-$H$-space then there is a homotopy equivalence $A\rtimes B\simeq A\vee (A\wedge B)$. Rationally, Halperin and Lemaire~\cite{HL} showed that the attaching map for the $n$-cell of any simply-connected $n$-dimensional Poincar\'{e} Duality complex is inert, provided the rational cohomology is generated by more than one element. The second author~\cite{T1} showed 
that if $M$ is $(m-1)$-connected and there is a map 
\(\namedright{S^{m}}{}{M}\) 
having a left homotopy inverse, then the attaching map for the $n$-cell in $M$ is 
\emph{integrally} inert. Thus every member of the class $\mathcal{A}$ has the following property.

\begin{theorem} 
   \label{introinert} 
   If $M$ is an $n$-dimensional Poincar\'{e} Duality complex in $\mathcal{A}$ then 
   the attaching map for the $n$-cell of $M$ is integrally inert.~$\qqed$ 
\end{theorem}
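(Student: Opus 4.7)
The plan is to observe that Theorem~\ref{introinert} is an immediate consequence of the cited result of the second author~\cite{T1}, once the definition of the class~$\mathcal{A}$ is unpacked. By definition, $\mathcal{A}$ consists of those $n$-dimensional Poincar\'e Duality complexes satisfying the hypotheses of Theorem~\ref{main}. So if $M\in\mathcal{A}$, then in particular $M$ is $(m-1)$-connected for some $m$ with $2\leq m<n-m$, and there is a map \(\namedright{S^{m}}{}{M}\) admitting a left homotopy inverse.

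These last two properties are precisely the hypotheses that appear in the cited theorem from~\cite{T1}, namely that $M$ is $(m-1)$-connected and that there is a map \(\namedright{S^{m}}{}{M}\) with a left homotopy inverse, under which the attaching map for the $n$-cell of $M$ is integrally inert. Therefore the proof is simply to extract these two properties from membership in~$\mathcal{A}$ and invoke~\cite{T1} verbatim; the remaining hypotheses (that $\overline{M}$ is a co-$H$-space, and the quantitative constraint $m<n-m$) are part of the definition of~$\mathcal{A}$ but are not needed for the inertness conclusion itself.

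There is no substantive obstacle, which is why the statement is closed with a~$\qqed$ immediately after its assertion. The value of recording Theorem~\ref{introinert} is not in its proof but rather in packaging the inertness property together with the extensive families of examples (connected sums, gyrations, moment-angle manifolds, etc.) that are shown in the body of the paper to belong to~$\mathcal{A}$, and in placing it alongside the consequences~(\ref{Mbarfib}) and~(\ref{Mbardecomp}) of inertness that are used elsewhere.
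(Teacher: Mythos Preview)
Your proposal is correct and matches the paper's approach exactly: the theorem is stated with a $\qqed$ because the sentence immediately preceding it (citing~\cite{T1}, restated later as~\cite{T2} in Section~\ref{sec:inert}) already supplies the entire argument, namely that membership in~$\mathcal{A}$ provides the map $S^{m}\to M$ with a left homotopy inverse, which is precisely the hypothesis needed to invoke the cited inertness result. Your observation that the co-$H$ condition and the inequality $m<n-m$ are not used for inertness itself is also accurate.
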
 

Theorem~\ref{introinert} has useful consequences. One is that the homotopy decomposition of $\Omega M$ 
in Theorem~\ref{main} then refines the decomposition of $\Omega\overline{M}$ 
via~(\ref{Mbarfib}) and~(\ref{Mbardecomp}). Another is that Theorem~\ref{main} 
can now be used to identify new families of Poincar\'{e} Duality complexes whose attaching map for 
the top cell is integrally inert. This includes all gyrations $\mathcal{G}^{k}_{\tau}(M)$ with $M\in\mathcal{A}$, substantially adding to the examples in~\cite{Hu}, and simply-connected $6$-manifolds $M$ with regular circle action for which $H_{2}(M)$ has at least one $\mathbb{Z}$ summand. 

A second benefit of Theorem~\ref{main} is that the co-$H$-property of $\overline{M}$ 
combined with the inert property of the attaching map for the $n$-cell of $M$ 
leads to a calculation of $H_{\ast}(\Omega M;R)$ as an algebra. To state this, 
let $R$ be a commutative ring with unit and let $V$ be a free graded $R$-module. Write 
$\Sigma^{-1} V$ for the free graded $R$-module whose generators are shifted down one degree 
as compared to those in $V$. Let $T(\ \ )$ be the free tensor algebra functor. If $M$ is 
$n$-dimensional, let 
\(\namedright{S^{n-1}}{f}{\overline{M}}\) 
be the attaching map for the $n$-cell of $M$ and let 
\(\namedright{S^{n-2}}{\widetilde{f}}{\Omega\overline{M}}\) 
be its adjoint. 

\begin{theorem} 
   \label{introloophlgy} 
   Let $M$ be an $n$-dimensional Poincar\'{e} Duality complex in $\mathcal{A}$ and 
   let $R$ be a commutative ring with unit such that $\rhlgy{\overline{M};R}$ is a 
   free $R$-module. Then there is an isomorphism of algebras 
   \[\hlgy{\Omega M;R}\cong T(\Sigma^{-1}\rhlgy{\overline{M};R})/(\mathrm{Im}\,(\widetilde{f}_{\ast}))\] 
   where $(\mathrm{Im}\,(\widetilde{f}_{\ast}))$ is the two-sided ideal generated by 
   $\mathrm{Im}\,(\widetilde{f}_{\ast})$. 
\end{theorem}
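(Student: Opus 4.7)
The plan is to combine the inertness of the attaching map guaranteed by Theorem~\ref{introinert} with the identification of $\hlgy{\Omega\overline{M};R}$ as a free tensor algebra coming from the co-$H$-structure on $\overline{M}$. First, since $M\in\mathcal{A}$, Theorem~\ref{introinert} tells us that $f$ is integrally inert, so $\Omega i$ admits a right homotopy inverse. Consequently $(\Omega i)_{\ast}\colon\hlgy{\Omega\overline{M};R}\to\hlgy{\Omega M;R}$ is a surjective homomorphism of Pontryagin algebras, and $\hlgy{\Omega M;R}$ is canonically identified with the quotient of $\hlgy{\Omega\overline{M};R}$ by the two-sided ideal $\ker(\Omega i)_{\ast}$.

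Next, because $\overline{M}$ is a simply-connected co-$H$-space, a theorem of Ganea provides a homotopy equivalence $\overline{M}\simeq\Sigma X$ for some space $X$ with $\rhlgy{X;R}\cong\Sigma^{-1}\rhlgy{\overline{M};R}$. Since $\rhlgy{\overline{M};R}$ is free, the Bott--Samelson theorem then yields an algebra isomorphism
\[\hlgy{\Omega\overline{M};R}\cong T(\Sigma^{-1}\rhlgy{\overline{M};R}).\]
One containment of ideals is automatic: since $i\circ f$ is null-homotopic, so is $\Omega i\circ\widetilde{f}$, and hence $\mbox{Im}(\widetilde{f}_{\ast})\subseteq\ker(\Omega i)_{\ast}$. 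Because the latter is a two-sided ideal, this gives $(\mbox{Im}(\widetilde{f}_{\ast}))\subseteq\ker(\Omega i)_{\ast}$ and produces a surjection of algebras
\[T(\Sigma^{-1}\rhlgy{\overline{M};R})/(\mbox{Im}(\widetilde{f}_{\ast}))\twoheadrightarrow\hlgy{\Omega M;R}.\]

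The main obstacle is establishing the reverse inclusion $\ker(\Omega i)_{\ast}\subseteq(\mbox{Im}(\widetilde{f}_{\ast}))$. My plan is to prove this via a Poincar\'{e} series comparison using Theorem~\ref{main}. The decomposition $\Omega M\simeq\Omega S^{m}\times\Omega(A\vee(B\wedge\Omega S^{m}))$ allows us to compute $\hlgy{\Omega M;R}$ explicitly through the K\"{u}nneth theorem, together with Bott--Samelson applied to the suspension $A\vee(B\wedge\Omega S^{m})$ and the standard description of $\hlgy{\Omega S^{m};R}$; this yields the Poincar\'{e} series of the right-hand side. On the other side, $\widetilde{f}_{\ast}$ is supported in a single degree, so the two-sided ideal $(\mbox{Im}(\widetilde{f}_{\ast}))$ in the free tensor algebra has an explicit single generator whose contribution to the Poincar\'{e} series of the quotient can be counted directly. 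Matching the two series forces the surjection above to be an isomorphism. The delicate step is pinning down the class $\widetilde{f}_{\ast}(\iota)$ in the free tensor algebra by using Poincar\'{e} Duality on $M$ together with the cofibration $B\to A\to S^{n-m}$, so that the two Poincar\'{e} series accounts actually line up degree by degree.
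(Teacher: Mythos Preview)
There are two problems with your plan, one cosmetic and one substantive.

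First, the claim that ``a theorem of Ganea provides a homotopy equivalence $\overline{M}\simeq\Sigma X$'' is false: simply-connected co-$H$-spaces are not suspensions in general. What you actually need here is Berstein's theorem, which gives directly an algebra isomorphism $H_{\ast}(\Omega C;R)\cong T(\Sigma^{-1}\widetilde{H}_{\ast}(C;R))$ for any simply-connected co-$H$-space $C$ with $\widetilde{H}_{\ast}(C;R)$ free. The paper invokes exactly this in place of Bott--Samelson.

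Second, and this is the real gap, the Poincar\'{e} series half of your argument does not work as stated. The Hilbert series of $T(V)/(r)$ for a single homogeneous relation $r$ is \emph{not} determined by the degree of $r$ alone; it equals the ``expected'' value $(1-P_{V}(t)+t^{\lvert r\rvert})^{-1}$ precisely when $r$ is strongly free (algebraically inert), and otherwise it is strictly larger coefficient-wise. So ``counting directly'' already presupposes the conclusion. Your surjection gives $P(T(V)/(r))\geq P(H_{\ast}(\Omega M;R))$, and the generic inequality for one-relator quotients also points the same way, so you never get the reverse bound for free. Pinning down $\widetilde{f}_{\ast}(\iota)$ explicitly does not help unless you separately prove that this particular element is strongly free in $T(\Sigma^{-1}\widetilde{H}_{\ast}(\overline{M};R))$, which is exactly the content of the theorem.

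The paper sidesteps all of this. It quotes a general result (\cite[Proposition~10.1]{T1}): for any cofibration $\Sigma A\xrightarrow{f}\Sigma X\xrightarrow{h}Y$ with $\Omega h$ right-split, one has $H_{\ast}(\Omega Y;R)\cong T(\widetilde{H}_{\ast}(X;R))/(\mathrm{Im}\,\widetilde{f}_{\ast})$. That proof identifies $\ker(\Omega h)_{\ast}$ geometrically via the fibration $\Sigma A\rtimes\Omega Y\to\Sigma X\to Y$ and its loop-splitting, rather than by a series comparison. The paper then observes that the same argument goes through verbatim for a co-$H$ target once Bott--Samelson is replaced by Berstein's theorem, and applies it to $S^{n-1}\xrightarrow{f}\overline{M}\to M$.
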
   

In particular, Theorem~\ref{introloophlgy} implies that $\hlgy{\Omega M;R}$ is a 
\emph{one-relator algebra}, a free algebra with only a single relation. Moreover, if $M$ satisfies the hypotheses of Theorem ~\ref{introloophlgy} with $n \leq 3m-2$ and $R = \mathbb{Q}$, the relation is quadratic (see Remark ~\ref{rmk:quadrel}).

The families of examples that satisfy Theorem~\ref{main} can be extended further by localising. They include highly connected Poincar\'e duality complexes and moment-angle manifolds associated to minimally non-Golod complexes. In particular, in the first case, we show the following. 

\begin{theorem}
\label{thm:introloopPDcomplextors}
   Let $M$ be an $(m-1)$-connected, closed Poincar\'e duality complex of dimension $n$, where $2 \leq m < n$ and $n \leq 3m-1$. Let $k$ be the least integer such that $H_k(M)$ contains a $\mathbb{Z}$ summand, and suppose that $k < n$. If $k$ is even and $k=m = n-m$, suppose there exists a generator $x \in H^k(M)$ such that $x^2 = 0$. Localise away from primes $p$ appearing as $p$-torsion in $H_*(M)$, and primes $p \leq \frac{n-k+3}{2}$ if $k$ is odd, or primes $p \leq \frac{n-k+4}{2}$ if $k$ is even. Then there is a homotopy fibration 
   \[\nameddright{A\vee(B\wedge\Omega S^{k})}{}{M}{h'}{S^{k}},\] where $A$ and $B$ are wedges of spheres that can be explicitly enumerated as in Corollary ~\ref{cor:MbarwedgeofsphereandMoore}.
   Moreover, this homotopy fibration splits after looping to give a homotopy equivalence 
   \[\Omega M\simeq\Omega S^{m}\times\Omega(A\vee(B\wedge\Omega S^{m})).\]
\end{theorem}

Theorem~\ref{thm:introloopPDcomplextors} modestly improves on 
 and brings under the unifying umbrella of Theorem~\ref{main} a result of Basu and Basu~\cite{BB2}, who gave a local decomposition of such an $M$ provided $n\leq 3m-2$. Their list of inverted primes and their method of proof is different. They localised away from primes appearing as torsion in the integral homology of $M$ and a set of primes depending on the image of the rational Hurewicz homomorphism. Their proof first calculated the local homology of $\Omega M$ and then used this as a guide to identify what the factors of $\Omega M$ should be. The advantages of our approach are that the primes that must be inverted are more easily described and the local homology of $\Omega M$ can be recovered topologically via Theorem~\ref{introloophlgy}. Moreover, in the dimensional range $n \leq 3m-2$, we give a local decomposition of $\Omega M$ that allows for large primes in homology (see Theorem ~\ref{thm:loopPDcomplextors}).  
\medskip 

This paper is organised as follows. Theorem~\ref{main} is proved in 
Section~\ref{sec:DecompofPDsusskel} and its refinement when~$\overline{M}$ is a 
wedge of spheres and Moore spaces is proved in Section~\ref{sec:Refinement}. 
Theorems~\ref{introinert} and~\ref{introloophlgy} are proved in Section~\ref{sec:inert}. 
Examples are given in Section~\ref{sec:examples} and these are significantly expanded 
on in Section~\ref{sec:connsumgyration} by showing the class $\mathcal{A}$ is closed under 
the connected sum and gyration operations. Sections~\ref{sec:localdecomphighlyconnCW} 
and~\ref{sec:highyconnPDcomplex} extend the integral results to local settings.

The authors thank the referee for several helpful comments which improved the paper and suggesting that a naturality statement be included.

\section{Loop space decompositions of Poincar\'e duality complexes with \texorpdfstring{$(n-1)$}{(n-1)}-skeleton a co-\texorpdfstring{$H$}{H}-space}
\label{sec:DecompofPDsusskel}

Let $M$ be a simply-connected, closed $n$-dimensional Poincar\'{e} Duality complex. As $M$ is simply-connected, 
it has a $CW$-structure with a single $n$-cell; fix such a $CW$-structure. Let $\overline{M}$ be the $(n-1)$-skeleton of $M$. Note that 
as $M$ is closed then $\overline{M}$ is homotopy equivalent to $M$ with a puncture. Observe that 
there is a homotopy cofibration 
\[\nameddright{S^{n-1}}{f}{\overline{M}}{i}{M}\] 
where $f$ attaches the $n$-cell to $M$ and $i$ is the inclusion of the $(n-1)$-skeleton. 

Suppose that $M$ is $(m-1)$-connected for some $2\leq m<n$. Note that if $H_m(M) \neq 0$ then $m<n$ implies that 
$M\not\simeq S^{n}$. As we proceed two hypotheses will be introduced: 
\begin{itemize} 
   \item $\overline{M}$ is a co-$H$-space;
   \item there is a map 
         \(\namedright{S^{m}}{s'}{M}\) 
         that has a left homotopy inverse 
         \(\namedright{M}{h'}{S^{m}}\). 
\end{itemize} In this section, a decomposition of $\Omega M$ is given under these hypotheses. Examples of families of such Poincar\'{e} Duality complexes will be given in Section ~\ref{sec:examples} and Section ~\ref{sec:connsumgyration}. 

%{\color{blue}To begin, we show that retraction of $S^m$ off $M$ implies a retraction of $S^m$ off $\overline{M}$. 
%\begin{lemma}
%\label{lem:Smretoffskel}
%    Let $M$ be an $(m-1)$-connected, $n$-dimensional Poincar\'e duality complex with $2 \leq m < n$. Suppose there is a map $s':M \rightarrow S^m$ which has a left homotopy inverse $h':M \rightarrow S^m$. Then there is a map $s:S^m \rightarrow \overline{M}$, which is a right homotopy inverse for the composite $h:\overline{M} \xrightarrow{i} M \xrightarrow{h'} S^m$.
%\end{lemma}

%\begin{proof}
%Since $m<n$, the map 
%\(\namedright{S^{m}}{s'}{M}\) 
%factors through the $(n-1)$-skeleton of $M$ as a composite 
%\[\nameddright{S^{m}}{s}{\overline{M}}{i}{M}\] 
%for some map $s$. Notice that as $h'$ is a left homotopy inverse for $s'$, the composite 
%\[h\colon\nameddright{\overline{M}}{i}{M}{h'}{S^{m}}\] 
%is a left homotopy inverse for $s$.    
%\end{proof}} 

Since $m<n$, the map 
\(\namedright{S^{m}}{s'}{M}\) 
factors through the $(n-1)$-skeleton of $M$ as a composite 
\[\nameddright{S^{m}}{s}{\overline{M}}{i}{M}\] 
for some map $s$. Notice that as $h'$ is a left homotopy inverse for $s'$, the composite 
\[h\colon\nameddright{\overline{M}}{i}{M}{h'}{S^{m}}\] 
is a left homotopy inverse for $s$. Define the space $A$ and the map $a$ by the homotopy cofibration 
\begin{equation} 
  \label{Mbarcofib} 
  \nameddright{S^{m}}{s}{\overline{M}}{a}{A}. 
\end{equation}  

\begin{lemma} 
   \label{Mbarsplit} 
   Suppose that $\overline{M}$ is a co-$H$-space with comultiplication $\sigma$. Then 
   the homotopy cofibration~(\ref{Mbarcofib}) splits to give a homotopy equivalence 
   \[e\colon\nameddright{\overline{M}}{\sigma}{\overline{M}\vee\overline{M}}{h\vee a}{S^{m}\vee A}.\] 
\end{lemma}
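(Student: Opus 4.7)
The plan is to prove that $e$ is a homotopy equivalence by showing it induces an isomorphism on reduced integral homology and then invoking Whitehead's theorem. This reduction is legitimate because both $\overline{M}$ and $S^{m}\vee A$ are simply-connected CW complexes: $\overline{M}$ is simply-connected because $M$ is $(m-1)$-connected with $m\geq 2$ and $n\geq 5$ (from $2\leq m<n-m$), so its $(n-1)$-skeleton retains simple-connectivity; and $A$ is simply-connected because it is the mapping cone of $s$, a map between simply-connected spaces.

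The first step is to identify $e_{\ast}$ on $\rhlgy{-}$. Since $\sigma$ is a co-$H$-space comultiplication, the two fold projections $p_{1},p_{2}\colon\overline{M}\vee\overline{M}\to\overline{M}$ satisfy $p_{i}\circ\sigma\simeq\mathrm{id}$; under the canonical splitting $\rhlgy{\overline{M}\vee\overline{M}}\cong\rhlgy{\overline{M}}\oplus\rhlgy{\overline{M}}$, this forces $\sigma_{\ast}$ to be the diagonal $x\mapsto(x,x)$. Postcomposing with $(h\vee a)_{\ast}=h_{\ast}\oplus a_{\ast}$ then yields $e_{\ast}(x)=(h_{\ast}(x),a_{\ast}(x))$ for every $x\in\rhlgy{\overline{M}}$.

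The second step is to exploit the defining cofibration $\nameddright{S^{m}}{s}{\overline{M}}{a}{A}$. The hypothesis $h\circ s\simeq\mathrm{id}_{S^{m}}$ makes $s_{\ast}$ split injective in every degree, which forces the connecting homomorphisms in the associated long exact homology sequence to vanish. One therefore obtains, for each degree, a short exact sequence $0\to\rhlgy{S^{m}}\stackrel{s_{\ast}}{\to}\rhlgy{\overline{M}}\stackrel{a_{\ast}}{\to}\rhlgy{A}\to 0$, retracted by $h_{\ast}$. The resulting splitting isomorphism $\rhlgy{\overline{M}}\cong\rhlgy{S^{m}}\oplus\rhlgy{A}$ is by construction the map $(h_{\ast},a_{\ast})$, which has just been identified with $e_{\ast}$. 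Consequently $e_{\ast}$ is an isomorphism, and Whitehead's theorem delivers the desired homotopy equivalence.

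The only point requiring real attention is the identification of $\sigma_{\ast}$ with the diagonal on reduced homology; once that translation of the co-$H$-structure is in hand, the remainder of the argument is formal bookkeeping with the split short exact sequence, and no further homotopy-theoretic construction is required.
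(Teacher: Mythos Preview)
Your proof is correct and follows essentially the same approach as the paper: both argue that the split short exact sequences arising from the cofibration (split by $h_\ast$) are realized by $e_\ast$, and then invoke Whitehead's theorem. You provide more detail than the paper does, in particular the explicit identification of $\sigma_\ast$ with the diagonal and hence of $e_\ast$ with $(h_\ast,a_\ast)$, which the paper summarizes as ``this is geometrically realized by the composite $e$''.
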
 

\begin{proof} 
Since $h$ is a left homotopy inverse for $s$, the long exact sequence in homology induced by the homotopy 
cofibration~(\ref{Mbarcofib}) degenerates into split short exact sequences in each degree. This is 
geometrically realized by the composite $e$. Thus $e$ induces 
an isomorphism in homology. As all the spaces are simply-connected, $e$ is a homotopy 
equivalence by Whitehead's Theorem. 
\end{proof} 

In general, if $X$ and $Y$ are spaces, the \emph{right half-smash} is the quotient space 
\[X\rtimes Y=(X\times Y)/\sim\] 
where the subspace $\ast\times Y$ is collapsed to a point. Let 
\(p_{1}\colon\namedright{X\vee Y}{}{X}\) 
be the pinch map to the left wedge summand. As in~\cite[Theorem 7.7.7]{Sel}, a method  
developed by Ganea~\cite{G} proves the following. 

\begin{lemma} 
   \label{Ganea} 
   There is a natural homotopy fibration 
   \[\nameddright{Y\rtimes\Omega X}{}{X\vee Y}{p_{1}}{Y}.\eqno\qqed\] 
\end{lemma}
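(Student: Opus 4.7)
The plan is to compute the homotopy fibre of the pinch map $p_{1}\colon X\vee Y\to X$ directly, following the classical Ganea-type argument as presented in \cite[Theorem~7.7.7]{Sel}. To begin, form the homotopy pullback
\[F\;=\;(X\vee Y)\times_{X}PX,\]
whose points are pairs $(z,\gamma)$ with $z\in X\vee Y$ and $\gamma$ a path in $X$ from the basepoint to $p_{1}(z)$. By the standard characterisation, $F$ is the homotopy fibre of $p_{1}$, and the whole construction is visibly natural in the pair $(X,Y)$.

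Next, I would decompose $F$ according to which wedge summand the point $z$ lies in. Over the subspace $X\subseteq X\vee Y$ the map $p_{1}$ is the identity, so this part of the pullback is a copy of $PX$, which is contractible. Over the subspace $Y\subseteq X\vee Y$ the map $p_{1}$ is constant at the basepoint, forcing $\gamma$ to be a loop and contributing the product $Y\times\Omega X$. These two pieces are glued along the fibre over the common basepoint of $X\vee Y$, namely $\{\ast\}\times\Omega X\subseteq Y\times\Omega X$; hence
\[F\;=\;PX\;\cup_{\{\ast\}\times\Omega X}\;(Y\times\Omega X).\]

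Finally, collapse the contractible subspace $PX$ inside $F$. Because $\{\ast\}\times\Omega X\hookrightarrow PX$ is a based cofibration and $PX$ is contractible, this collapse is a homotopy equivalence, and the resulting quotient is exactly $(Y\times\Omega X)/(\{\ast\}\times\Omega X)=Y\rtimes\Omega X$. This exhibits $Y\rtimes\Omega X$ as the homotopy fibre of $p_{1}$, giving the asserted natural homotopy fibration. The only real content is verifying that collapsing $PX$ yields a genuine homotopy equivalence and is functorial in $(X,Y)$; this is precisely where the cofibrational argument of Ganea \cite{G} and the textbook version \cite[Theorem~7.7.7]{Sel} invest their effort, and my proof would follow that model.
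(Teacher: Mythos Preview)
Your argument is correct and is precisely the classical Ganea computation of the homotopy fibre of the pinch map, which is exactly what the paper invokes: the paper gives no proof of its own but simply cites \cite{G} and \cite[Theorem~7.7.7]{Sel}, and your proposal reproduces that standard argument. (Note, incidentally, that the target of $p_{1}$ in the displayed fibration should be $X$, not $Y$, as you correctly read it.)
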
 

In our case, consider $p_{1}\circ e$. The naturality of $p_{1}$ 
implies that $p_{1}\circ e=p_{1}\circ (h\vee a)\circ\sigma\simeq h\circ p_{1}\circ\sigma$. 
Since $\overline{M}$ is a co-$H$-space, $p_{1}\circ\sigma$ is homotopic to the identity map on $\overline{M}$. 
Thus $p_{1}\circ e\simeq h$. Therefore, if $E$ is the homotopy fibre of $h$, we obtain a homotopy fibration diagram 
\begin{equation} 
  \label{e'pb} 
  \diagram 
      E\rto^-{e'}\dto  & A\rtimes\Omega S^{m}\dto \\ 
      \overline{M}\rto^-{e}\dto^{h} & S^{m}\vee A\dto^{p_{1}} \\ 
      S^{m}\rdouble & S^{m} 
  \enddiagram 
\end{equation} 
for some map $e'$. Notice that the upper square is a homotopy pullback. Therefore, 
as $e$ is a homotopy equivalence, we immediately obtain the following. 

\begin{lemma} 
   \label{Etype} 
   The map 
   \(\namedright{E}{e'}{A\rtimes\Omega S^{m}}\) 
   is a homotopy equivalence.~$\qqed$  
\end{lemma}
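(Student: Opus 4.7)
The plan is essentially to observe that this is immediate from the diagram (\ref{e'pb}) and the properties already established. In the homotopy pullback square
\[
\begin{array}{ccc}
E & \xrightarrow{e'} & A\rtimes\Omega S^{m} \\
\downarrow & & \downarrow \\
\overline{M} & \xrightarrow{e} & S^{m}\vee A
\end{array}
\]
the bottom horizontal map $e$ is a homotopy equivalence by Lemma~\ref{Mbarsplit}. Since homotopy pullbacks preserve weak equivalences along the maps they pull back, the induced map $e'$ between the homotopy fibres (over $S^m$) must also be a homotopy equivalence.

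More concretely, I would argue as follows. First, by Lemma~\ref{Ganea} applied to the wedge $S^m\vee A$, the right-hand vertical column in~(\ref{e'pb}) is a homotopy fibration with fibre $A\rtimes\Omega S^m$ over $S^m$. Second, the left-hand vertical column is, by definition, the homotopy fibration with fibre $E$ over $S^m$ determined by $h$. The argument just before the diagram verifies that $p_1\circ e\simeq h$, so the composite $p_1 \circ e$ agrees with $h$ up to homotopy, i.e.\ the lower square of~(\ref{e'pb}) commutes up to homotopy with the base identified. Thus the map $e'$ induced on fibres fits into a map of homotopy fibrations
\[
\nameddright{E}{e'}{A\rtimes\Omega S^{m}}{}{\ast},
\]
over the equivalence $e$ on the total spaces and the identity on the bases. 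By the five lemma applied to the long exact sequences of homotopy groups (or, equivalently, by comparing the induced maps on fibres of a homotopy pullback), $e'$ is a weak homotopy equivalence. Since all spaces involved are simply-connected $CW$-complexes (or have the homotopy type of such), Whitehead's theorem promotes this to a genuine homotopy equivalence.

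There is no real obstacle here: the content has already been packaged into the previous lemmas. The only mild subtlety is to be sure the upper square of~(\ref{e'pb}) is genuinely a homotopy pullback; this is automatic because we defined $e'$ to be the induced map of homotopy fibres over a common base $S^m$, after checking the homotopy $p_1\circ e\simeq h$. Once that is in hand, the conclusion follows immediately from the fact that an equivalence of total spaces over a common base induces an equivalence on homotopy fibres.
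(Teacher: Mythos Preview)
Your proposal is correct and matches the paper's own argument essentially line for line: the paper notes that the upper square of~(\ref{e'pb}) is a homotopy pullback and that $e$ is a homotopy equivalence, from which the conclusion for $e'$ is immediate. Your added justification via the five lemma and Whitehead's theorem is a reasonable elaboration of what the paper leaves implicit.
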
 

Consider the diagram   
\begin{equation} 
  \label{Mdata} 
  \diagram 
        & E\rto\dto & E'\dto \\ 
        S^{n-1}\rto^-{f} & \overline{M}\rto^-{i}\dto^{h} & M\dto^{h'} \\ 
        & S^{m}\rdouble &S^{m}.  
  \enddiagram 
\end{equation}     
Here, the middle row is a homotopy cofibration, the lower square homotopy commutes by definition of $h$ as the restriction of $h'$ to the $(n-1)$-skeleton, $E'$ is defined as the homotopy fibre of $h'$, and the upper square is an induced map of homotopy fibres. With such data, by ~\cite{BT1} there is a homotopy cofibration 
\begin{equation} 
   \label{BTcofib} 
   \nameddright{S^{n-1}\rtimes\Omega S^{m}}{\theta}{E}{}{E'} 
\end{equation} 
for some map $\theta$.
Under certain hypotheses, a left homotopy inverse for $\theta$ and a splitting of the homotopy 
cofibration~(\ref{BTcofib}) were constructed in~\cite{T2}. 

\begin{theorem} 
   \label{Tinert} 
   Suppose that $M$ is $(m-1)$-connected and there is a map 
   \(\namedright{S^{m}}{}{M}\) 
   with a left homotopy inverse 
   \(\namedright{M}{}{S^{m}}\). 
   Then the homotopy cofibration~(\ref{BTcofib}) splits to give a homotopy equivalence 
   $E\simeq E'\vee(S^{n-1}\rtimes\Omega S^{m})$.~$\qqed$ 
\end{theorem}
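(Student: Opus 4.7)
The plan is to construct a left homotopy inverse $r\colon E \to S^{n-1}\rtimes \Omega S^{m}$ to $\theta$; once such an $r$ is in hand, a standard co-$H$ argument applied to the cofibration~(\ref{BTcofib}) yields the wedge decomposition $E\simeq E'\vee (S^{n-1}\rtimes \Omega S^{m})$.

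First I would use Lemma~\ref{Etype} to identify $E \simeq A \rtimes \Omega S^{m}$. Since $A$ is a retract of the co-$H$-space $\overline{M}$ via the splitting of Lemma~\ref{Mbarsplit}, $A$ is itself a co-$H$-space, so $E\simeq A\vee (A\wedge\Omega S^{m})$. Under this identification I would analyse $\theta$ concretely: it encodes the attaching map $f\colon S^{n-1}\to \overline{M}\simeq S^{m}\vee A$ together with the natural $\Omega S^{m}$-action coming from Lemma~\ref{Ganea}, and in particular its restriction to the $S^{n-1}$-summand is a specific lift of $f$ through $h$.

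Next I would invoke the integral inertness result of~\cite{T1}: under the present hypotheses $\Omega i\colon \Omega\overline{M}\to \Omega M$ admits a right homotopy inverse, equivalently there is a homotopy fibration $S^{n-1}\rtimes \Omega M\to \overline{M}\to M$ as in~(\ref{Mbarfib}). Combining this with the product decomposition $\Omega M\simeq \Omega S^{m}\times \Omega E'$ coming from the section $s'$, I would produce a map $\overline{M}\to S^{n-1}\rtimes \Omega S^{m}$ by projecting the $\Omega M$-factor onto $\Omega S^{m}$, and then restrict along the fibre inclusion $E\to \overline{M}$ to obtain the candidate retraction $r$.

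The main obstacle will be the verification that $r\circ\theta\simeq \mathrm{id}$, which demands a careful comparison of two descriptions of $\theta$: the one arising from the homotopy fibration $E\to \overline{M}\stackrel{h}{\to} S^{m}$ pulled back along the top-cell attachment (as produced in~\cite{BT1}), and the one implicit in the retraction $r$. This amounts to a diagram chase using naturality of the Ganea-type construction of Lemma~\ref{Ganea} applied to both $\overline{M}$ and $M$, and tracking how the co-$H$ splitting $\overline{M}\simeq S^{m}\vee A$ interacts with the section $s'$ and the attaching map $f$. Once the two descriptions are matched, the identity $r\circ \theta\simeq \mathrm{id}$ reduces to the fact that the composite $S^{n-1}\rtimes \Omega S^{m}\to \overline{M}\to S^{n-1}\rtimes \Omega S^{m}$ is the identity on the $\Omega S^{m}$-factor, which follows from the explicit form of the inertness splitting.
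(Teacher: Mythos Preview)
There are genuine gaps in your proposal. First, Theorem~\ref{Tinert} does \emph{not} assume that $\overline{M}$ is a co-$H$-space---it is quoted from~\cite{T2} under only the sphere-retraction hypothesis---so your appeal to Lemma~\ref{Etype} (which relies on Lemma~\ref{Mbarsplit} and hence on the co-$H$ structure) is not available in this generality. Second, your construction of $r$ does not parse: the inertness fibration $S^{n-1}\rtimes\Omega M\to\overline{M}\to M$ supplies a map \emph{into} $\overline{M}$, not out of it, so ``projecting the $\Omega M$-factor onto $\Omega S^{m}$'' does not produce a map $\overline{M}\to S^{n-1}\rtimes\Omega S^{m}$. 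Third, there is a circularity risk: integral inertness of the attaching map under exactly these hypotheses is established in~\cite{T2} (not~\cite{T1}), and that argument is intertwined with the very construction underlying Theorem~\ref{Tinert}.

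The idea you are missing is Poincar\'{e} duality. The paper summarises the argument from~\cite{T2} as follows: choose $y\in H^{n-m}(M)$ with $x\cup y$ generating $H^{n}(M)$, where $x=(h')^{\ast}(\iota)$. Using the homotopy coaction on $\overline{M}$ arising from the attachment of its $(n-m)$-cells (no co-$H$ assumption needed), one builds a map $\psi'\colon\overline{M}\to S^{m}\vee S^{n-m}$ lying over $h$, inducing $\gamma\colon E\to S^{n-m}\rtimes\Omega S^{m}$ on fibres. Composing with the quotient $q$ to $S^{n-m}\wedge\Omega S^{m}$ and using the James splitting $S^{n-m}\wedge\Omega S^{m}\simeq S^{n-1}\rtimes\Omega S^{m}$, one checks that $q\circ\gamma\circ\theta$ is a homotopy equivalence; the nontriviality of the cup product $x\cup y$ is precisely what drives this. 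That composite is the required left homotopy inverse for $\theta$.
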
 

One step in the proof of Theorem~\ref{Tinert} was to find a left homotopy inverse of $\theta$. 
In this paper we wish to be more careful about the choice of such a left homotopy inverse. So 
to go further, the argument proving Theorem~\ref{Tinert} is briefly summarized. Before proceeding, we first prove a lemma regarding the ring structure of $H^*(M)$.

\begin{lemma}
\label{lem:distinctgens}
    Let $M$ be an $(m-1)$-connected, $n$-dimensional Poincar\'e Duality complex where \mbox{$2 \leq m <n$}. Suppose there is a map $s\colon S^m \rightarrow M$ with a left homotopy inverse $h\colon M \rightarrow S^m$. If $\iota\in H^{m}(S^{m})$ is a generator, let $x=(h')^{\ast}(\iota)\in H^{m}(M)$. Then $x$ generates a primitive $\mathbb{Z}$-summand, $x^{2}=0$, and there is a $\mathbb{Z}$-generator $y \in H^{n-m}(M)$ with $x \neq y$ such that $x \cup y$ generates $H^n(M)$. Further, if $m=n-m$ then $y$ can be chosen to be in $H^{m}(M)\backslash\mathbb{Z}\{x\}$.
\end{lemma}
\begin{proof}
The primitivity of $x$ follows since $h\circ s$ is homotopic to the identity map on $S^{m}$. Since $(h')^{\ast}$ is an algebra map 
and $\iota^{2}=0$, it follows that $x^{2}=0$. Poincar\'e duality therefore implies that there exists $y \in H^{n-m}(M)$ with $x \neq y$ such that $x \cup y$ generates $H^n(M)$. If $m=n-m$ then possibly $y=x+z$ for some $z\in H^{m}(M)\backslash\mathbb{Z}\{x\}$. But then as $x^{2}=0$ we obtain $x\cup y=x\cup z$. Note that $z$ must also generate a $\mathbb{Z}$-summand; otherwise $z$ is rationally trivial, implying that $x\cup z$ is rationally trivial, a contradiction. Thus we may take $y$ to be $z$.
\end{proof}

Since $M$ is $(m-1)$-connected, by Poincar\'{e} Duality, $H^{k}(M)\cong 0$ for $n-m<k<n$. The 
universal coefficient theorem then implies that $H_{k}(M)\cong 0$ for $n-m<k<n$. Therefore 
the $(n-1)$-skeleton of $\overline{M}$ has dimension at most $n-m$. By assumption, there 
is a map 
\(\namedright{S^{m}}{s'}{M}\) 
with a left homotopy inverse 
\(\namedright{M}{h'}{S^{m}}\). 
By Lemma~\ref{lem:distinctgens}, $x=(h')^{\ast}(\iota)\in H^{m}(M)$ generates a primitive $\mathbb{Z}$-summand and there is a $\mathbb{Z}$-generator $y\in H^{n-m}(M)$ such that $x\neq y$ and $x\cup y$ generates $H^{n}(M)$. The universal coefficient theorem implies that $y$ dualizes to a $\mathbb{Z}$-summand in $H_{n-m}(M)$. Thus $\overline{M}$ is precisely $(n-m)$-dimensional. 
Let $\overline{\overline{M}}$ be the $(n-m-1)$-skeleton of $\overline{M}$. Then there is a homotopy cofibration 
\[\nameddright{\bigvee_{i=1}^{d} S^{n-m-1}}{}{\overline{\overline{M}}}{}{\overline{M}}\]  
that attaches the $(n-m)$-cells to $\overline{M}$. Note that $d\geq 1$. 

In general, any homotopy cofibration 
\(\nameddright{X}{}{Y}{}{Z}\) 
has a connecting map 
\(\delta\colon\namedright{Z}{}{\Sigma X}\) 
and a homotopy coaction 
\(\psi\colon\namedright{Z}{}{Z\vee\Sigma X}\) 
with the property that $\psi$ composed with the pinch map to $Z$ is homotopic to the identity map and 
$\psi$ composed with the pinch map to $\Sigma X$ is homotopic to $\delta$. In our case, we obtain 
a connecting map 
\[\namedright{\overline{M}}{\delta}{\bigvee_{i=1}^{d} S^{n-m}}\] 
and a homotopy coaction 
\[\namedright{\overline{M}}{\psi}{\overline{M}\vee\bigvee_{i=1}^{d} S^{n-m}}.\]  
The generator $y\in H^{n-m}(M)$ may also be regarded as a generator in $H^{n-m}(\overline{M})$, 
and for dimensional reasons it is in the image of $\delta^{\ast}$. Let 
\[p\colon\namedright{\bigvee_{i=1}^{d} S^{n-m}}{}{S^{n-m}}\]  
be the pinch map to the $i=1$ summand. Changing $\bigvee_{i=1}^{d} S^{n-m}$ 
by a self-equivalence if necessary, we may assume that the composite 
\[p'\colon\nameddright{\overline{M}}{\delta}{\bigvee_{i=1}^{d} S^{n-m}}{p}{S^{n-m}}\] 
has image $y$ in cohomology. Let $\psi'$ be the composite 
\[\psi'\colon\llnameddright{\overline{M}}{\psi}{\overline{M}\vee\bigvee_{i=1}^{d} S^{n-m}}{h\vee p} 
      {S^{m}\vee S^{n-m}}.\] 
As $\psi$ is a comultiplication, $\psi'$ composed with the pinch map to $S^{m}$ is homotopic to $h$ and 
$\psi'$ composed with the pinch map to $S^{n-m}$ is homotopic to $p'$. Thus $(\psi')^{\ast}$ sends the 
generator of $H^{m}(S^{m})$ to $x$ and the generator of $H^{n-m}(S^{n-m})$ to $y$. 

Generically, let 
\(\namedright{X\vee Y}{p_{1}}{X}\) 
be the pinch map to the left wedge summand. The naturality of~$p_{1}$  implies that 
$p_{1}\circ\psi'=p_{1}\circ(h\vee p)\circ\psi\simeq h\circ p_{1}\circ\psi$. Since $\psi$ is 
a homotopy coaction, $p_{1}\circ\psi$ is homotopic to the identity map on $\overline{M}$. 
Thus $p_{1}\circ\psi'\simeq h$. This homotopy results in a homotopy fibration diagram 
\begin{equation} 
  \label{psipb} 
  \diagram 
        E\rto^-{\gamma}\dto & S^{n-m}\rtimes\Omega S^{m}\dto \\ 
        \overline{M}\rto^-{\psi'}\dto^{h} & S^{m}\vee S^{n-m}\dto^{p_{1}} \\ 
        S^{m}\rdouble & S^{m} 
  \enddiagram 
\end{equation} 
that defines the map $\gamma$. Let 
\[q\colon\namedright{S^{n-m}\rtimes\Omega S^{m}}{}{S^{n-m}\wedge\Omega S^{m}}\] 
be the standard quotient map from the half-smash to the smash product. By the 
James construction~\cite{J}, there is a homotopy equivalence 
$\Sigma\Omega S^{m}\simeq\bigvee_{k=1}^{\infty}\Sigma S^{k(m-1)}$. Thus freely moving 
suspension coordinates gives homotopy equivalences 
\[\begin{split} 
    S^{n-m}\wedge\Omega S^{m} &  
       \simeq S^{n-m}\wedge(\bigvee_{k=1}^{\infty} S^{k(m-1)}) \\ 
    & \simeq (S^{n-m}\wedge S^{m-1})\vee(S^{n-m}\wedge(\bigvee_{k=2}^{\infty} S^{k(m-1)})) \\ 
    & \simeq (S^{n-m}\wedge S^{m-1})\vee (S^{n-m}\wedge S^{m-1}\wedge(\bigvee_{k=1}^{\infty} S^{k(m-1)})) \\
    & \simeq S^{n-1}\vee(S^{n-1}\wedge\Omega S^{m}) \\ 
    & \simeq S^{n-1}\rtimes\Omega S^{m}. 
 \end{split}\] 
In~\cite{T2} it was shown that the composite 
\begin{equation} 
  \label{thetainv} 
  \namedddright{S^{n-1}\rtimes\Omega S^{m}}{\theta}{E}{\gamma}{S^{n-m}\rtimes\Omega S^{m}}{q}{S^{n-m}\wedge\Omega S^{m}\simeq S^{n-1}\rtimes\Omega S^{m}} 
\end{equation} 
is a homotopy equivalence. 

By Lemma~\ref{Etype}, there is a homotopy equivalence $E\simeq A\rtimes\Omega S^{m}$. The 
goal is to show that $\gamma$ is well-behaved with respect to this homotopy equivalence in 
order to identify the homotopy type of the cofibre $E'$ of $\theta$. The first step is to determine to what extent the homotopy class of $\gamma$ is determined by the homotopy pullback~(\ref{psipb}). 

\begin{lemma} 
   \label{gammaclass} 
   The homotopy class of the map $\gamma$ in~(\ref{psipb}) is uniquely determined by it making the top square in~(\ref{psipb}) homotopy commute. 
\end{lemma}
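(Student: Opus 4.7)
The plan is to prove the lemma by showing that any two maps $\gamma_{1},\gamma_{2}\colon E\to S^{n-m}\rtimes\Omega S^{m}$ both making the top square of~(\ref{psipb}) homotopy commute are themselves homotopic. Since both project under $\pi$ to the common class $\psi'\circ(E\to\overline{M})$, this reduces to proving that the induced map
\[\pi_{\ast}\colon [E,\, S^{n-m}\rtimes\Omega S^{m}]\longrightarrow [E,\, S^{m}\vee S^{n-m}]\]
is injective.

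To establish this, apply $[E,-]$ to the Puppe sequence of the Ganea fibration
\(\nameddright{S^{n-m}\rtimes\Omega S^{m}}{\pi}{S^{m}\vee S^{n-m}}{p_{1}}{S^{m}}\)
of Lemma~\ref{Ganea}, yielding the exact sequence of pointed sets
\[[E,\Omega(S^{m}\vee S^{n-m})]\xrightarrow{(\Omega p_{1})_{\ast}}[E,\Omega S^{m}]\xrightarrow{\partial_{\ast}}[E,S^{n-m}\rtimes\Omega S^{m}]\xrightarrow{\pi_{\ast}}[E,S^{m}\vee S^{n-m}].\]
The wedge inclusion $s_{1}\colon S^{m}\hookrightarrow S^{m}\vee S^{n-m}$ is a strict section of the pinch map $p_{1}$, so $\Omega s_{1}$ is a strict section of $\Omega p_{1}$; hence $(\Omega p_{1})_{\ast}$ is split surjective. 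By exactness of the Puppe sequence, the connecting map $\partial_{\ast}$ is constant at the basepoint.

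The fibres of $\pi_{\ast}$ are identified with the orbits of the natural action of $[E,\Omega S^{m}]$ on $[E,S^{n-m}\rtimes\Omega S^{m}]$ via $\partial_{\ast}$, arising from the $\Omega S^{m}$-action on the second coordinate of the half-smash; triviality of $\partial_{\ast}$ then forces every orbit to be a singleton, giving the required injectivity of $\pi_{\ast}$. The main obstacle is this last step---deducing that every fibre of $\pi_{\ast}$, not only the basepoint fibre, is a singleton---and this uses the principal $\Omega S^{m}$-fibration structure on $\pi$ coming from its description as the pullback of the path--loop fibration $\Omega S^{m}\to PS^{m}\to S^{m}$ along $p_{1}$. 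If this principal-bundle interpretation proves delicate, an alternative is to exploit Lemma~\ref{Etype} and the resulting splitting $E\simeq A\rtimes\Omega S^{m}\simeq A\vee(A\wedge\Omega S^{m})$ to verify injectivity component-by-component from a wedge decomposition of the target.
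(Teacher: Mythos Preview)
Your overall strategy—showing that $\pi_{\ast}\colon[E,S^{n-m}\rtimes\Omega S^{m}]\to[E,S^{m}\vee S^{n-m}]$ is injective using the Puppe sequence and the null-homotopy of the connecting map $\partial$—is exactly the idea the paper uses. But the step you flag as ``the main obstacle'' is a genuine gap, and the fix is not the principal-bundle reasoning you sketch.

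For an arbitrary domain $X$, the fibres of $\pi_{\ast}$ are the $[X,\Omega S^{m}]$-orbits under the holonomy action $\mu\colon\Omega S^{m}\times(S^{n-m}\rtimes\Omega S^{m})\to S^{n-m}\rtimes\Omega S^{m}$. The connecting map $\partial$ is only the action on the basepoint, $\partial(\omega)=\mu(\omega,\ast)$, so $\partial\simeq\ast$ tells you the orbit of the basepoint is trivial and nothing more. Indeed, $\mu$ is multiplication on the second coordinate and is \emph{not} homotopic to the projection: in homology $\mu_{\ast}(a\otimes(y\otimes b))=y\otimes(a\cdot b)$, which disagrees with $(\mathrm{pr}_{2})_{\ast}$ whenever $a$ has positive degree. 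So ``$\partial_{\ast}$ trivial $\Rightarrow$ every orbit is a singleton'' fails in general.

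What rescues the argument is precisely that $E$ is a co-$H$-space, which the paper establishes first (via $E\simeq A\rtimes\Omega S^{m}$ with $A$ co-$H$). Once $E$ is co-$H$, the sets $[E,-]$ are groups, the maps in the Puppe sequence are homomorphisms, and one can form the difference $\gamma-\gamma'$; then $\pi\circ(\gamma-\gamma')\simeq\ast$ forces $\gamma-\gamma'$ to lift through $\partial$, hence to be null. Equivalently, with $E$ co-$H$ one checks that the holonomy action becomes $(\alpha,\beta)\mapsto\partial_{\ast}(\alpha)+\beta$, and now $\partial\simeq\ast$ really does make every orbit a singleton. Your fallback—invoking Lemma~\ref{Etype} to split $E$—is in fact the essential ingredient, not an alternative route: you should use it to obtain the co-$H$-structure on $E$ and then run the difference argument directly.
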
 

\begin{proof} 
We first show that $E$ is a co-$H$-space. By hypothesis, 
$\overline{M}$ is a co-$H$-space. By Lemma~\ref{Mbarsplit}, 
$\overline{M}\simeq S^{m}\vee A$, so as $A$ retracts off a 
co-$H$-space it is itself a co-$H$-space. In general, if $B$ is a co-$H$-space and $C$ is any space then $B\rtimes C$ is a co-$H$-space. Therefore the homotopy equvalence 
$E\simeq A\rtimes\Omega S^{m}$ in Lemma~\ref{Etype} implies 
that $E$ is a co-$H$-space. 

Now consider the homotopy fibration sequence 
\[\namedddright{\Omega S^{m}}{\delta}{S^{n-m}\rtimes\Omega S^{m}}{r}{S^{m}\vee S^{n-m}}{p_{1}}{S^{m}}\] 
where $r$ is a name for the map from the homotopy fibre to the total space and $\delta$ is the fibration connecting map. Since $p_{1}$ has a right homotopy inverse, $\delta$ is null homotopic.
Suppose that there is another map 
\(\namedright{E}{\gamma'}{S^{n-m}\rtimes\Omega S^{m}}\) 
that makes the top square in~(\ref{psipb}) homotopy commute. As $E$ is a co-$H$-space, we can consider the difference 
\(\lnamedright{E}{\gamma-\gamma'}{S^{n-m}\rtimes\Omega S^{m}}\). 
As both $\gamma$ and $\gamma'$ make the top square in~(\ref{psipb}) homotopy commute, the composite 
\[\lnameddright{E}{\gamma-\gamma'}{S^{n-m}\rtimes\Omega S^{m}}{r}{S^{m}\vee S^{n-m}}\] 
is null homotopic. Therefore $\gamma-\gamma'$ lifts to the homotopy fibre of $r$, meaning it lifts through $\delta$. But~$\delta$ is null homotopic, implying that $\gamma\simeq\gamma'$. 
\end{proof}

The next step is to reconcile the map 
\(\namedright{\overline{M}}{\psi'}{S^{m}\vee S^{n-m}}\) 
used to define~$\gamma$ in~(\ref{psipb}) with the comultiplication on $\overline{M}$ used 
to produce the homotopy equivalence in Lemma~\ref{Etype}. In general, if 
\mbox{\(\namedddright{X}{}{Y}{}{Z}{\delta}{\Sigma X}\)} 
is a homotopy cofibration sequence and $Z$ is a co-$H$-space with comultiplication~$\sigma$, 
then the associated homotopy coaction 
\(\namedright{Z}{\psi}{Z\vee\Sigma A}\) 
need not be homotopic to the composite 
\(\nameddright{Z}{\sigma}{Z\vee Z}{1\vee\delta}{Z\vee\Sigma X}\). 
The following lemma overcomes this in our case. 

\begin{lemma} 
   \label{comatch} 
   Let $\overline{M}$ be a co-$H$-space with comultiplication $\sigma$. Then the map 
   \(\namedright{\overline{M}}{\psi'}{S^{m}\vee  S^{n-m}}\) 
   is homotopic to the composite  
   \(\nameddright{\overline{M}}{\sigma}{\overline{M}\vee\overline{M}}{h\vee p'}{S^{m}\vee S^{n-m}}\). 
\end{lemma}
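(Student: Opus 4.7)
The plan is to show $\psi'$ and $(h\vee p')\circ\sigma$ induce the same map $\overline{M}\to S^{m}\vee S^{n-m}$ by first verifying that their components under the two pinch maps of $S^{m}\vee S^{n-m}$ agree, and then invoking a connectivity-dimension argument to upgrade component agreement to a homotopy.

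I first compute the components. Write $p_{1}\colon S^{m}\vee S^{n-m}\to S^{m}$ and $p_{2}\colon S^{m}\vee S^{n-m}\to S^{n-m}$ for the pinch maps. Because $\psi$ is a homotopy coaction, the pinch $\overline{M}\vee\bigvee S^{n-m}\to\overline{M}$ composed with $\psi$ is homotopic to $1_{\overline{M}}$, and the pinch $\overline{M}\vee\bigvee S^{n-m}\to\bigvee S^{n-m}$ composed with $\psi$ is homotopic to $\delta$. By naturality of the pinch maps, and using $\psi'=(h\vee p)\circ\psi$ and $p'=p\circ\delta$, this gives $p_{1}\circ\psi'\simeq h$ and $p_{2}\circ\psi'\simeq p'$. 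Since $\sigma$ is a comultiplication on $\overline{M}$, both pinches $\overline{M}\vee\overline{M}\to\overline{M}$ become $1_{\overline{M}}$ after composing with $\sigma$, so identically $p_{1}\circ(h\vee p')\circ\sigma\simeq h$ and $p_{2}\circ(h\vee p')\circ\sigma\simeq p'$. The two maps therefore have identical components.

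Next, I consider the classical homotopy fibration
\[\nameddright{\Omega S^{m}\ast\Omega S^{n-m}}{}{S^{m}\vee S^{n-m}}{}{S^{m}\times S^{n-m}}\]
associated to the inclusion of the wedge into the product. Since $\Omega S^{m}\ast\Omega S^{n-m}\simeq\Sigma(\Omega S^{m}\wedge\Omega S^{n-m})$, and $\Omega S^{m}$ and $\Omega S^{n-m}$ are $(m-2)$- and $(n-m-2)$-connected respectively, the fiber is $(n-2)$-connected, so the inclusion $S^{m}\vee S^{n-m}\hookrightarrow S^{m}\times S^{n-m}$ is $(n-1)$-connected. As noted earlier in this section, $\overline{M}$ has dimension exactly $n-m$, and the standing assumption $m\geq 2$ gives $n-m\leq n-2$. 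Whitehead's theorem for $(n-1)$-connected maps therefore implies that
\[[\overline{M},\,S^{m}\vee S^{n-m}]\longrightarrow[\overline{M},\,S^{m}\times S^{n-m}]\cong[\overline{M},S^{m}]\times[\overline{M},S^{n-m}]\]
is a bijection. Since $\psi'$ and $(h\vee p')\circ\sigma$ have equal components, they map to the same element on the right-hand side and hence are homotopic.

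The key obstacle is the final bijection, which rests on the $(n-2)$-connectivity of the fiber of the Ganea-type fibration combined with the dimension bound $\dim\overline{M}=n-m\leq n-2$. I expect the stronger inequality $m<n-m$ appearing in Theorem~\ref{main} to be unnecessary for this particular lemma: only $m\geq 2$ is used, consistent with the appeal to $p_{1}\circ\sigma\simeq 1_{\overline{M}}$ requiring nothing beyond the co-$H$ structure.
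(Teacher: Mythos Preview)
Your proof is correct and follows essentially the same strategy as the paper: both verify that $\psi'$ and $(h\vee p')\circ\sigma$ agree after composing with the inclusion $j\colon S^{m}\vee S^{n-m}\to S^{m}\times S^{n-m}$, and then use that the fibre $\Sigma\Omega S^{m}\wedge\Omega S^{n-m}$ of $j$ is $(n-2)$-connected together with $\dim\overline{M}=n-m\leq n-2$. The only cosmetic difference is that the paper phrases the final step by forming the difference $D=\psi'-(h\vee p')\circ\sigma$ and lifting it (null homotopically) to the fibre, whereas you invoke the equivalent statement that $j_{\ast}$ is a bijection on $[\overline{M},-]$.
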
 

\begin{proof} 
In general, let  
\(j\colon\namedright{X\vee Y}{}{X\times Y}\)  
be the inclusion of the wedge into the product. By definition, $\psi'$ is the composite 
\(\nameddright{\overline{M}}{\psi}{\overline{M}\vee(\bigvee_{i=1}^{d} S^{n-m})}{h\vee p}{S^{m}\vee S^{n-m}}\) 
and it was noted that $\psi'$ composed with the pinch map to $S^{m}$ is homotopic to $h$ while $\psi'$ 
composed  with the pinch map to $S^{n-m}$ is homotopic to $p'$.  Thus the composite 
\[\nameddright{\overline{M}}{\psi'}{S^{m}\vee S^{n-m}}{j}{S^{n}\times S^{n-m}}\] 
is the product map $h\times p'$. On the other hand, since $\sigma$ is a comultiplication 
it is a lift of the diagonal map 
\(\namedright{\overline{M}}{\Delta}{\overline{M}\times\overline{M}}\). 
The naturality of $j$ then implies that the composite 
\[\namedddright{\overline{M}}{\sigma}{\overline{M}\vee\overline{M}}{h\vee p'}{S^{m}\vee S^{n-m}}{j} 
     {S^{m}\times S^{n-m}}\] 
is homotopic to $h\times p'$. Thus if 
\[D\colon\namedright{\overline{M}}{}{S^{m}\vee S^{n-m}}\] 
is the difference $D=\psi'-(h\vee p')\circ\sigma$ then $j\circ D$ is null homotopic. By~\cite{G}, 
the homotopy fibre of $j$ is homotopy equivalent to $\Sigma\Omega S^{m}\wedge\Omega S^{n-m}$. 
Thus we obtain a lift 
\[\diagram 
     & \Sigma\Omega S^{m}\wedge\Omega S^{n-m}\dto \\ 
     \overline{M}\rto^-{D}\urto^-{\lambda} & S^{m}\vee S^{n-m} 
  \enddiagram\] 
for some map $\lambda$. Observe that $\overline{M}$ is $(n-m)$-dimensional while 
$\Sigma\Omega S^{m}\wedge\Omega S^{n-m}$ is $(n-2)$-connected. Since spaces 
are simply-connected we have $m\geq 2$, implying that $n-m\leq n-2$. Therefore $\lambda$ is null homotopic, 
implying that $D$ is null homotopic. Hence $\psi'\simeq(h\vee p')\circ\sigma$, as asserted. 
\end{proof} 

Next, recall from~(\ref{Mbarcofib}) that there is a homotopy cofibration  
\(\nameddright{S^{m}}{s}{\overline{M}}{a}{A}\). 
If $m < n-m$, the composite 
\(\nameddright{S^{m}}{s}{\overline{M}}{p'}{S^{n-m}}\) 
is null homotopic for dimension and connectivity reasons. If $m = n-m$, recall that the generators $x \in H^m(M)$ and $y \in H^{n-m}(M)$ have been chosen using Lemma~\ref{lem:distinctgens}, so it may be assumed that $y\in H^{m}(M)\backslash\mathbb{Z}\{x\}$. Therefore, \(\nameddright{S^{m}}{s}{\overline{M}}{p'}{S^{m}}\) 
is null homotopic as it has trivial image in cohomology by definition of $s$ and $p'$. 
Therefore $p'$ extends across $a$ to give the following. 

\begin{lemma} 
   \label{Afactor} 
   The map 
   \(\namedright{\overline{M}}{p'}{S^{n-m}}\) 
   factors as a composite 
   \(\nameddright{\overline{M}}{a}{A}{p''}{S^{n-m}}\) 
   for some map~$p''$.~$\qqed$ 
\end{lemma}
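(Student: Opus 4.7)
The plan is to invoke the universal property of the homotopy cofibration
\(\nameddright{S^{m}}{s}{\overline{M}}{a}{A}\)
from~(\ref{Mbarcofib}). Recall that a map out of $\overline{M}$ extends across $a$ to a map out of $A$ if and only if its precomposition with $s$ is null homotopic. So the whole proof reduces to showing that the composite
\(\nameddright{S^{m}}{s}{\overline{M}}{p'}{S^{n-m}}\)
is null homotopic.

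The key observation is the hypothesis $m<n-m$, which gives $n-m-1\geq m$, so $S^{n-m}$ is $m$-connected and therefore $\pi_{m}(S^{n-m})=0$. Any map \(\namedright{S^{m}}{}{S^{n-m}}\) is null homotopic, so in particular $p'\circ s\simeq\ast$. This is the entire content of the connectivity/dimension assumption for this lemma — no subtler obstruction theory is needed.

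Once $p'\circ s$ is null homotopic, the cofibration sequence yields a map
\(\namedright{A}{p''}{S^{n-m}}\)
such that $p''\circ a\simeq p'$, which is exactly the factorization claimed. I expect no real obstacle here: the statement is essentially a dimension count, and the proof is a one-line application of the cofibration property. The factor $p''$ is not unique (it is only determined up to the indeterminacy coming from maps $\Sigma S^{m}\to S^{n-m}$), but the lemma only asserts existence, so this is immaterial for the present purpose.
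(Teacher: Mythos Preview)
Your argument is correct and is exactly the paper's own proof: the text immediately preceding the lemma notes that $p'\circ s$ is null homotopic for dimension and connectivity reasons (since $m<n-m$ gives $\pi_{m}(S^{n-m})=0$) and therefore $p'$ extends across $a$. The lemma is marked $\qqed$ for this reason, and your write-up simply makes the extension step via the cofibration property explicit.
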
 

Now things are put together to gain some control over the map $\gamma$ in~(\ref{psipb}). 

\begin{lemma} 
   \label{gammacontrol} 
   The map $\gamma$ in~(\ref{psipb}) factors as the composite 
   \(\lnameddright{E}{e'}{A\rtimes\Omega S^{m}}{p''\rtimes 1}{S^{n-m}\rtimes\Omega S^{m}}\), 
   where~$e'$ is the homotopy equivalence in Lemma~\ref{Etype}.  
\end{lemma}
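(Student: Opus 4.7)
The plan is to apply the uniqueness statement in Lemma~\ref{gammaclass}: the map $\gamma$ in~(\ref{psipb}) is determined up to homotopy by making the top square of~(\ref{psipb}) homotopy commute. So it suffices to verify that the composite $(p''\rtimes 1)\circ e'$ also makes this square commute, and invoke the lemma.

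First I would reconcile the two maps $\psi'$ and $e$ out of $\overline{M}$. By Lemma~\ref{comatch}, $\psi'\simeq (h\vee p')\circ\sigma$, where $\sigma$ is the chosen comultiplication on $\overline{M}$. Lemma~\ref{Afactor} supplies a factorization $p'\simeq p''\circ a$. Substituting and using the functoriality of the wedge,
\[\psi'\simeq (h\vee p'')\circ\sigma\circ\cdots\simeq (1_{S^{m}}\vee p'')\circ (h\vee a)\circ\sigma=(1_{S^{m}}\vee p'')\circ e,\]
where $e$ is the homotopy equivalence of Lemma~\ref{Mbarsplit} appearing in diagram~(\ref{e'pb}).

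Next I would exploit the naturality of Ganea's fibration. The wedge map $1_{S^{m}}\vee p''\colon S^{m}\vee A\to S^{m}\vee S^{n-m}$ is compatible with the pinch maps $p_{1}$ to $S^{m}$, so taking homotopy fibres over $S^{m}$ gives a map of homotopy fibrations whose map on fibres, by the explicit description in Lemma~\ref{Ganea}, is precisely $p''\rtimes 1\colon A\rtimes\Omega S^{m}\to S^{n-m}\rtimes\Omega S^{m}$. Stacking this naturality square on top of the homotopy pullback~(\ref{e'pb}) produces a homotopy pullback
\[\diagram
     E\rto^-{(p''\rtimes 1)\circ e'}\dto & S^{n-m}\rtimes\Omega S^{m}\dto \\
     \overline{M}\rto^-{(1_{S^{m}}\vee p'')\circ e}\dto^{h} & S^{m}\vee S^{n-m}\dto^{p_{1}} \\
     S^{m}\rdouble & S^{m}
  \enddiagram\]
where the induced map of fibres is $(p''\rtimes 1)\circ e'$, using Lemma~\ref{Etype} to identify the fibre of $h$.

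Since $(1_{S^{m}}\vee p'')\circ e\simeq\psi'$ by the first step, this diagram has the same base square as~(\ref{psipb}). In particular, the top square commutes with $(p''\rtimes 1)\circ e'$ in place of $\gamma$. Applying Lemma~\ref{gammaclass} then gives $\gamma\simeq (p''\rtimes 1)\circ e'$, as required. The only subtlety I foresee is bookkeeping: making sure the homotopy $\psi'\simeq(1_{S^{m}}\vee p'')\circ e$ is the one that fits into the naturality of fibre sequences cleanly, but this is a standard manipulation once Lemmas~\ref{comatch} and~\ref{Afactor} are in hand.
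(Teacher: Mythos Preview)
Your proposal is correct and follows essentially the same approach as the paper: first use Lemmas~\ref{comatch} and~\ref{Afactor} to identify $\psi'\simeq(1\vee p'')\circ e$, then stack the naturality square for Ganea's fibration (Lemma~\ref{Ganea}) on top of~(\ref{e'pb}), and finally invoke the uniqueness in Lemma~\ref{gammaclass} to conclude $\gamma\simeq(p''\rtimes 1)\circ e'$. The paper presents exactly this argument, displayed as a single three-column homotopy fibration diagram rather than a stacked pullback, but the content is identical.
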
 

\begin{proof} 
First consider the diagram 
\[\diagram 
     \overline{M}\rto^-{\sigma}\dto^{\psi'} & \overline{M}\vee\overline{M}\rto^-{h\vee a}\dto^{h\vee p'} 
         & S^{m}\vee A\dto^{1\vee p''} \\ 
     S^{m}\vee S^{n-m}\rdouble & S^{m}\vee S^{n-m}\rdouble & S^{m}\vee S^{n-m}. 
  \enddiagram\] 
The left square homotopy commutes by Lemma~\ref{comatch} and the right square homotopy 
commutes by Lemma~\ref{Afactor}. The top row is the definition of the homotopy equivalence $e$ 
in Lemma~\ref{Mbarsplit}. Thus $\psi'\simeq (1\vee p'')\circ e$. This homotopy will let us factor the homotopy pullback defining $\gamma$ in~(\ref{psipb}). Consider 
the diagram
\[\begin{tikzcd}
	E & {A \rtimes \Omega S^m} & {S^{n-m} \rtimes \Omega S^m} \\
	{\overline{M}} & {S^m \vee A} & {S^m \vee S^{n-m}} \\
	{S^m} & {S^m} & {S^m}
	\arrow["{e'}", from=1-1, to=1-2]
	\arrow[from=1-1, to=2-1]
	\arrow["{p'' \rtimes 1}", from=1-2, to=1-3]
	\arrow[from=1-2, to=2-2]
	\arrow[from=1-3, to=2-3]
	\arrow["e", from=2-1, to=2-2]
	\arrow["h", from=2-1, to=3-1]
	\arrow["{1 \vee p''}", from=2-2, to=2-3]
	\arrow["{p_1}", from=2-2, to=3-2]
	\arrow["{p_1}", from=2-3, to=3-3]
	\arrow[Rightarrow, no head, from=3-1, to=3-2]
	\arrow[Rightarrow, no head, from=3-2, to=3-3]
\end{tikzcd}\] 
where the columns are homotopy fibrations. The map of homotopy fibrations 
between the left and middle columns is~(\ref{e'pb}) 
while the map of homotopy fibrations between the middle and right columns is due to the naturality 
of Lemma~\ref{Ganea}. Since the middle row is homotopic to $\psi'$, Lemma~\ref{gammaclass} implies that the top row is homotopic to $\gamma$, proving the lemma. 
\end{proof} 

One more step is needed. Consider the composite 
\[\lnameddright{A\rtimes\Omega S^{m}}{p''\rtimes 1}{S^{n-m}\rtimes\Omega S^{m}}{q} 
     {S^{n-m}\wedge\Omega S^{m}}.\] 
We will identify $q\circ(p''\rtimes 1)$ as being induced by a homotopy cofibration. To do so, we first identify~$p''$ as being induced by a homotopy cofibration. 

\begin{lemma} 
   \label{p''cofib} 
   There is a space $B$ and a map 
   $b:B \rightarrow A$ which induces 
    a homotopy cofibration 
   \(\nameddright{B}{b}{A}{p''}{S^{n-m}}\). 
\end{lemma}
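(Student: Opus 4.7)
The plan is to realise $B$ as a quotient of a subcomplex of $\overline{M}$. Since $m < n-m$ forces $m \leq n-m-1$, cellular approximation lets me homotope $s\colon S^{m} \to \overline{M}$ into the $(n-m-1)$-skeleton $\overline{\overline{M}}$, so I may assume $s$ factors as $s_{0}\colon S^{m} \to \overline{\overline{M}}$ followed by the inclusion $\overline{\overline{M}} \hookrightarrow \overline{M}$.

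Recall that, after the self-equivalence of $\bigvee_{i=1}^{d} S^{n-m}$ fixed in the paragraph before Lemma~\ref{Afactor}, the pinch $p$ onto the first wedge summand selects $y$, so that $p' = p \circ \delta\colon \overline{M} \to S^{n-m}$. Let $C \subset \overline{M}$ be the subcomplex obtained from $\overline{\overline{M}}$ by attaching only the $(n-m)$-cells indexed by $i = 2,\dots,d$. Then $C \hookrightarrow \overline{M}$ is a cofibration whose quotient is the remaining $(n-m)$-cell, and the quotient map $\overline{M} \to \overline{M}/C \simeq S^{n-m}$ is precisely $p'$.

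Since $s$ factors through $\overline{\overline{M}} \subset C$, there is a lift $s_{C}\colon S^{m} \to C$. Define $B$ to be the homotopy cofibre of $s_{C}$ and let $b\colon B \to A$ be the map induced on cofibres by the commutative square $s = (C \hookrightarrow \overline{M}) \circ s_{C}$. Because taking the quotient by $S^{m}$ commutes with the inclusion of $C$ into $\overline{M}$, the homotopy cofibre of $b$ is $\overline{M}/C \simeq S^{n-m}$, producing a homotopy cofibration $\nameddright{B}{b}{A}{q}{S^{n-m}}$ where $q$ is the induced quotient map.

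The remaining step is to identify $q$ with $p''$. By construction $q \circ a$ agrees with the composite $\overline{M} \to \overline{M}/C \simeq S^{n-m}$, which equals $p'$. Since $a$ is a cofibration and $p' \circ s \simeq *$ (this being the very null homotopy used to define $p''$ in Lemma~\ref{Afactor}), factorisations of $p'$ through $a$ are unique up to homotopy, forcing $q \simeq p''$. The main subtlety is setting up the CW-structure so that the distinguished $(n-m)$-cell aligns with the pinch selecting $y$; once this normalisation is in place the rest is the elementary fact that cofibres commute with quotients by a common subcomplex.
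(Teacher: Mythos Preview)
Your approach is genuinely different from the paper's. The paper works with the homotopy \emph{fibre} $F$ of $p''$: it uses the Serre exact sequence for $\Omega S^{n-m}\to F\to A$, together with $H_{n-m}(\Omega S^{n-m})\cong 0$ (valid because $n-m>m\geq 2$), to conclude that the $(n-m)$-skeleton $B=F_{n-m}$ gives the required cofibration $B\to A\xrightarrow{p''} S^{n-m}$. Your CW-level construction via the subcomplex $C$ and the $3\times 3$ cofibre diagram is more hands-on and avoids fibrations entirely; it also makes $B$ explicitly a quotient of a subcomplex of $\overline{M}$, which is pleasant.

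There is, however, a gap in your last step. The claim that ``factorisations of $p'$ through $a$ are unique up to homotopy'' does not follow from ``$a$ is a cofibration and $p'\circ s\simeq *$''; that gives only existence. Two extensions of $p'$ along $a$ differ by a composite $A\xrightarrow{\partial} S^{m+1}\to S^{n-m}$, where $\partial$ is the connecting map of the cofibre sequence $S^{m}\xrightarrow{s}\overline{M}\xrightarrow{a}A$. When $n=2m+1$ one has $\pi_{m+1}(S^{n-m})\cong\mathbb{Z}$, so uniqueness is not automatic. The fix is to use that $s$ has a left homotopy inverse $h$: then $\Sigma s$ is split by $\Sigma h$, and since $(\Sigma s)\circ\partial\simeq *$ in the Puppe sequence, $\partial\simeq(\Sigma h)\circ(\Sigma s)\circ\partial\simeq *$. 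Hence the extension is indeed unique and $q\simeq p''$. With this correction your argument is complete; the paper's fibre-sequence proof sidesteps the issue because it works directly with the given $p''$ rather than constructing a second candidate.
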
 

\begin{proof}  
If $m=n-m$, suppose $H_m(M)$ has rank $d$. Then $\overline{M} \simeq \bigvee_{i=1}^d S^m$, and by relabelling the wedge summands if necessary, the map $s:S^m \rightarrow \bigvee_{i=1}^d S^m$ can be taken to be the inclusion of the first wedge summand. Hence, $A \simeq \bigvee_{i=2}^d S^m$, and the map $p''$ can be chosen to be the pinch map onto the second wedge summand. Therefore, defining $B = \bigvee_{i=3}^d S^m$ and $b:\bigvee_{i=3}^d S^m \rightarrow \bigvee_{i=2}^d S^m$ as the inclusion, we obtain the asserted homotopy cofibration. 

Now suppose that $m < n-m$. By definition, the map 
\(\namedright{A}{p''}{S^{n-m}}\) 
factors the map 
\(\namedright{\overline{M}}{p'}{S^{n-m}}\), 
which induces an epimorphism in homology. Thus $p''$ also induces an epimorphism 
in homology. Define the space $F$ by the homotopy fibration 
\[\nameddright{F}{}{A}{p''}{S^{n-m}}.\] 
Taking the connecting map gives a homotopy fibration 
\(\nameddright{\Omega S^{n-m}}{}{F}{}{A}\). 
Since $A$ is $(m-1)$-connected for $m\geq 2$ and $\Omega S^{n-m}$ is $(n-m-2)$-connected, 
the Serre exact sequence implies that this homotopy fibration induces a long exact sequence 
in homology 
\begin{equation} 
  \label{Serreexact} 
  \namedddright{H_{n-m}(\Omega S^{n-m})}{}{H_{n-m}(F)}{}{H_{n-m}(A)}{} 
    {H_{n-m-1}(\Omega S^{n-m})}\longrightarrow\cdots 
 \end{equation} 
Notice that $H_{n-m}(\Omega S^{n-m})\cong 0$ unless $n-m=2$. In our case, 
$n-m> m\geq 2$, so \mbox{$H_{n-m}(\Omega S^{n-m})\cong 0$}. Thus if 
$F_{n-m}$ is the $(n-m)$-skeleton of $F$ then the exactness of~(\ref{Serreexact}) 
implies that there is a homotopy cofibration 
\(\nameddright{F_{n-m}}{}{A}{p''}{S^{n-m}}\). 
Taking $B=F_{n-m}$ and 
\(b\colon\namedright{B}{}{A}\) 
as
\(\namedright{F_{n-m}}{}{A}\), 
we obtain the assertion in the statement of the lemma.
\end{proof} 

Let 
\[i\colon\namedright{A}{}{A\rtimes\Omega S^{m}}\] 
be the inclusion into the first factor. Let $j$ be the composite 
\[j\colon\namedright{B\wedge\Omega S^{m}}{b\wedge 1}{A\wedge\Omega S^{m}} 
       \hookrightarrow A\rtimes\Omega S^{m}.\] 
Let 
\[i\perp j\colon\namedright{A\vee(B\wedge\Omega S^{m})}{}{A\rtimes\Omega S^{m}}\] 
be the wedge sum of $i$ and $j$.

\begin{lemma} 
   \label{qcofib} 
   There is a homotopy cofibration 
   \[\llnameddright{A\vee (B\wedge\Omega S^{m})}{i\perp j}{A\rtimes\Omega S^{m}} 
        {q\circ(p''\rtimes 1)}{S^{m-n}\wedge\Omega S^{m}}.\] 
\end{lemma}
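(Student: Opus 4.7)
The strategy is to exploit the fact, recorded in the proof of Lemma~\ref{gammaclass}, that $A$ is a co-$H$-space (being a retract of the co-$H$-space $\overline{M}$), so that there is a homotopy equivalence $A\rtimes\Omega S^{m}\simeq A\vee(A\wedge\Omega S^{m})$. Under this equivalence, $i\colon A\to A\rtimes\Omega S^{m}$ is the inclusion of the first wedge summand, the quotient $q\colon A\rtimes\Omega S^{m}\to A\wedge\Omega S^{m}$ is the projection onto the second wedge summand, and the inclusion $A\wedge\Omega S^{m}\hookrightarrow A\rtimes\Omega S^{m}$ appearing in the definition of $j$ is the inclusion of the second wedge summand. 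Thus $i\perp j$ is identified with $1_{A}\vee(b\wedge 1)\colon A\vee(B\wedge\Omega S^{m})\to A\vee(A\wedge\Omega S^{m})$.

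Next, I would compute the cofibre of this wedge-sum map as the wedge of cofibres of the two summands. The cofibre of $1_{A}$ is a point. For the other summand, Lemma~\ref{p''cofib} provides a homotopy cofibration $\nameddright{B}{b}{A}{p''}{S^{n-m}}$, and smashing with $\Omega S^{m}$ (which preserves cofibrations) yields a cofibration $\nameddright{B\wedge\Omega S^{m}}{b\wedge 1}{A\wedge\Omega S^{m}}{p''\wedge 1}{S^{n-m}\wedge\Omega S^{m}}$. Consequently the cofibre of $i\perp j$ is $S^{n-m}\wedge\Omega S^{m}$, with cofibre map the composite of the projection $A\vee(A\wedge\Omega S^{m})\to A\wedge\Omega S^{m}$ followed by $p''\wedge 1$.

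Finally, I would identify this cofibre map with $q\circ(p''\rtimes 1)$ using the naturality of the half-smash quotient in the first variable, which gives $q\circ(p''\rtimes 1)\simeq(p''\wedge 1)\circ q$. Combined with the identification of $q$ as the projection onto the second wedge summand under the co-$H$-splitting, this precisely matches the cofibre map produced above, yielding the asserted homotopy cofibration. The main delicacy lies in checking that $j$ really does correspond to the second-summand inclusion under the chosen splitting: the proof needs to make explicit that the inclusion $A\wedge\Omega S^{m}\hookrightarrow A\rtimes\Omega S^{m}$ used to define $j$ coincides (up to homotopy) with the section of $q$ that realises the co-$H$-splitting, which I would justify by tracking how that splitting is built from the comultiplication on $A$ inherited from $\overline{M}$.
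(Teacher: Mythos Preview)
Your proposal is correct and follows essentially the same argument as the paper: both use the co-$H$-splitting $A\rtimes\Omega S^{m}\simeq A\vee(A\wedge\Omega S^{m})$ to identify $i\perp j$ with $1\vee(b\wedge 1)$, smash the cofibration of Lemma~\ref{p''cofib} with $\Omega S^{m}$, and then invoke the naturality $q\circ(p''\rtimes 1)\simeq(p''\wedge 1)\circ q$ to identify the cofibre map. The paper simply asserts the identifications under the splitting, whereas you flag the need to check them; otherwise the proofs coincide.
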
 

\begin{proof} 
The homotopy cofibration 
\(\nameddright{B}{b}{A}{p''}{S^{n-m}}\) 
implies there is a homotopy cofibration 
\[\nameddright{B\wedge\Omega S^{m}}{b\wedge 1}{A\wedge\Omega S^{m}}{p''\wedge 1} 
    {S^{n-m}\wedge\Omega S^{m}}.\] 
This in turn implies that there is a homotopy cofibration 
\[\lllnameddright{A\vee(B\wedge\Omega S^{m})}{1\vee(b\wedge 1)}{A\vee(A\wedge\Omega S^{m})}  
    {\ast\vee(p''\wedge 1)}{S^{n-m}\wedge\Omega S^{m}}.\] 
But as $A$ is a co-$H$-space, there is a homotopy equivalence 
$A\vee(A\wedge\Omega S^{m})\simeq A\rtimes\Omega S^{m}$, under which $1\vee(b\wedge 1)$ 
becomes $i\perp j$ and $\ast\vee(p''\wedge 1)$ becomes $(p''\wedge 1)\circ q$. The naturality of $q$ implies $(p'' \wedge 1) \circ q \simeq q \circ (p'' \rtimes 1)$. 
This gives the asserted homotopy cofibration. 
\end{proof} 

Finally, we will identify the homotopy type  of $E'$ and prove Theorem~\ref{main}.  

\begin{proposition} 
   \label{E'type} 
   There is a homotopy equivalence $E'\simeq A\vee (B\wedge\Omega S^{m})$. 
\end{proposition}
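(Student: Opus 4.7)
The plan is to identify $E'$ as the cofibre of the top map in the cofibration of Lemma~\ref{qcofib} by producing a compatible wedge splitting of $A\rtimes\Omega S^{m}$.

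First, I would combine~(\ref{thetainv}) with Lemma~\ref{gammacontrol} to conclude that the composite
\[\nameddright{S^{n-1}\rtimes\Omega S^{m}}{\widetilde{\theta}}{A\rtimes\Omega S^{m}}{q\circ(p''\rtimes 1)}{S^{n-m}\wedge\Omega S^{m}},\]
where $\widetilde{\theta}=e'\circ\theta$, is a homotopy equivalence. Writing $g=q\circ(p''\rtimes 1)$ and $\eta=(g\circ\widetilde{\theta})^{-1}$, the composite $\psi=\widetilde{\theta}\circ\eta\colon S^{n-m}\wedge\Omega S^{m}\to A\rtimes\Omega S^{m}$ is then a genuine homotopy section of $g$.

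Second, I would use this section, together with the map $i\perp j$ from Lemma~\ref{qcofib}, to assemble a map
\[\phi\colon (A\vee(B\wedge\Omega S^{m}))\vee(S^{n-m}\wedge\Omega S^{m})\longrightarrow A\rtimes\Omega S^{m}\]
with $\phi|_{A\vee(B\wedge\Omega S^{m})}=i\perp j$ and $\phi|_{S^{n-m}\wedge\Omega S^{m}}=\psi$. Because $g\circ\psi\simeq\mathrm{id}$, the long exact homology sequence of the cofibration in Lemma~\ref{qcofib} degenerates into a split short exact sequence, so $\phi_{\ast}$ realises an isomorphism $\widetilde{H}_{\ast}(A\rtimes\Omega S^{m})\cong\widetilde{H}_{\ast}(A\vee(B\wedge\Omega S^{m}))\oplus\widetilde{H}_{\ast}(S^{n-m}\wedge\Omega S^{m})$. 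Since $m\geq 2$ and $n>2m$, all spaces in sight are simply-connected, so Whitehead's theorem promotes $\phi$ to a homotopy equivalence.

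Third, the wedge inclusion $S^{n-m}\wedge\Omega S^{m}\hookrightarrow(A\vee(B\wedge\Omega S^{m}))\vee(S^{n-m}\wedge\Omega S^{m})$ has cofibre $A\vee(B\wedge\Omega S^{m})$. Composing with the homotopy equivalence $\phi$, the cofibre of $\psi$ is therefore $A\vee(B\wedge\Omega S^{m})$. Since $\psi$ differs from $\widetilde{\theta}$ only by pre-composition with the homotopy equivalence $\eta$, the two share the same cofibre; and since $e'$ is a homotopy equivalence by Lemma~\ref{Etype}, the cofibre of $\widetilde{\theta}=e'\circ\theta$ agrees with that of $\theta$, which by~(\ref{BTcofib}) is $E'$. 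Hence $E'\simeq A\vee(B\wedge\Omega S^{m})$.

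The principal technical point is step two: verifying that the section $\psi$ really produces a wedge decomposition of the total space of the cofibration. This is the step where one must be careful to confirm the simple-connectivity hypotheses on $A$, $B$, and $\Omega S^{m}$ (so that Whitehead applies) and to check that the splitting of the long exact sequence identified from $\psi$ is exactly the decomposition of $H_{\ast}(A\rtimes\Omega S^{m})$ induced by $\phi$.
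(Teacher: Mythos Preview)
Your proof is correct and follows essentially the same strategy as the paper: both use that $q\circ(p''\rtimes 1)\circ(e'\circ\theta)$ is a homotopy equivalence (from~(\ref{thetainv}) and Lemma~\ref{gammacontrol}), combine this with the cofibration of Lemma~\ref{qcofib}, and appeal to Whitehead's theorem. The only difference is presentational: the paper organises the argument as a $3\times 3$ homotopy cofibration diagram whose bottom-right corner is contractible, directly producing a map $A\vee(B\wedge\Omega S^{m})\to E'$ that is a homology isomorphism, whereas you first build the wedge splitting $\phi$ of $A\rtimes\Omega S^{m}$ and then read off the cofibre of $\psi$. The paper's route is marginally more direct, but the content is the same.
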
 

\begin{proof} 
Consider the homotopy cofibration 
\[\nameddright{S^{n-1}\rtimes\Omega S^{m}}{\theta}{E}{}{E'}.\]  
By~(\ref{thetainv}), the composite 
\[\nameddright{S^{n-1}\rtimes\Omega S^{m}}{\theta}{E}{q\circ\gamma}{S^{n-m}\wedge\Omega S^{m}}\]
is a homotopy equivalence. By Lemma~\ref{gammacontrol}, $\gamma\simeq (p''\rtimes 1)\circ e'$. 
Thus the composite 
\[\lllnameddright{S^{n-1}\rtimes\Omega S^{m}}{e'\circ\theta}{A\rtimes\Omega S^{m}}{q\circ(p''\rtimes 1)} 
     {S^{n-m}\wedge\Omega S^{m}}\] 
is a homotopy equivalence. As the homotopy cofibre of $\theta$ is $E'$ and $e'$ is a homotopy  
equivalence, the homotopy cofibre of $e'\circ\theta$ is also $E'$. Therefore, using Lemma~\ref{qcofib}, 
we obtain a homotopy cofibration diagram 
\[\diagram 
       & A\vee (B\wedge\Omega S^{m})\rdouble\dto^{i\perp j} & A\vee (B\wedge\Omega S^{m})\dto \\ 
       S^{n-1}\rtimes\Omega S^{m}\rto^-{e'\circ\theta}\ddouble  & A\rtimes\Omega S^{m}\rto\dto^{q\circ(p''\rtimes 1)} 
            & E'\dto \\  
       S^{n-1}\rtimes\Omega S^{m}\rto^-{\simeq} & S^{n-m}\wedge\Omega S^{m}\rto & \ast. 
  \enddiagram\] 
The homotopy cofibration in the right column implies that the map 
\(\namedright{A\vee(B\wedge\Omega S^{m})}{}{E'}\) 
induces an isomorphism in homology, and is therefore a homotopy equivalence by Whitehead's Theorem 
since all spaces are simply-connected. 
\end{proof}  

%\begin{theorem} 
%   \label{Mfib} 
%   Let $M$ be an $(m-1)$-connected $n$-dimensional Poincar\'{e} Duality complex, where $2\leq m<n-m$. 
%   Suppose that $\overline{M}$ is a co-$H$-space and there is a map 
%   \(\namedright{S^{m}}{}{M}\) 
%   with a left homotopy inverse 
%   \(\namedright{M}{h'}{S^{m}}\). 
%   Then there is a homotopy fibration 
%   \[\nameddright{A\vee(B\wedge\Omega S^{m})}{}{M}{h'}{S^{m}}\] 
%   that splits after looping to give a homotopy equivalence 
%   \[\Omega M\simeq\Omega S^{m}\times\Omega(A\vee(B\wedge\Omega S^{m})).\] 
%\end{theorem} 

\begin{proof}[Proof of Theorem~\ref{main}] 
Take 
\(\namedright{S^{m}}{s'}{M}\) 
and 
\(\namedright{M}{h'}{S^{m}}\) 
as the maps in the statement of the theorem.  By definition, $E'$ is the homotopy fibre of $h'$. 
By Proposition~\ref{E'type}, $E'\simeq A\vee(B\wedge\Omega S^{m})$. 
This proves the asserted homotopy fibration. Since $h'$ has a right 
homotopy inverse, the asserted homotopy equivalence for $\Omega M$ 
follows immediately. 
\end{proof} 

\begin{remark}
    \label{localMfib}
      There is a localised version of Theorem ~\ref{main} which will be used in Section ~\ref{sec:highyconnPDcomplex}. Let~$M$ be a $(k-1)$-connected Poincar\'e duality complex of dimension $n$, where $2 \leq k < n$. Let~$m$ be the least number such that $H_m(M)$ contains a $\mathbb{Z}$ summand, and suppose $m < n$. Let~$\Gamma$ be the set of primes appearing as $p$-torsion in $H_i(M)$ for $i < m$. Localised away from $\Gamma$, $M$ is an $(m-1)$-connected complex that satisfies Poincar\'e duality. In this case, if the hypotheses of Theorem ~\ref{main} hold after localisation away from $\Gamma$, then so do the conclusions. 
\end{remark} 

\begin{remark} 
\label{natremark} 
Theorem~\ref{main} satisfies a naturality property. Let  
$M$ and $N$ be two $(m-1)$-connected $n$-dimensional Poincar\'{e} 
Duality complexes where $2\leq m<n$, that $\overline{M}$ and $\overline{N}$ 
are co-$H$-spaces, and there are maps 
\(s_{M}\colon\namedright{S^{m}}{}{M}\) 
and 
\(s_{N}\colon\namedright{S^{m}}{}{N}\) 
having left homotopy inverses 
\(h_{M}\colon\namedright{M}{}{S^{m}}\) 
and 
\(h_{N}\colon\namedright{N}{}{S^{m}}\) 
respectively. Suppose that there is a map 
\(\alpha\colon\namedright{M}{}{N}\).   
Let 
\(\overline{\alpha}\colon\namedright{\overline{M}}{}{\overline{N}}\) 
be the restriction of $\alpha$ to $(n-1)$-skeletons. If  
(i) $\overline{\alpha}$ is a co-$H$-map, (ii) there is a homotopy 
commutative diagram 
\[\diagram 
     S^{m}\rto^-{s_{M}}\dto^{\beta} & M\rto^-{h_{M}}\dto^{\alpha} 
        & S^{m}\dto^{\beta} \\ 
     S^{m}\rto^-{s_{N}} & N\rto^-{h_{N}} & S^{m} 
   \enddiagram\] 
for some map $\beta$, and (iii) there is a homotopy commutative diagram 
of associated homotopy coactions 
\[\diagram 
     \overline{M}\rto^-{\psi'_{M}}\dto^{\overline{\alpha}} & \overline{M}\vee S^{n-m}\dto^{\overline{\alpha}\vee\lambda} \\ 
     \overline{N}\rto^-{\psi'_{N}} & \overline{N}\vee S^{n-m} 
  \enddiagram\]
for some map $\lambda$, then there is a homotopy fibration diagram 
\[\diagram 
     A_{M}\vee(B_{M}\wedge\Omega S^{m})\rto\dto^{a\vee(b\wedge\Omega\beta)} 
         & M\rto^-{h_{M}}\dto^{\alpha} & S^{m}\dto^{\beta} \\ 
     A_{N}\vee(B_{N}\wedge\Omega S^{m})\rto & N\rto^-{h_{N}} & S^{m} 
  \enddiagram\] 
for some maps $a$ and $b$, and there are correspondingly compatible loop space 
decompositions of $\Omega M$ and $\Omega N$. 

To explain why this is true, observe that the inclusions of $(n-1)$-skeletons 
leads to a homotopy cofibration diagram 
\[\diagram 
     S^{n-1}\rto^-{f_{M}}\dto^{d} 
       & \overline{M}\rto^-{i_{M}}\dto^{\overline{\alpha}} 
       & M\dto^{\alpha} \\ 
     S^{n-1}\rto^-{f_{N}} & \overline{N}\rto^-{i_{N}} & N 
  \enddiagram\] 
where $i_{M}$ and $i_{N}$ are the inclusions of the $(n-1)$-skeletons, 
$f_{M}$ and $f_{N}$ are the attaching maps for the $n$-cells, and 
$d$ is some map (of degree $d$). Note here that the right square 
clearly commutes by skeletal restriction, so it induces a map of 
homotopy fibres, and the simple-connectivity of $M$ and~$N$ implies 
by the Blakers-Massey Theorem that the map of fibres to total spaces 
coincides with a map of attaching maps in degrees~$\leq n-1$, giving 
the homotopy commutativity of the left square. This diagram of 
homotopy cofibrations, together with both the left and right squares 
in condition (ii), implies by \cite[Remark 2.7]{T1} that there is a homotopy 
cofibration diagram 
\[\diagram 
    S^{n-1}\rtimes\Omega S^{m}\rto^-{\theta_{M}}\dto^{d\rtimes\Omega\beta} 
        & E_{M}\rto\dto^{\epsilon} & E'_{M}\dto^{\epsilon'} \\  
    S^{n-1}\rtimes\Omega S^{m}\rto^-{\theta_{N}} & E_{N}\rto & E'_{N} 
  \enddiagram\] 
for some maps $\epsilon$ and $\epsilon'$. The homotopy commutative 
diagram of homotopy coactions in condition~(iii) implies that the 
construction of the left homotopy inverses of $\theta_{M}$ and $\theta_{N}$ 
are natural. These are used in combination with the homotopy decompositions 
of $E_{M}$ and $E_{N}$ in Lemma~\ref{Etype}. That Lemma is natural since 
$\overline{\alpha}$ is a co-$H$-map, giving a homotopy commutative diagram 
\[\diagram 
       \overline{M}\rto^-{e_{M}}\dto^{\overline{\alpha}} 
           & S^{m}\vee A_{M}\dto^{\beta\vee a} \\
       \overline{N}\rto^-{e_{N}} & S^{m}\vee A_{N} 
  \enddiagram\] 
where $e_{M}$ and $e_{N}$ are homotopy equivalences and $a$ is the map of 
homotopy cofibres induced by the homotopy commutativity of the left square 
in condition (ii). The pinch maps to $S^{m}$ on the right in this diagram 
are natural so there is an induced homotopy commutative diagram of fibres 
\[\diagram 
     E_{M}\rto^-{e'_{M}}\dto 
         & A_{M}\rtimes\Omega S^{m}\dto^{a\rtimes\Omega\beta} \\ 
     E_{N}\rto^-{e'_{N}} & A_{N}\rtimes\Omega S^{m} 
  \enddiagram\]
where $e'_{M}$ and $e'_{N}$ are homotopy equivalences. This together 
with the naturality of the left homotopy inverses for $\theta_{M}$ 
and $\theta_{N}$ imply that Lemma~\ref{gammacontrol} is natural. The 
space $B$ in Lemma~\ref{p''cofib} is constructed using the homotopy fibre of 
\(\namedright{A}{p''}{S^{n-m}}\), 
which satisfies a naturality property since this map is determined by 
\(\namedright{\overline{M}}{}{S^{n-m}}\) 
and the latter is natural because of condition~(iii). Thus all the 
ingredients in the statement and proof of Proposition~\ref{E'type} 
for the homotopy type of $E'$ are natural, and hence so is 
Theorem~\ref{main}. 
\end{remark}

\section{A refinement}
\label{sec:Refinement}

 In this section, the decomposition of $\Omega M$ in Theorem ~\ref{main} is refined when $\overline{M}$ is homotopy equivalent to a wedge of spheres and Moore spaces, in which case the spaces $A$ and $B$ can be explicitly identified.

As notation, let $\mathcal{W}$ be the collection of topological spaces that are homotopy equivalent to a finite type wedge of spheres and let $\mathcal{M}$ be the collection of topological spaces homotopy equivalent to a finite type wedge of spheres and Moore spaces. Note that $\mathcal{W}\subset\mathcal{M}$.
For a space $X$ and $n \geq 0$, let~$X^{\vee n}$ be the $n$-fold wedge sum of copies of $X$, where if $n = 0$ then $X^{\vee 0} = *$. 

Let $M$ be an $(m-1)$-connected $n$-dimensional Poincar\'{e} Duality complex. Separate the homology groups into torsion-free and torsion components: 
\[H_i(M) \cong \mathbb{Z}^{d_i} \oplus T_i\] 
where $d_i \geq 0$ and $T_i$ is a finite abelian group. Recall that 
$A$ is the homotopy cofibre of the map $S^m \xrightarrow{s} \overline{M}$, where $s$ has a left homotopy inverse, and by Lemma~\ref{p''cofib}, there is a homotopy cofibration 
\(\nameddright{B}{b}{A}{p''}{S^{n-m}}\). 

\begin{proposition}
    \label{prop:htpytypeAandB}
    Let $M$ be an $(m-1)$-connected, Poincar\'e duality complex of dimension $n$, where $2 \leq m < n$, and suppose that $\overline{M} \in \mathcal{M}$. Then there are homotopy equivalences \[A \simeq (S^{m})^{\vee d_m-1} \vee \bigvee\limits_{i=m+1}^{n-m}(S^{i})^{\vee d_i} \vee \bigvee\limits_{i=m}^{n-m-1} P^{i+1}(T_i)\] and \[B \simeq (S^{m})^{\vee d_m-1} \vee (S^{n-m})^{\vee d_{n-m}-1} \vee\bigvee\limits_{i=m+1}^{n-m-1}(S^{i})^{\vee d_i} \vee \bigvee\limits_{i=m}^{n-m-1} P^{i+1}(T_i).\]
\end{proposition}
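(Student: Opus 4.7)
The plan is to identify $\overline{M}$ explicitly as a wedge of spheres and Moore spaces, then read off $A$ and $B$ by normalising the maps $s$ and $p''$ to standard inclusions and projections via self-equivalences. The first step is to compute $H_{\ast}(\overline{M})$. From the cofibration $S^{n-1} \xrightarrow{f} \overline{M} \to M$ and the fact (recorded in the discussion preceding Theorem~\ref{Tinert}) that $\dim \overline{M} = n-m$, one has $H_i(\overline{M}) \cong H_i(M) = \mathbb{Z}^{d_i} \oplus T_i$ for $m \leq i \leq n-m$ and $0$ elsewhere in positive degrees. Poincar\'e duality in degree $n-m+1$ gives $H^{n-m+1}(M) \cong H_{m-1}(M) = 0$, and then universal coefficients force $\mathrm{Ext}(H_{n-m}(M),\mathbb{Z}) = 0$, so $T_{n-m} = 0$. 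Since $\overline{M} \in \mathcal{M}$ is determined up to homotopy by its homology,
\[
\overline{M} \simeq (S^m)^{\vee d_m} \vee \bigvee_{i=m+1}^{n-m}(S^i)^{\vee d_i} \vee \bigvee_{i=m}^{n-m-1} P^{i+1}(T_i).
\]

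Next I would normalise $s$ to identify $A$. Since $s : S^m \to \overline{M}$ admits a left homotopy inverse, Hurewicz shows its class in $\pi_m(\overline{M}) \cong H_m(\overline{M}) = \mathbb{Z}^{d_m} \oplus T_m$ lies in the free part and is a primitive vector $v \in \mathbb{Z}^{d_m}$. A matrix in $GL_{d_m}(\mathbb{Z})$ sending $v$ to the first basis vector defines an automorphism of $(S^m)^{\vee d_m}$ which extends by the identity on the remaining wedge summands to a self-equivalence $\phi$ of $\overline{M}$. Then $\phi \circ s$ is homotopic to the inclusion of the first $S^m$ summand, and its cofibre is the wedge of the remaining summands, yielding the claimed form of $A$.

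The identification of $B$ proceeds analogously, with one extra ingredient. Since $\dim A \leq n-m$ and $S^{n-m} \to K(\mathbb{Z},\,n-m)$ is an $(n-m+1)$-equivalence, the natural map $[A,\,S^{n-m}] \to H^{n-m}(A)$ is a bijection. Under the isomorphism $a^{\ast} : H^{n-m}(A) \cong H^{n-m}(\overline{M}) = \mathbb{Z}^{d_{n-m}} \oplus T_{n-m-1}$, the class $(p'')^{\ast}(\iota)$ corresponds to the primitive element $y = (p')^{\ast}(\iota)$ in the free part. Hence a change of basis in $GL_{d_{n-m}}(\mathbb{Z})$ on the $(S^{n-m})^{\vee d_{n-m}}$ wedge summand of $A$ extends to a self-equivalence under which $p''$ is homotopic to the projection $\pi_1 : A = S^{n-m} \vee A'' \to S^{n-m}$, where $A''$ is precisely the claimed form of $B$. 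By Lemma~\ref{Ganea}, the homotopy fibre of $\pi_1$ is $A'' \rtimes \Omega S^{n-m} \simeq A'' \vee (A'' \wedge \Omega S^{n-m})$, using that $A''$ is a co-$H$-space. Since $A''$ is $(m-1)$-connected and $\Omega S^{n-m}$ is $(n-m-2)$-connected, $A'' \wedge \Omega S^{n-m}$ is $(n-2)$-connected, so (using $m \geq 2$) its cells lie in dimensions $\geq n-1 > n-m$. Therefore the $(n-m)$-skeleton of the fibre is $A''$, and Lemma~\ref{p''cofib} gives $B \simeq A''$. The main technical point is ensuring that $(p'')^{\ast}(\iota)$ has no torsion component, so that a plain $GL_{d_{n-m}}$-basis change suffices for the normalisation; this comes for free from $y$ being a primitive $\mathbb{Z}$-summand generator.
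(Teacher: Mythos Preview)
Your strategy of explicitly normalising $s$ and $p''$ via self-equivalences is valid and genuinely different from the paper's, but both normalisation steps share a gap. You claim the class of $s$ in $\pi_m(\overline{M}) \cong \mathbb{Z}^{d_m} \oplus T_m$ ``lies in the free part'', but a left inverse only shows that $[s]$ generates a direct $\mathbb{Z}$-summand, not that its $T_m$-component vanishes in the chosen splitting: if $\pi_m(\overline{M}) = \mathbb{Z} \oplus \mathbb{Z}/p$ and $[s] = (1,1)$, the projection to $\mathbb{Z}$ is still a left inverse. The same applies to $(p'')^*(\iota) \in H^{n-m}(A) \cong \mathbb{Z}^{d_{n-m}} \oplus T_{n-m-1}$; contrary to your final sentence, generating a primitive $\mathbb{Z}$-summand does not force the torsion component to vanish. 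Both gaps are repairable by enlarging the self-equivalence to include a shear (add to the relevant sphere inclusion a map into, or out of, the Moore summands realising the negative of the torsion component), but this must be argued.

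The paper's proof avoids all of this with a slicker, non-constructive argument. For $A$: Lemma~\ref{Mbarsplit} gives $\overline{M} \simeq S^m \vee A$, so $A$ is a retract of $\overline{M}$; since $\mathcal{M}$ is closed under retracts, $A \in \mathcal{M}$ and its homotopy type is read off from homology. For $B$: since $A \in \mathcal{M}$ and $(p'')_*$ is onto in degree $n-m$, one can choose $f\colon S^{n-m} \to A$ with $p'' \circ f$ a homology isomorphism, hence a homotopy equivalence; thus $p''$ has a section, the cofibration $B \to A \to S^{n-m}$ splits as $A \simeq S^{n-m} \vee B$, and $B$ retracts off $A \in \mathcal{M}$. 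This bypasses both the torsion bookkeeping and your fibre-skeleton computation. Your route has the merit of not needing the retract-closure of $\mathcal{M}$, but the paper's is shorter and complete as written.
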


\begin{proof}
   By Lemma ~\ref{Mbarsplit}, there is a homotopy equivalence \[\overline{M} \simeq S^m \vee A.\] This implies that $A$ retracts off $\overline{M}$. By \cite[Theorem 3.5]{St}, $\mathcal{M}$ is closed under retracts. Therefore, as $\overline{M}\in \mathcal{M}$ by hypothesis, we obtain $A\in \mathcal{M}$. The asserted  homotopy equivalence for $A$ therefore follows from the homology of $\overline{M}$.

By definition, $p''$ factors through 
\(\namedright{M}{p'}{S^{n-m}}\), 
which induces an epimorphism in homology. Therefore so does $p''$.
Thus, as $A\in \mathcal{M}$, 
%there is an element $x \in H_{n-m}(A)$ in the Hurewicz image such that $p''$ maps $x$ to a generator $y \in H_{n-m}(S^{n-m})$. This implies 
there is a map $f\colon S^{n-m} \rightarrow A$ such that $p'' \circ f$ induces an isomorphism in homology. Thus $p''$ has a right homotopy inverse, implying that the homotopy cofibration 
    \(\nameddright{B}{b}{A}{p''}{S^{n-m}}\) 
    splits to give a homotopy equivalence 
    $A \simeq S^{n-m} \vee B$. In particular, $B$ retracts off $A$, implying that $B\in \mathcal{M}$. The asserted homotopy equivalence for $B$ therefore follows from the decomposition of $A$. 
\end{proof}

Applying Theorem~\ref{main}, we obtain the following.

\begin{corollary}
    \label{cor:MbarwedgeofsphereandMoore}
    Let $M$ be an $(m-1)$-connected, Poincar\'e duality complex of dimension $n$, where $2 \leq m < n$. Write $H_i(M) \cong \mathbb{Z}^{d_i} \oplus T_i$, where $d_i \geq 0$ and $T_i$ is a finite abelian group. Suppose that $\overline{M}\in \mathcal{M}$ and there is a map 
   \(\namedright{S^{m}}{}{M}\) 
   with a left homotopy inverse 
   \(\namedright{M}{h'}{S^{m}}\). Then there is a homotopy fibration 
   \[\nameddright{A\vee(B\wedge\Omega S^{m})}{}{M}{h'}{S^{m}},\] where \[A \simeq (S^{m})^{\vee d_m-1} \vee \bigvee\limits_{i=m+1}^{n-m}(S^{i})^{\vee d_i} \vee \bigvee\limits_{i=m}^{n-m-1} P^{i+1}(T_i)\] and \[B \simeq (S^{m})^{\vee d_m-1} \vee (S^{n-m})^{\vee d_{n-m}-1} \vee\bigvee\limits_{i=m+1}^{n-m-1}(S^{i})^{\vee d_i} \vee \bigvee\limits_{i=m}^{n-m-1} P^{i+1}(T_i),\]
and this homotopy fibration splits after looping to give a homotopy equivalence 
   \[\Omega M\simeq\Omega S^{m}\times\Omega(A\vee(B\wedge\Omega S^{m})).\eqno\qqed\] 
\end{corollary}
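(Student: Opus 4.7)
The plan is that this corollary is essentially a direct packaging of Theorem~\ref{main} together with Proposition~\ref{prop:htpytypeAandB}, so the proof proposal reduces to verifying that the hypotheses of both results are available and then splicing their conclusions.

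First I would check the hypotheses of Theorem~\ref{main}. The assumptions on connectivity and dimension ($2\leq m < n-m$) are imposed directly, as is the existence of the map $S^m\to M$ with left homotopy inverse $h'$. The only hypothesis that needs a brief remark is that $\overline{M}$ is a co-$H$-space: this is automatic from $\overline{M}\in\mathcal{M}$, since any finite-type wedge of spheres $S^i$ (with $i\geq 2$ by connectivity) and Moore spaces $P^{i+1}(T_i)$ is a suspension, hence a co-$H$-space. With these hypotheses verified, Theorem~\ref{main} directly produces the homotopy fibration
\[\nameddright{A\vee(B\wedge\Omega S^{m})}{}{M}{h'}{S^{m}}\]
and the loop space decomposition $\Omega M\simeq\Omega S^{m}\times\Omega(A\vee(B\wedge\Omega S^{m}))$, for the same spaces $A$ and $B$ arising from the homotopy cofibrations of Section~\ref{sec:DecompofPDsusskel}.

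Next I would invoke Proposition~\ref{prop:htpytypeAandB}, whose hypotheses are identical to those already in hand. It supplies the explicit homotopy equivalences
\[A \simeq (S^{m})^{\vee d_m-1} \vee \bigvee\limits_{i=m+1}^{n-m}(S^{i})^{\vee d_i} \vee \bigvee\limits_{i=m}^{n-m-1} P^{i+1}(T_i)\]
and
\[B \simeq (S^{m})^{\vee d_m-1} \vee (S^{n-m})^{\vee d_{n-m}-1} \vee\bigvee\limits_{i=m+1}^{n-m-1}(S^{i})^{\vee d_i} \vee \bigvee\limits_{i=m}^{n-m-1} P^{i+1}(T_i),\]
which are determined purely by the Betti numbers $d_i$ and torsion summands $T_i$ read off from $H_{\ast}(M)$. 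Substituting these identifications into the conclusion of Theorem~\ref{main} yields the statement of the corollary verbatim.

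There is no substantive obstacle here; the entire content lies in Theorem~\ref{main} and Proposition~\ref{prop:htpytypeAandB}, and the only mild point to address is the implication $\overline{M}\in\mathcal{M}\Rightarrow\overline{M}$ is a co-$H$-space. Consequently the proof is short enough to be left as a one-line deduction ending in $\qqed$.
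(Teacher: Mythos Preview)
Your proposal is correct and matches the paper's approach exactly: the paper presents this corollary as an immediate consequence of Theorem~\ref{main} combined with Proposition~\ref{prop:htpytypeAandB}, stating only ``Applying Theorem~\ref{main}, we obtain the following'' and ending the statement with $\qqed$. Your observation that $\overline{M}\in\mathcal{M}$ guarantees the co-$H$-space hypothesis is the one small point implicitly used, and you have handled it correctly.
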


%\Lewis{Note that the following example does not hold yet since we need to assume $m < n-m$.}
%\begin{example} 
%\label{ex:n-12ndimex}
%Let $M$ be an $(n-1)$-connected $2n$-dimensional Poincar\'{e} Duality complex for $n\geq 2$. Then 
%$\overline{M}$ is homotopy equivalent to a wedge of copies of $S^{n}$. In particular, 
%$\overline{M}$ is a co-$H$-space. If the rank of $H_{n}(M)$ is at least $2$ and $n\notin\{2,4,8\}$, 
%then in~\cite{BT1} it is shown that there is a map 
%\(\namedright{S^{n}}{}{M}\) 
%with a left homotopy inverse. Thus $M$ satisfies the hypotheses of Corollary ~\ref{cor:MbarwedgeofsphereandMoore}. 
%\end{example} 

\section{Inert attaching maps and loop space homology} 
\label{sec:inert} 

Let $\mathcal{A}$ be the collection of Poincar\'{e} Duality complexes $M$ such that: 
\begin{enumerate} 
    \item $\overline{M}$ is a co-$H$ space; 
    \item if $M$ is $(m-1)$-connected with $2\leq m<\mbox{dim}\, M$ then there is a map \(\namedright{S^{m}}{s'}{M}\) that has a left homotopy inverse \(\namedright{M}{h'}{S^{m}}.\)
\end{enumerate} 
Theorem~\ref{main} applies to any $M\in\mathcal{A}$. In this section we show that 
the class $\mathcal{A}$ of Poincar\'{e} Duality complexes has two interesting properties: 
one is that $\Omega M$ retracts off $\Omega\overline{M}$, and the other is that 
$H_{\ast}(M;R)$ can be calculated as an algebra for appropriate rings $R$. 

Suppose that there is a homotopy cofibration 
\(\nameddright{A}{i}{X}{h}{Y}\). 
Following~\cite{T1}, the map $i$ is \emph{inert} if $\Omega h$ has a right homotopy inverse. 
This definition is inspired from rational homotopy theory~\cite{HL}, where in a homotopy cofibration 
\(\nameddright{S^{n-1}}{f}{X}{h}{X\cup e^{n}}\) 
the attaching map $f$ is inert if $\Omega h$ has a right homotopy inverse. (Rationally, the 
inert property tends to be equivalently described in terms of the associated rational homotopy 
Lie algebra.) The idea is that an inert cell attachment kills off homotopy groups 
of $X$ but does not introduce new ones in $X\cup e^{n}$. 

In~\cite{BT2} it was shown that if $\Omega i$ has a right homotopy inverse then there is 
a homotopy fibration 
\[\nameddright{A\rtimes\Omega M}{}{\overline{M}}{i}{M}\] 
that splits after looping to give a homotopy equivalence 
\[\Omega\overline{M}\simeq\Omega M\times\Omega(A\rtimes\Omega M).\] 
In~\cite{T2} it was shown that if $M$ is an $(m-1)$-connected $n$-dimensional Poincar\'{e} 
Duality complex and there is a map 
\(\namedright{S^{m}}{}{M}\) 
with a left homotopy inverse, then in the homotopy cofibration 
\(\nameddright{S^{n-1}}{f}{\overline{M}}{i}{M}\), 
the attaching map for the $n$-cell of $M$ is inert. The hypothesis on the map 
\(\namedright{S^{m}}{}{M}\) 
is exactly the second condition for being in the class $\mathcal{A}$. Thus we obtain the following 
re-statement of Theorem~\ref{introinert}.  

\begin{theorem} 
   \label{Minert} 
   Let $M$ be an $(m-1)$-connected $n$-dimensional Poincar\'{e} Duality complex. 
   If $M\in\mathcal{A}$ then the attaching map for the $n$-cell of $M$ is inert. Consequently, 
   there is a homotopy fibration 
   \[\nameddright{S^{n-1}\rtimes\Omega M}{}{\overline{M}}{i}{M}\] 
   that splits after looping to give a homotopy equivalence 
   \[\Omega\overline{M}\simeq\Omega M\times\Omega(S^{n-1}\rtimes\Omega M).\] 
\end{theorem}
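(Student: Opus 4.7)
The plan is to assemble the theorem by invoking, in sequence, two results that are explicitly quoted in the discussion preceding the statement, with no new constructions needed. Membership in the class $\mathcal{A}$ packages exactly the data required: by definition $M$ is an $(m-1)$-connected $n$-dimensional Poincar\'e Duality complex with $2\leq m< n-m$, and there is a map \(\namedright{S^{m}}{s'}{M}\) with a left homotopy inverse \(\namedright{M}{h'}{S^{m}}\). (The co-$H$-space hypothesis on $\overline{M}$ plays no role here, which is why it does not appear in the statement.)

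First I would apply the inertness result of~\cite{T2}. Its hypotheses are precisely those just listed, and its conclusion is that in the homotopy cofibration
\[\nameddright{S^{n-1}}{f}{\overline{M}}{i}{M},\]
the attaching map $f$ for the $n$-cell is inert, meaning $\Omega i$ admits a right homotopy inverse. This immediately yields the first assertion of the theorem.

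Next I would invoke the general splitting principle of~\cite{BT2}: given a homotopy cofibration \(\nameddright{A}{j}{X}{h}{Y}\) in which $\Omega h$ has a right homotopy inverse, there is a homotopy fibration \(\nameddright{A\rtimes\Omega Y}{}{X}{h}{Y}\) whose looping splits as $\Omega X\simeq\Omega Y\times\Omega(A\rtimes\Omega Y)$. Specialising to $A=S^{n-1}$, $j=f$, $X=\overline{M}$, $h=i$, and $Y=M$ produces the homotopy fibration
\[\nameddright{S^{n-1}\rtimes\Omega M}{}{\overline{M}}{i}{M}\]
together with the asserted loop decomposition. The only ``obstacle'' is the bookkeeping check that the defining conditions of $\mathcal{A}$ match the hypotheses of~\cite{T2}, and that inertness is precisely the hypothesis for the splitting result of~\cite{BT2}; both matches are by definition, so the two cited results concatenate without further work.
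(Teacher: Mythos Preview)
Your proposal is correct and matches the paper's approach exactly: the paper treats Theorem~\ref{Minert} as an immediate consequence of the two cited results, invoking~\cite{T2} for inertness (using only condition~(2) in the definition of $\mathcal{A}$) and then~\cite{BT2} for the fibration and loop splitting, with no additional argument. One minor remark: your parenthetical that the co-$H$-space hypothesis ``does not appear in the statement'' is slightly off, since it is present implicitly through $M\in\mathcal{A}$; your real point---that it is not used in the proof---is correct and is also implicit in the paper's discussion.
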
 
\vspace{-1cm}~$\qqed$\bigskip 

There is a useful homological consequence of an inert map, derived from the following more general 
statement proved in~\cite[Proposition 10.1]{T1}.

\begin{proposition}
    \label{prop:loophominert}
    Suppose there is a homotopy cofibration \[\Sigma A \xrightarrow{f} \Sigma X \xrightarrow{h} Y\] where $\Omega h$ has a right homotopy inverse. Let $\widetilde{f}:A \rightarrow \Omega \Sigma X$ be the adjoint of $f$ and let $R$ be a commutative ring with unit such that $H_*(\Sigma X;R)$ is a free-$R$ module. Then there is an algebra isomorphism \[H_*(\Omega Y;R) \cong T(\widetilde{H}_*(X);R)/(\mathrm{Im}(\widetilde{f}_*)),\] where $(\mathrm{Im}(\widetilde{f}_*))$ is the two sided ideal generated by $\mathrm{Im}(\widetilde{f}_*)$. Moreover, if $X$ is a suspension then this is an isomorphism of Hopf algebras. $\qqed$
\end{proposition}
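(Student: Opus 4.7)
The strategy is to build an explicit algebra map from $T(\widetilde{H}_{\ast}(X;R))/(\mathrm{Im}(\widetilde{f}_{\ast}))$ to $H_{\ast}(\Omega Y;R)$ and show it is an isomorphism, using the Bott--Samelson theorem together with the splitting provided by the inert hypothesis.

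\emph{Setting up the map.} Since $H_{\ast}(\Sigma X;R)$ is a free $R$-module, the Bott--Samelson (James) theorem gives an algebra isomorphism $H_{\ast}(\Omega\Sigma X;R)\cong T(\widetilde{H}_{\ast}(X;R))$. As $\Sigma A\xrightarrow{f}\Sigma X\xrightarrow{h}Y$ is a cofibration, $h\circ f$ is null-homotopic, and passing to adjoints gives that $\Omega h\circ\widetilde{f}$ is null-homotopic, so $(\Omega h)_{\ast}\circ\widetilde{f}_{\ast}=0$. The loop map $\Omega h$ is an $H$-map, so $(\Omega h)_{\ast}$ is an algebra homomorphism whose kernel is a two-sided ideal containing $\mathrm{Im}(\widetilde{f}_{\ast})$. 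This induces a well-defined algebra homomorphism
\[\phi\colon T(\widetilde{H}_{\ast}(X;R))/(\mathrm{Im}(\widetilde{f}_{\ast}))\longrightarrow H_{\ast}(\Omega Y;R).\]
As $\Omega h$ has a right homotopy inverse, $(\Omega h)_{\ast}$ is split surjective, hence so is $\phi$.

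\emph{Injectivity.} This is the main step. Applying the Beben--Theriault construction underlying Theorem~\ref{Minert} to our cofibration produces a homotopy fibration $\Sigma A\rtimes\Omega Y\to\Sigma X\xrightarrow{h}Y$ which splits after looping, giving
\[\Omega\Sigma X\simeq\Omega Y\times\Omega(\Sigma A\rtimes\Omega Y).\]
Consequently, $H_{\ast}(\Omega\Sigma X;R)\cong H_{\ast}(\Omega Y;R)\otimes_{R}H_{\ast}(\Omega(\Sigma A\rtimes\Omega Y);R)$ as $R$-modules, with the projection to the first tensor factor realising $(\Omega h)_{\ast}$. Therefore $\ker((\Omega h)_{\ast})$ is the two-sided ideal in $T(\widetilde{H}_{\ast}(X;R))$ generated by the positive-degree part of $H_{\ast}(\Omega(\Sigma A\rtimes\Omega Y);R)$. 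Using $\Sigma A\rtimes\Omega Y\simeq\Sigma A\vee(\Sigma A\wedge\Omega Y)$ (since $\Sigma A$ is a co-$H$-space) and applying iterated James splittings, each such generator can be traced back to an element lying in the two-sided ideal $(\mathrm{Im}(\widetilde{f}_{\ast}))$. Hence $\ker((\Omega h)_{\ast})=(\mathrm{Im}(\widetilde{f}_{\ast}))$ and $\phi$ is injective.

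\emph{Hopf refinement and main obstacle.} When $X$ is a suspension, $\widetilde{f}_{\ast}$ takes values in the primitives of $T(\widetilde{H}_{\ast}(X;R))$, so $(\mathrm{Im}(\widetilde{f}_{\ast}))$ is a Hopf ideal and $\phi$ becomes an isomorphism of Hopf algebras. The delicate point is the injectivity argument: the module factor $H_{\ast}(\Omega(\Sigma A\rtimes\Omega Y);R)$ a priori only describes generators of a one-sided $H_{\ast}(\Omega Y;R)$-submodule of $H_{\ast}(\Omega\Sigma X;R)$, and matching these generators with the two-sided ideal generated by $\mathrm{Im}(\widetilde{f}_{\ast})$ requires careful bookkeeping of how the right action of $H_{\ast}(\Omega Y;R)$ on the half-smash is absorbed into the multiplicative structure of the free tensor algebra through the retraction.
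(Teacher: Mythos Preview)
The paper does not prove this proposition; it quotes it from \cite[Proposition~10.1]{T1} and marks it with a~$\qqed$. So there is no in-paper argument to compare against, only the question of whether your sketch is viable.

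Your setup and surjectivity argument are correct: Bott--Samelson identifies $H_{\ast}(\Omega\Sigma X;R)$ with $T(\widetilde{H}_{\ast}(X;R))$, the null-homotopy of $h\circ f$ forces $(\Omega h)_{\ast}$ to kill $\mathrm{Im}(\widetilde{f}_{\ast})$, and the right homotopy inverse for $\Omega h$ makes $\phi$ onto.

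The injectivity step, however, has a genuine gap that you yourself flag but do not close. From the homotopy equivalence $\Omega\Sigma X\simeq\Omega Y\times\Omega(\Sigma A\rtimes\Omega Y)$ you only get an $R$-module (indeed a left $H_{\ast}(\Omega Y;R)$-module) isomorphism in homology; this splitting is not multiplicative. Your sentence ``Therefore $\ker((\Omega h)_{\ast})$ is the two-sided ideal generated by the positive-degree part of $H_{\ast}(\Omega(\Sigma A\rtimes\Omega Y);R)$'' is not justified: a one-sided module complement need not be a two-sided ideal, and even if it were you have not shown its generators lie in $(\mathrm{Im}(\widetilde{f}_{\ast}))$. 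The appeal to ``iterated James splittings'' tracing generators back into $(\mathrm{Im}(\widetilde{f}_{\ast}))$ is precisely the substance of the proof and cannot be left as a remark.

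What actually closes the gap (and is essentially the argument in \cite{T1}) is to use the explicit description of the map $\Sigma A\rtimes\Omega Y\to\Sigma X$ in the fibration: it is the composite $\Sigma A\rtimes\Omega Y\xrightarrow{f\rtimes 1}\Sigma X\rtimes\Omega Y\to\Sigma X$, where the second map is built from the evaluation $\Sigma\Omega Y\to Y$ and the section. Taking adjoints and reading off homology, the image of $\widetilde{H}_{\ast}(A\rtimes\Omega Y;R)$ in $T(\widetilde{H}_{\ast}(X;R))$ is exactly $\mathrm{Im}(\widetilde{f}_{\ast})\cdot H_{\ast}(\Omega Y;R)$, and then the multiplicative extension over $\Omega(\Sigma A\rtimes\Omega Y)$ together with the right $H_{\ast}(\Omega Y;R)$-action coming from the section forces $\ker((\Omega h)_{\ast})$ to equal the two-sided ideal $(\mathrm{Im}(\widetilde{f}_{\ast}))$. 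Alternatively one can run a Poincar\'{e}-series count from the module splitting against the expected series of the one-relator quotient, but either way the missing work is nontrivial and must be supplied.
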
 

The proof of Proposition~\ref{prop:loophominert} uses the Bott-Samelson Theorem, 
which says there is an algebra isomorphism 
$H_{\ast}(\Omega\Sigma X;R)\cong T(\rhlgy{X;R})$ 
that is an isomorphism of Hopf algebras if $X$ is a suspension. Berstein~\cite{Ber} 
generalised this to looped co-$H$-spaces: if $C$ is a co-$H$-space then there 
is an algebra isomorphism 
$H_{\ast}(\Omega C;R)\cong T(\Sigma^{-1}\rhlgy{C;R})$, 
where $\Sigma^{-1}\rhlgy{C;R}$ is $\rhlgy{C;R}$ shifted down one degree, and 
this is an isomorphism of Hopf algebras if $C$ is the suspension of a co-$H$-space. 
The argument proving Proposition~\ref{prop:loophominert} in~\cite{T1} goes through 
verbatim in the more general case of a looped co-$H$-space. Thus we obtain the 
following re-statement of Theorem~\ref{introloophlgy}.   

\begin{theorem} 
   \label{Minertloophlgy} 
   Let $M$ be an $(m-1)$-connected $n$-dimensional Poincar\'{e} Duality complex. 
   If $M\in\mathcal{A}$ then for any commutative ring $R$ with unit such that 
   $H_{\ast}(\overline{M};R)$ is a free-$R$-module, there is an algebra isomorphism 
   \[H_*(\Omega M;R) \cong 
       T(\Sigma^{-1}\widetilde{H}_*(\overline{M});R)/(\mathrm{Im}(\widetilde{f}_*)),\] 
   where $(\mathrm{Im}(\widetilde{f}_*))$ is the two sided ideal generated by 
   $\mathrm{Im}(\widetilde{f}_*)$. Moreover, if $\overline{M}$ is the suspension of a 
   co-$H$-space then this is an isomorphism of Hopf algebras.~$\qqed$ 
\end{theorem}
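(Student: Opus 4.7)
The plan is to reduce Theorem~\ref{Minertloophlgy} to Proposition~\ref{prop:loophominert}, modulo replacing the Bott--Samelson theorem with Berstein's theorem so as to accommodate the fact that $\overline{M}$ is only assumed to be a co-$H$-space rather than a suspension. Two ingredients are needed: (i) inertness of the attaching map for the top cell of $M$, and (ii) a tensor algebra identification of $H_{\ast}(\Omega\overline{M};R)$.

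For (i), I would appeal directly to Theorem~\ref{Minert}: since $M\in\mathcal{A}$, the loop map $\Omega i$ has a right homotopy inverse, where
\(i\colon\namedright{\overline{M}}{}{M}\)
is the skeletal inclusion. Thus the cofibration
\(\nameddright{S^{n-1}}{f}{\overline{M}}{i}{M}\)
has the same formal shape as the cofibration
\(\nameddright{\Sigma A}{f}{\Sigma X}{h}{Y}\)
in Proposition~\ref{prop:loophominert}, except that the middle term is a co-$H$-space rather than a suspension. For (ii), since $\overline{M}$ is a co-$H$-space and $\widetilde{H}_{\ast}(\overline{M};R)$ is a free $R$-module, Berstein's theorem provides an algebra isomorphism $H_{\ast}(\Omega\overline{M};R)\cong T(\Sigma^{-1}\widetilde{H}_{\ast}(\overline{M};R))$, which upgrades to a Hopf algebra isomorphism when $\overline{M}$ is itself the suspension of a co-$H$-space.

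With these two ingredients in place, I would re-run the proof of Proposition~\ref{prop:loophominert} from~\cite{T1} with Berstein's isomorphism in place of Bott--Samelson throughout. The right homotopy inverse of $\Omega i$, together with the splitting $\Omega\overline{M}\simeq\Omega M\times\Omega(S^{n-1}\rtimes\Omega M)$ obtained in Theorem~\ref{Minert}, lets one realise $H_{\ast}(\Omega M;R)$ as the quotient of $H_{\ast}(\Omega\overline{M};R)$ by the two-sided ideal generated by the image of $\widetilde{f}_{\ast}\colon H_{\ast}(S^{n-2};R)\to H_{\ast}(\Omega\overline{M};R)$. Substituting the Berstein identification then gives exactly the claimed quotient of the tensor algebra.

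The main obstacle in this approach is verifying that the argument of Proposition~\ref{prop:loophominert} really is robust under swapping Bott--Samelson for Berstein. The point that needs checking is that Berstein's isomorphism is compatible with the Pontryagin product on $\Omega\overline{M}$ and with the connecting maps of the cofibration in precisely the way the original quotient argument requires; once this compatibility is confirmed, the verbatim transfer produces both the algebra isomorphism and, under the additional hypothesis that $\overline{M}$ is a suspension of a co-$H$-space, the Hopf algebra refinement.
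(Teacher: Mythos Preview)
Your proposal is correct and mirrors the paper's argument essentially verbatim: the paper likewise reduces to Proposition~\ref{prop:loophominert} applied to the cofibration \(\nameddright{S^{n-1}}{f}{\overline{M}}{i}{M}\), invoking the inertness of $f$ (Theorem~\ref{Minert}) for the right homotopy inverse of $\Omega i$ and replacing Bott--Samelson with Berstein's co-$H$ version, asserting that the argument of Proposition~\ref{prop:loophominert} then goes through verbatim. Your write-up is in fact more explicit than the paper's two-line proof, particularly in flagging the compatibility of Berstein's isomorphism with the Pontryagin product as the point that needs checking.
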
 

\begin{proof} 
There is a homotopy cofibration 
\(\nameddright{S^{n-1}}{f}{\overline{M}}{}{M}\) 
where $f$ attaches the $n$-cell to $M$. By hypothesis, $M\in\mathcal{A}$, so 
$\overline{M}$ is a co-$H$-space. Now apply Proposition~\ref{prop:loophominert}. 
\end{proof}

\section{Initial examples} 
\label{sec:examples} 

The next two sections build up an array of examples to which Theorem~\ref{main} 
or its refinement in Corollary~\ref{cor:MbarwedgeofsphereandMoore} apply, as well 
as the two properties in Theorems~\ref{Minert} and~\ref{Minertloophlgy}. This 
section considers some initial examples that will then feed into the operations on 
Poincar\'{e} Duality complexes considered in the next section. 
\medskip 

\noindent 
\textbf{$(n-1)$-connected $(2n)$-dimensional Poincar\'{e} Duality complexes with $n \notin \{2,4,8\}$}. 
Let $M$ be an $(n-1)$-connected $(2n)$-dimensional Poincar\'{e} Duality complex 
for $n\geq 2$. Then $\overline{M}$ is homotopy equivalent to a wedge of copies of $S^{n}$. In particular, $\overline{M}$ is a co-$H$-space. If the rank of 
$H_{n}(M)$ is at least $2$ and $n \notin \{2,4,8\}$, then in~\cite{BT2} it is shown that there is a map 
\(\namedright{S^{n}}{}{M}\) 
with a left homotopy inverse. Thus $M$ satisfies the hypotheses of Corollary ~\ref{cor:MbarwedgeofsphereandMoore}. 
\medskip 

\noindent 
\textbf{$(n-1)$-connected $(2n+1)$-dimensional Poincar\'{e} Duality complexes}. 
Let $M$ be an $(n-1)$-connected $(2n+1)$-dimensional Poincar\'{e} Duality complex 
for $n\geq 2$. Then $\overline{M}$ is homotopy equivalent to a wedge of copies of $S^{n}$, $S^{n+1}$ 
and $(n+1)$-dimensional Moore spaces. In particular, $\overline{M}$ is a co-$H$-space. If the rank of 
$H_{n}(M)$ is at least $1$, then in~\cite{BT2} it is shown that there is a map 
\(\namedright{S^{n}}{}{M}\) 
with a left homotopy inverse. Thus $M$ satisfies the hypotheses of Corollary ~\ref{cor:MbarwedgeofsphereandMoore}. 
\medskip 

\noindent 
\textbf{Connected sums of products of two spheres}. 
Let $M$ be a connected sum of products of two spheres, 
$M=\displaystyle\conn_{i=1}^{d} (S^{m_{i}}\times S^{n-m_{i}})$. 
Then $\overline{M}\simeq\bigvee_{i=1}^{d} (S^{m_{i}}\vee S^{n-m_{i}})$, 
so $\overline{M}\in\mathcal{W}$. 
Let $m$ be the minimum of $\{m_{i},n-m_{i}\}_{i=1}^{d}$. Then there is a map 
\(\namedright{S^{m}}{s'}{M}\) 
that first includes $S^{m}$ into the wedge of spheres in $\overline{M}$ and then includes into $M$. 
{Within $M$, collapsing out all the spheres in $\overline{M}$ except the pair $S^{m}\vee S^{n-m}$ produces a map 
\(\namedright{M}{}{S^{m}\times S^{n-m}}\). 
Composing with the projection to $S^{m}$ then gives a left homotopy inverse for~$s'$. Thus $M$ satisfies the hypotheses of Corollary ~\ref{cor:MbarwedgeofsphereandMoore} when $2\leq m < n$.
\medskip

\noindent 
\textbf{Connected sums of $S^{n-m}$-bundles over $S^{m}$}.  
We begin with a lemma. 

\begin{lemma} 
   \label{spherebundleA} 
   Let $M$ be an $S^{n-m}$-bundle over $S^{m}$ with $2 \leq m < n-1$. 
   Then $M\in\mathcal{A}$. 
\end{lemma}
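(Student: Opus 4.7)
The plan is to verify the two conditions defining membership in $\mathcal{A}$: that $\overline{M}$ is a co-$H$-space, and that there is a map \(\namedright{S^{m}}{}{M}\) with a left homotopy inverse \(\namedright{M}{}{S^{m}}\). Along the way I will also confirm that $M$ is an $(m-1)$-connected Poincar\'{e} Duality complex.

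First I would observe that $M$ is a closed orientable $n$-manifold, hence a Poincar\'{e} Duality complex of dimension~$n$: orientability follows from $m\geq 2$, which forces the classifying map \(\namedright{S^{m-1}}{}{O(n-m+1)}\) to land in the identity component $SO(n-m+1)$. The long exact sequence in homotopy groups of \(\nameddright{S^{n-m}}{}{M}{}{S^{m}}\), combined with $m<n-m$, shows $\pi_{i}(M)\cong 0$ for $i\leq m-1$. For the left-invertible map, I would take the bundle projection $h'\colon\namedright{M}{}{S^{m}}$ and use obstruction theory to construct a section $s'\colon\namedright{S^{m}}{}{M}$: the sole obstruction lies in $H^{m}(S^{m};\pi_{m-1}(S^{n-m}))$, which vanishes because $m-1<n-m$ forces $\pi_{m-1}(S^{n-m})\cong 0$. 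Then $h'\circ s'=\mathrm{id}$ by construction.

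Next I would identify the homotopy type of $\overline{M}$. The bundle structure endows $M$ with a $CW$-decomposition containing a single cell in each of the dimensions $0,m,n-m,n$, so $\overline{M}$ is a two-cell complex $S^{m}\cup_{\beta}e^{n-m}$ for some $\beta\in\pi_{n-m-1}(S^{m})$. After cellular approximation the section gives a map \(\namedright{S^{m}}{}{\overline{M}}\), and the fibre inclusion $\iota\colon\namedright{S^{n-m}}{}{M}$ is already a cellular inclusion into $\overline{M}$. Together they produce a map $s'\vee\iota\colon\namedright{S^{m}\vee S^{n-m}}{}{\overline{M}}$. The Serre spectral sequence of the bundle shows that the only potentially nonzero differential leaving $E_{2}^{0,n-m}$ is the transgression to $E_{2}^{n-m+1,0}=H^{n-m+1}(S^{m})$, which vanishes since $n-m+1\neq m$; hence $\iota_{*}$ is an isomorphism on $H_{n-m}(M)$, and therefore on $H_{n-m}(\overline{M})$ since the remaining cell of $M$ is $n$-dimensional. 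The identity $h'\circ s'=\mathrm{id}$ forces $s'_{*}$ to be an isomorphism on $H_{m}$. Whitehead's theorem then delivers a homotopy equivalence $\overline{M}\simeq S^{m}\vee S^{n-m}$, which is a suspension and hence a co-$H$-space.

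The main obstacle I anticipate is the co-$H$-space condition, which amounts to proving that the attaching map $\beta$ is null-homotopic; this is not evident from the bundle data alone. The leverage comes from combining the section (whose existence itself relies on $m<n-m$) with the Serre spectral sequence to identify $H_{*}(\overline{M})$ and then invoking Whitehead's theorem.
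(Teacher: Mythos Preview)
Your proof is correct and follows essentially the same approach as the paper: verify that the bundle projection admits a section (the paper invokes the Hurewicz isomorphism where you invoke obstruction theory, but with $m<n-m$ these amount to the same observation that $\pi_{m-1}(S^{n-m})=0$), and then conclude $\overline{M}\simeq S^{m}\vee S^{n-m}$. The paper simply asserts this last homotopy equivalence, whereas you supply the details via the fibre inclusion, the Serre spectral sequence, and Whitehead's theorem; your extra care is justified but does not change the strategy.
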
 

\begin{proof} 
There is a homotopy fibration 
\(\nameddright{S^{n-m}}{}{M}{}{S^{m}}\)  
and $M$ is a Poincar\'{e} Duality complex. Since $m < n$, the Hurewicz isomorphism 
implies that there is a map 
\(\namedright{S^{m}}{}{M}\) 
that is a right homotopy inverse for the bundle map 
\(\namedright{M}{}{S^{m}}\). 
Also, $\overline{M}\simeq S^{m}\vee S^{n-m}$, so $\overline{M}$ is a co-$H$-space. 
Thus $M\in\mathcal{A}$. 
\end{proof} 

One possibility for $M$ is the product $S^{m}\times S^{n-m}$  
but there are also nontrivial bundles (for example, see~\cite{JW}). In these cases, the 
attaching map for the $n$-cell of $M$ is a map 
\(\namedright{S^{n-1}}{}{S^{m}\vee S^{n-m}}\) 
that is nontrivial when pinched to $S^{n-m}$. 

More generally, let $M=N_{1}\conn\cdots\conn N_{d}$ where each $N_{i}$ is an 
$n$-dimensional $S^{n-m_{i}}$-bundle over $S^{m_{i}}$ with $m_{i} < n-1$. Then 
$\overline{M}\simeq\bigvee_{i=1}^{d} (S^{m_{i}}\vee S^{n-m_{i}})$ and if $m$ is the 
minimum of $\{m_{i}\}_{i=1}^{d}$ then there is a map 
\(\namedright{S^{m}}{}{M}\) 
that has a left homotopy inverse given by the composite 
\(\nameddright{M}{c}{N_{i_{0}}}{q}{S^{m}}\), 
where $i_{0}$ is an index with $m_{i_{0}}$ achieving the minimum $m$, 
the map $c$ collapses the connected sum to the factor $N_{i_{0}}$, and $q$ 
is the bundle map. Thus $M\in\mathcal{A}$.

\begin{remark} 
A decomposition for $\Omega M$ when $M$ is $(n-1)$-connected, $(2n+1)$-dimensional and the rank of $H_{n}(M)$ is at least $1$ is known~\cite{Bas,BT2}, a decomposition for $\Omega M$ when $M$ is the connected sum of products of two spheres is also known~\cite{BT1}, and 
a decomposition for $\Omega M$ when $M$ is a connected sum of $S^{n-m}$-bundles 
over $S^{m}$ with $m<n-1$ can be deduced from a decomposition in~\cite{T1} of the 
loops on a connected sum where one factor is inert. However, the first two decompositions 
arise in a different context than the latter. Theorem~\ref{main} deals with all three uniformly 
and, in fact, gives a more refined decomposition in the case of the loops on a connected 
sum of sphere-bundles over spheres.
\end{remark} 

A completely new example is the following.
\medskip 

\noindent 
\textbf{Neighbourly moment-angle manifolds}.
Let $K$ be a simplicial complex on  the vertex set \mbox{$[m]=\{1,\ldots,m\}$}. For $\sigma\in K$, let 
\[(D^{2},S^{1})^{\sigma}=\prod_{i=1}^{m} Y_{i}\] 
where $Y_i =  D^2$ if $i \in \sigma$ and $Y_i = S^1$ if $i \notin \sigma$. The \textit{moment-angle complex} $\zk$ associated to $K$ is defined as 
\[\zk = \bigcup\limits_{\sigma \in K} (D^{2},S^{1})^{\sigma}.\] 
If $K$ is a triangulation of $S^{n}$ then $\zk$ is a manifold of dimension $m+n+1$ \cite[Theorem 4.1.4]{BP}. 

A simplicial complex is \emph{$k$-neighbourly} if any set of $k+1$ vertices spans a simplex. If $K$ is a $k$-neighbourly simplicial complex then $\zk$ is $(2k+2)$-connected \cite[Proposition 4.3.5 (b)]{BP}. A triangulation $K$ of $S^{2n+1}$ is called \emph{neighbourly} if $K$ is $n$-neighbourly.
\begin{proposition}
    \label{prop:neighbourlysphere}
    Let $K \neq \partial{\Delta^{2n+2}}$ be a neighbourly triangulation of $S^{2n+1}$ on $[m]$. Then there is a homotopy fibration 
   \[\nameddright{A\vee(B\wedge\Omega S^{2n+3})}{}{\zk}{h'}{S^{2n+3}}\] 
   where $A$ and $B$ are as in Theorem~\ref{main}, and this homotopy fibration splits after looping to give a homotopy equivalence 
   \[\Omega \zk\simeq\Omega S^{2n+3}\times\Omega(A\vee(B\wedge\Omega S^{2n+3})).\] Further, if $n=1$ then $A$ and $B$ are wedges of spheres that can be explicitly enumerated as in Corollary ~\ref{cor:MbarwedgeofsphereandMoore}.  
\end{proposition}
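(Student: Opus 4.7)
The plan is to verify the three hypotheses of Theorem~\ref{main} for $M=\zk$ and then invoke it (together with Corollary~\ref{cor:MbarwedgeofsphereandMoore} in the $n=1$ case).

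First I would record the basic invariants. Since $K$ triangulates $S^{2n+1}$, the moment-angle complex $\zk$ is a closed manifold of dimension $m+2n+2$, hence a simply-connected Poincar\'{e} Duality complex, and neighbourliness makes it $(2n+2)$-connected. In Theorem~\ref{main}'s notation this yields $m_{\mathrm{thm}}=2n+3$ and $n_{\mathrm{thm}}=m+2n+2$, so the required inequality $2\leq m_{\mathrm{thm}}<n_{\mathrm{thm}}-m_{\mathrm{thm}}$ becomes $m\geq 2n+5$. The excluded case $K=\partial\Delta^{2n+2}$ has $m=2n+3$ and $\zk\cong S^{4n+5}$, where the conclusion is trivial.

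Second I would construct the map $s'\colon S^{2n+3}\to\zk$ with a left homotopy inverse. Because $K\neq\partial\Delta^{2n+2}$ and $K$ is $n$-neighbourly, there exists a subset $J\subset[m]$ of cardinality $n+2$ that is not a face of $K$ although every proper subset of $J$ is. Thus the full subcomplex $K_J$ equals $\partial\Delta^{n+1}$, and $\mathcal{Z}_{K_J}\cong S^{2n+3}$. The inclusion of a full subcomplex yields a canonical embedding of polyhedral products $s'\colon\mathcal{Z}_{K_J}\hookrightarrow\zk$, and the coordinate projection $(D^2)^{[m]}\to(D^2)^{J}$ restricts to a retraction $h'\colon\zk\to\mathcal{Z}_{K_J}$ satisfying $h'\circ s'\simeq\mathrm{id}$.

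Third I would check that $\overline{\zk}$ is a co-$H$-space. Removing the interior of a top cell from the closed manifold $\zk$ gives $\overline{\zk}$, and at the level of polyhedral products this corresponds to deleting a single facet $\sigma$ of $K$, producing a homotopy equivalence $\overline{\zk}\simeq\mathcal{Z}_{K\setminus\sigma}$. For a neighbourly simplicial sphere $K$, the complex $K\setminus\sigma$ is Golod, and results in the moment-angle literature then force $\mathcal{Z}_{K\setminus\sigma}$ to be a suspension, in particular a co-$H$-space. With all three hypotheses confirmed, Theorem~\ref{main} supplies the asserted fibration and loop decomposition. For the final clause, when $n=1$ a sharper examination of $K\setminus\sigma$ (a simplicial $3$-ball whose moment-angle complex is understood in this range) shows $\overline{\zk}\in\mathcal{M}$, so Corollary~\ref{cor:MbarwedgeofsphereandMoore} enumerates $A$ and $B$ explicitly.

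The hard part is step three. The identification $\overline{\zk}\simeq\mathcal{Z}_{K\setminus\sigma}$ is essentially formal, but establishing the co-$H$-space structure on $\mathcal{Z}_{K\setminus\sigma}$ in general, and the wedge-of-spheres structure when $n=1$, rests on non-trivial combinatorial facts about Golodness of simplicial balls obtained by removing a facet of a sphere; these must be located and applied with care. A secondary subtlety is checking that the standing hypothesis $K\neq\partial\Delta^{2n+2}$, together with neighbourliness, really guarantees $m\geq 2n+5$ rather than only $m\geq 2n+4$; otherwise a supplementary hypothesis must be imposed to avoid the boundary case $n_{\mathrm{thm}}-m_{\mathrm{thm}}=m_{\mathrm{thm}}$.
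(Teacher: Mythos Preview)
Your overall strategy of verifying the hypotheses of Theorem~\ref{main} matches the paper's, and your second step---producing the retraction $\zk\to S^{2n+3}$ via the full subcomplex on a minimal missing $(n+1)$-face---is essentially the paper's argument (which routes through~\cite[Example~5.4]{T2} after first arguing by contradiction, via~\cite{IK}, that such a missing face must exist).

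Your third step, however, contains a genuine error: the claimed identification $\overline{\zk}\simeq\mathcal{Z}_{K\setminus\sigma}$ is false. Deleting a single facet $\sigma$ from $K$ removes from $\zk$ not an open top-dimensional disc but the product of an open polydisc with the torus $(S^{1})^{[m]\setminus\sigma}$; put differently, the natural polyhedral-product cell structure on $\zk$ has one top-dimensional piece for each facet of $K$, whereas the minimal $CW$-structure used to define $\overline{\zk}$ has a single top cell, and these do not coincide. For instance, taking $K$ to be the $4$-cycle $\partial\Delta^{1}*\partial\Delta^{1}$ gives $\zk\cong S^{3}\times S^{3}$ and hence $\overline{\zk}\simeq S^{3}\vee S^{3}$, whereas $\mathcal{Z}_{K\setminus\sigma}$ (the moment-angle complex of a path on four vertices) is homotopy equivalent to $(S^{3})^{\vee 3}\vee(S^{4})^{\vee 2}$. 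The paper avoids this route entirely and instead invokes~\cite[Theorem~6.6]{ST}, which states directly that for a neighbourly sphere one has $\overline{\zk}\simeq\bigvee_{I\notin K,\,I\neq[m]}\Sigma^{1+|I|}|K_{I}|$; this is manifestly a suspension, and when $K$ triangulates $S^{3}$ the same reference~\cite[Lemma~6.8]{ST} shows the wedge lies in $\mathcal{W}$. Incidentally, your caution about the dimension inequality is warranted: Theorem~\ref{main} requires $m>2n+4$, not merely $m>2n+2$, with the boundary case $m=2n+4$ realised precisely by $\partial\Delta^{n+1}*\partial\Delta^{n+1}$.
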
 

\begin{proof} 
 We will check that the hypotheses of Theorem~\ref{main} hold. 

 First, we claim that $K$ contains a minimal missing face of dimension $n+1$. Suppose not. As~$K$ is a neighbourly triangulation of $S^{2n+1}$ it is $n$-neighbourly. Having no minimal missing face of dimension $n+1$ implies that $K$ is $(n+1)$-neighbourly. But then ~\cite[Theorem 1.6]{IK} implies that $\zk$ is a co-H space. However, a moment-angle manifold $\zk$ where $K$ is a triangulation of a sphere is a co-$H$ space if and only if $K$ is the boundary of a simplex (c.f \cite[Lemma 6.3]{ST} for example), contradicting the hypothesis that $K\neq\partial\Delta^{2n+2}$. Thus $K$ has a minimal missing face of dimension~$n+1$. Therefore, by~\cite[Example 5.4]{T2} for example, there is a map $\zk \rightarrow S^{2n+3}$ that has a right homotopy inverse. 
 
Next, if $K$ is a neighbourly triangulation of $S^{2n+1}$ on $[m]$ then the dimension of $\zk$ 
 is $m+2n+2$ and its connectivity is $2n+2$. Since $2n+1 \geq 3$, $m \geq 4$ and so, $2n+3 < m+2n+2$.

Finally, in \cite[Theorem 6.6]{ST}, it was shown that if $K$ is a neighbourly sphere then there is a homotopy equivalence \[\overline{\zk} \simeq \bigvee \limits_{I \notin K,I \neq [m]} \Sigma^{1+|I|} |K_I|.\] In particular, $\overline{\zk}$ is a  suspension, so it is a co-$H$ space. Moreover, if $K$ is a triangulation of $S^{3}$ then $\overline{\mathcal{Z}_K}$ is homotopy equivalent to a wedge of spheres~\cite[Lemma 6.8]{ST}. 
 
 Hence all the hypotheses of Theorem ~\ref{main} hold, and applying it gives 
 the statement of the proposition.
\end{proof}

\begin{remark} 
The homotopy decomposition for $\Omega\zk$ in Proposition~\ref{prop:neighbourlysphere} when $n=1$ improves on~\cite[Theorem 1.2]{ST}, which showed that if $K$ is a neighbourly triangulation of $S^{3}$ then $\Omega\zk$ is homotopy equivalent to a product of spheres and loops on spheres, but the number and dimensions of the spheres involved were not explicitly enumerated. 
\end{remark}

\section{Connected sums and gyrations}
\label{sec:connsumgyration}

In this section we show that if $M\in\mathcal{A}$ then the connected sum and gyration operations produce new members of $\mathcal{A}$. 
\medskip

\noindent 
\textbf{Connected sums}.
Let $M$ and $N$ be simply-connected $n$-dimensional Poincar\'{e} Duality complexes. There are 
homotopy cofibrations 
\[\nameddright{S^{n-1}}{f}{\overline{M}}{i}{M}\qquad\qquad\nameddright{S^{n-1}}{g}{\overline{N}}{j}{N}\] 
where $f$ and $g$ are the attaching maps for the $n$-cells and $i$ and $j$ are skeletal inclusions. 
The connected sum $M\conn N$ is formed by removing the interior of an $n$-disc $D^{n}$ from each of $M$ 
and~$N$ and gluing $M\backslash (D^{n})^{\circ}$ and $N\backslash (D^{n})^{\circ}$ together 
along their boundaries. Notice that $M\conn N$ is a simply-connected $n$-dimensional Poincar\'{e} 
Duality complex. A topological description is as follows. Observe that $M\backslash (D^{n})^{\circ}$ 
and $N\backslash (D^{n})^{\circ}$ are homotopy equivalent to $\overline{M}$ and $\overline{N}$ respectively. 
Observe also that the $(n-1)$-skeleton of $M\conn N$ is homotopy equivalent to $\overline{M}\vee\overline{N}$. 
The attaching map for the $n$-cell of $M\conn N$ is given by the composite 
\(f+g\colon\nameddright{S^{n-1}}{\sigma}{S^{n-1}\vee S^{n-1}}{f\vee g}{\overline{M}\vee\overline{N}}\), 
where $\sigma$ is the comultiplication. Thus there is a homotopy cofibration 
\begin{equation} 
  \label{connsumcofib} 
  \nameddright{S^{n-1}}{f+g}{\overline{M}\vee\overline{N}}{}{M\conn N}. 
\end{equation}  

\begin{proposition} 
   \label{connsumcoH} 
   Let $M$ and $N$ be simply-connected $n$-dimensional Poincar\'{e} Duality complexes such 
   that $M\in\mathcal{A}$, $\overline{N}$ is a co-$H$-space, and the connectivity of $M$ is less than 
   or equal to the connectivity of $N$. Then $M\conn N\in\mathcal{A}$. 
\end{proposition}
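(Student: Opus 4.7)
The plan is to verify directly the two conditions defining the class $\mathcal{A}$ for the connected sum $M\conn N$, using the homotopy cofibration~(\ref{connsumcofib}).

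First, since the $(n-1)$-skeleton of $M\conn N$ is $\overline{M}\vee\overline{N}$, and a wedge of co-$H$-spaces is again a co-$H$-space, the co-$H$ condition for $\overline{M\conn N}$ follows immediately from the hypothesis that $\overline{M}$ (as $M\in\mathcal{A}$) and $\overline{N}$ are co-$H$-spaces.

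For the second condition, suppose $M$ is $(m-1)$-connected with $2\leq m<n-m$. The hypothesis on connectivities makes $\overline{N}$ at least $(m-1)$-connected, so $\overline{M}\vee\overline{N}$ is $(m-1)$-connected; since $2m<n$ forces $m+1\leq n$, attaching the $n$-cell via $f+g$ preserves this, so $M\conn N$ is $(m-1)$-connected with $2\leq m<n-m$. Since $M\in\mathcal{A}$, there is a map $s'\colon S^{m}\to M$ with left homotopy inverse $h'\colon M\to S^{m}$, and as in the opening of Section~\ref{sec:DecompofPDsusskel} the map $s'$ factors as $s'=i\circ s$ for some $s\colon S^{m}\to\overline{M}$. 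I would then define
\[s''\colon\nameddright{S^{m}}{s}{\overline{M}\hookrightarrow\overline{M}\vee\overline{N}}{}{M\conn N},\]
where the second map is the skeletal inclusion coming from~(\ref{connsumcofib}).

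The key construction for the left homotopy inverse is a collapse map $c\colon M\conn N\to M$. Let $p_{1}\colon\overline{M}\vee\overline{N}\to\overline{M}$ be the pinch to the first wedge summand. Since $f+g=(f\vee g)\circ\sigma$ with $\sigma$ the comultiplication on $S^{n-1}$, and $p_{1}\circ\sigma$ is homotopic to the identity, we get $p_{1}\circ(f+g)\simeq f$. Composing with $i$ gives $i\circ p_{1}\circ(f+g)\simeq i\circ f\simeq\ast$ by the cofibration $\nameddright{S^{n-1}}{f}{\overline{M}}{i}{M}$. Thus $i\circ p_{1}$ extends over the cofibration~(\ref{connsumcofib}) to a map $c\colon M\conn N\to M$, and by construction the restriction of $c$ to the subcomplex $\overline{M}\subset\overline{M}\vee\overline{N}\hookrightarrow M\conn N$ is homotopic to $i$. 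Setting $h''=h'\circ c$, the composite $h''\circ s''$ is homotopic to $h'\circ i\circ s=h'\circ s'\simeq 1_{S^{m}}$, proving the second condition.

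The only step requiring any care is the construction of $c$ and the verification of its restriction; everything else is formal. In particular, the main obstacle is identifying the correct null-homotopy of $i\circ p_{1}\circ(f+g)$ via the splitting property of the comultiplication on the attaching sphere, after which the extension over the cofibre is automatic.
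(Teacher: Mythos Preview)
Your proof is correct and follows essentially the same approach as the paper: verify that $\overline{M\conn N}\simeq\overline{M}\vee\overline{N}$ is a co-$H$-space, then build the required map $S^{m}\to M\conn N$ and its left inverse via the collapse $M\conn N\to M$. The paper simply asserts the existence of this collapse map (``collapsing $\overline{N}$ to a point inside $M\conn N$''), whereas you construct it more carefully as an extension over the cofibre~(\ref{connsumcofib}) using $p_{1}\circ(f+g)\simeq f$; this extra care is fine but not strictly necessary.
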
 

\begin{proof} 
Suppose that $M$ is $(m-1)$-connected. The homotopy cofibration~(\ref{connsumcofib}) implies 
$\overline{M\conn N}\simeq\overline{M}\vee\overline{N}$. In particular, as the connectivity of $M$ is 
less than or equal to that of $N$, $M\conn N$ is $(m-1)$-connected. We check that $M\conn N$ 
satisfies both conditions required to be in the class $\mathcal{A}$. 

First, $\overline{M}$ is a co-$H$-space since $M\in\mathcal{A}$ and, by hypothesis, $\overline{N}$ 
is a co-$H$-space. Thus $\overline{M}\vee\overline{N}$ is a co-$H$-space.  

Second, as $M\in\mathcal{A}$ there is a map 
\(\namedright{S^{m}}{s'}{M}\) 
with a left homotopy inverse 
\(\namedright{M}{h'}{S^{m}}\). 
Since $M$ is $n$-dimensional and by assumption $m<n$, 
$s'$ factors through the $(n-1)$-skeleton to give a map 
\(\namedright{S^{m}}{s}{\overline{M}}\). 
Collapsing $\overline{N}$ to a point inside $M\conn N$ gives a map 
\(\namedright{M\conn N}{}{M}\) 
whose restriction to~$\overline{M}$ is the inclusion of $\overline{M}$ into $M$.
Thus the composite 
\(\namedright{S^{m}}{s}{\overline{M}}\hookrightarrow\namedright{\overline{M}\vee\overline{N}} 
     {}{M\conn N}\) 
has a left homotopy inverse given  by 
\(\nameddright{M\conn N}{}{M}{h'}{S^{m}}\). 
\end{proof} 

It is notable in Proposition~\ref{connsumcoH} that $N$ does not have to satisfy 
condition~(2) for being in $\mathcal{A}$. This lets us inflate the examples to which 
Theorem~\ref{main} applies.  

\begin{corollary} 
   \label{connsumcoHcor} 
   Let $N$ be any $n$-dimensional Poincar\'{e} Duality complex such that $\overline{N}$ 
   is a co-$H$-space. Then for any $2\leq m < n-1$ we have 
   $(S^{m}\times S^{n-m})\conn N\in\mathcal{A}$. 
\end{corollary}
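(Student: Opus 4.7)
My plan is to deduce this as a direct application of Proposition~\ref{connsumcoH}, once I have verified that $S^{m}\times S^{n-m}$ itself belongs to the class~$\mathcal{A}$. So the argument naturally splits into two short steps: check the hypotheses for the first factor, then invoke the preservation result.

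For the first step, $S^{m}\times S^{n-m}$ is the trivial $S^{n-m}$-bundle over $S^{m}$, and the hypothesis $2\leq m<n-m$ is exactly the assumption of Lemma~\ref{spherebundleA}. Applying that lemma gives $S^{m}\times S^{n-m}\in\mathcal{A}$. (Equivalently, this is the $d=1$ case of the ``connected sums of products of two spheres'' example in Section~\ref{sec:examples}, where $\overline{S^{m}\times S^{n-m}}\simeq S^{m}\vee S^{n-m}$ is manifestly a co-$H$-space and the inclusion of the bottom sphere admits a left homotopy inverse via the projection onto $S^{m}$.)

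For the second step, I feed $M\coloneqq S^{m}\times S^{n-m}$ and the given $N$ into Proposition~\ref{connsumcoH}. The three hypotheses to check are: (i) $M\in\mathcal{A}$, provided by Step~1; (ii) $\overline{N}$ is a co-$H$-space, which is part of the standing hypothesis on $N$; and (iii) the connectivity of $M$ is at most that of $N$. For (iii), $S^{m}\times S^{n-m}$ is $(m-1)$-connected since $m<n-m$, and the implicit assumption on $N$ is precisely that it is at least $(m-1)$-connected. Proposition~\ref{connsumcoH} then yields $(S^{m}\times S^{n-m})\conn N\in\mathcal{A}$, which is the claim.

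There is no genuine obstacle: the substantive content has already been absorbed into Lemma~\ref{spherebundleA} (establishing the base case $S^{m}\times S^{n-m}\in\mathcal{A}$) and Proposition~\ref{connsumcoH} (establishing closure of $\mathcal{A}$ under connected sum with a Poincar\'e Duality complex whose top skeleton is a co-$H$-space). The only point worth flagging explicitly is the connectivity comparison in (iii), which forces the compatibility between $m$ and the connectivity of $N$ and which is what lets the $(m-1)$-connectivity of the connected sum propagate from the first factor.
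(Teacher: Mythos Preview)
Your proof is correct and follows essentially the same route as the paper: verify that $S^{m}\times S^{n-m}\in\mathcal{A}$ (the paper does this directly via $\overline{S^{m}\times S^{n-m}}\simeq S^{m}\vee S^{n-m}$ and the inclusion/projection pair, which is your parenthetical argument; you additionally cite Lemma~\ref{spherebundleA}), and then apply Proposition~\ref{connsumcoH}. Your explicit flagging of the connectivity comparison in~(iii) is in fact more careful than the paper, which invokes Proposition~\ref{connsumcoH} without comment on that hypothesis.
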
 
   
\begin{proof}    
Observe that the product $S^{m}\times S^{n-m}\in\mathcal{A}$ since 
$\overline{S^{m}\times S^{n-m}}\simeq S^{m}\vee S^{n-m}$ is a co-$H$-space 
and the inclusion 
\(\namedright{S^{m}}{}{S^{m}\times S^{n-m}}\) 
has a left homotopy inverse given by the projection  
\(\namedright{S^{m}\times S^{n-m}}{}{S^{m}}\). 
Thus Proposition~\ref{connsumcoH} implies that $(S^{m}\times S^{n-m})\conn N\in\mathcal{A}$. 
\end{proof}  

Similarly, as any $S^{n-m}$-bundle over $S^{m}$ with $m < n-1$ 
is in $\mathcal{A}$ by Lemma~\ref{spherebundleA}, we also have the following. 

\begin{corollary} 
   \label{connsumcoHcor2} 
   Let $N$ be any $n$-dimensional Poincar\'{e} Duality complex such that $\overline{N}$ 
   is a co-$H$-space. If $M$ is an $S^{n-m}$-bundle over $S^{m}$ with $m< n-1$ then 
   $M\conn N\in\mathcal{A}$.~$\qqed$  
\end{corollary}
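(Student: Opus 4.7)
The plan is immediate: combine Lemma~\ref{spherebundleA} with Proposition~\ref{connsumcoH}, in direct analogy with the proof of Corollary~\ref{connsumcoHcor}, which ran the same argument using $S^m \times S^{n-m}$ as the seed example in place of a general $S^{n-m}$-bundle over $S^m$. Lemma~\ref{spherebundleA} already establishes that $M \in \mathcal{A}$: the Hurewicz theorem supplies a map $S^m \to M$ that is a right (hence also left, after composing with the bundle projection) homotopy inverse for the bundle map $M \to S^m$, and $\overline{M} \simeq S^m \vee S^{n-m}$ is manifestly a co-$H$-space. So both conditions defining membership in $\mathcal{A}$ hold for $M$.

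The second hypothesis of Proposition~\ref{connsumcoH}, that $\overline{N}$ is a co-$H$-space, is part of the hypothesis of the corollary. The only thing requiring any genuine check is the connectivity comparison in Proposition~\ref{connsumcoH}: since $M$ is $(m-1)$-connected, one needs $N$ to be at least $(m-1)$-connected. This is automatic when $m = 2$ from simple connectivity, and is in any case the implicit setting under which the statement is intended to be read (parallel to how Corollary~\ref{connsumcoHcor} is phrased). Once this is in place, Proposition~\ref{connsumcoH} applies directly and yields $M \conn N \in \mathcal{A}$.

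There is no true obstacle: the mathematical content has already been fully absorbed into Lemma~\ref{spherebundleA} and Proposition~\ref{connsumcoH}, which is precisely why the statement appears with $\qqed$ in the excerpt — no separate argument is required beyond naming the two results one is invoking.
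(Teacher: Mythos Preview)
Your proposal is correct and matches the paper's approach exactly: the paper's entire argument is the sentence preceding the corollary, which invokes Lemma~\ref{spherebundleA} to get $M\in\mathcal{A}$ and then (implicitly) Proposition~\ref{connsumcoH}. Your remark about the connectivity comparison being an implicit hypothesis is apt---the paper silently assumes $N$ is at least $(m-1)$-connected, just as in Corollary~\ref{connsumcoHcor}.
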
 

We give an interesting example for each of these corollaries. 

%\begin{example} 
%\label{spherebundle} 
%Let $N$ be an $n$-dimensional sphere bundle over a sphere with section. This means that 
%there is a homotopy fibration 
%\(\nameddright{S^{\ell}}{}{N}{q}{S^{n-\ell}}\) 
%for some $\ell$ and the map $q$ has a right homotopy inverse. Assume that $\ell,n-\ell\geq 2$. 
%Observe that $N$ is a Poincar\'{e} Duality complex and the right homotopy inverse for $q$ 
%implies that $\overline{N}\simeq S^{\ell}\vee S^{n-\ell}$, so it is a co-$H$-space. Thus 
%Corollary~\ref{connsumcoH} implies that $(S^{m}\times S^{n-m})\conn N\in\mathcal{A}$ 
%for any $2\leq m\leq n-m$.  Further, as 
%$\overline{(S^{m}\times S^{n-m})\conn N}\simeq S^{m}\vee S^{n-m}\vee S^{\ell}\vee S^{n-\ell}$, 
%Corollary~\ref{cor:MbarwedgeofsphereandMoore} implies that there is a homotopy fibration 
%\[\nameddright{A\vee(B\wedge\Omega S^{m})}{}{(S^{m}\times S^{n-m})\conn N}{}{S^{m}}\] 
%where $A\simeq S^{n-m}\vee S^{\ell}\vee S^{n-\ell}$ and $B\simeq S^{\ell}\vee S^{n-\ell}$, 
%and this homotopy fibration splits after looping. 
%\end{example} 
%}

\begin{example} 
Let $W$ be the Wu manifold, $W=SU(3)/SO(3)$. Then $W$ is a simply-connected $5$-dimensional 
Poincar\'{e} Duality complex and $\overline{W}$ is the $3$-dimensional mod-$2$ Moore 
space $P^{3}(2)$. In particular, $\overline{W}$ is a co-$H$-space. Thus 
Corollary~\ref{connsumcoHcor} implies that $(S^{2}\times S^{3})\conn W\in\mathcal{A}$. Further, as 
$\overline{(S^{2}\times S^{3})\conn W}\simeq S^{2}\vee S^{3}\vee P^{3}(2)$ is in $\mathcal{M}$, 
Corollary~\ref{cor:MbarwedgeofsphereandMoore} implies there is a homotopy fibration 
\[\nameddright{A\vee(B\wedge\Omega S^{2})}{}{(S^{2}\times S^{3})\conn W}{}{S^{2}}\] 
where $A\simeq S^{3}\vee P^{3}(2)$ and $B\simeq P^{3}(2)$, and this homotopy fibration 
splits after looping. 

Similarly, there is a nontrivial $S^{3}$-bundle over $S^{2}$ denoted $S^{2}\widetilde{\times} S^{3}$. 
Corollary~\ref{connsumcoHcor2} implies that 
$(S^{2}\widetilde{\times} S^{3})\conn W\in\mathcal{A}$, and as 
$\overline{(S^{2}\widetilde{\times} S^{3})\conn W}\simeq S^{2}\vee S^{3}\vee P^{3}(2)$ 
is in $\mathcal{M}$, Corollary~\ref{cor:MbarwedgeofsphereandMoore} implies there is 
a homotopy fibration 
\[\nameddright{A\vee(B\wedge\Omega S^{2})}{}{(S^{2}\widetilde{\times }S^{3})\conn W}{}{S^{2}}\] 
where $A\simeq S^{3}\vee P^{3}(2)$ and $B\simeq P^{3}(2)$, and this homotopy fibration 
splits after looping. Consequently 
$\Omega((S^{2}\times S^{3})\conn W)\simeq\Omega((S^{2}\widetilde{\times} S^{3})\conn W)$. 
\end{example}

\noindent 
\textbf{Gyrations}. 
Let 
\(\tau\colon\namedright{S^{k-1}}{}{SO(n)}\) 
be a map. Using the standard action of $SO(n)$ on $S^{n-1}$, define the map 
\(\vartheta\colon\namedright{S^{n-1}\times S^{k-1}}{}{S^{n-1}\times S^{k-1}}\) 
by $\vartheta(a,t)=(\tau(t)\cdot a,t)$. Recall that 
\(\namedright{S^{n-1}}{f}{\overline{M}}\) 
is the attaching map for the top cell of $M$, and let 
\(i\colon\namedright{S^{k-1}}{}{D^{k}}\) 
be the standard inclusion. For any integer $k\geq 2$, the \emph{twisted gyration} 
$\mathcal{G}^{k}_{\tau}(M)$ is defined by the pushout 
\begin{equation} 
  \label{gyrationpo} 
  \diagram 
     S^{n-1}\times S^{k-1}\rto^-{1\times i}\dto^{(f\times 1)\circ\vartheta} & S^{n-1}\times D^{k}\dto \\ 
     \overline{M}\times S^{k-1}\rto & \mathcal{G}^{k}_{\tau}(M).  
  \enddiagram  
\end{equation} 
The twisted gyration is an $(n,k-1)$-surgery, implying that $\mathcal{G}^{k}_{\tau}(M)$ is an 
$(n+k-1)$-dimensional Poincar\'{e} Duality complex. If $\tau$ is the trivial map, 
denote the associated \emph{non-twisted gyration} by $\mathcal{G}^{k}_{0}(M)$. The 
non-twisted gyration plays an important role in determining the diffeomorphism types of 
certain moment-angle manifolds in toric topology~\cite{GLdM} while the twisted gyration 
plays an important role in classifying circle bundles over manifolds~\cite{D}. 

We will show that the gyration for any choice of twisting preserves the property of being in $\mathcal{A}$. This makes use of the identification of $\overline{\mathcal{G}^{k}_{\tau}(M)}$ 
by Basu and Ghosh~\cite[Proposition 6.9]{BG}.  

\begin{lemma} 
   \label{BasuGhosh} 
   For all $k\geq 1$ and all $\tau$, there is a homotopy equivalence 
   $\overline{\mathcal{G}^{k}_{\tau}(M)}\simeq\overline{M}\rtimes S^{k-1}$.~$\qqed$ 
\end{lemma}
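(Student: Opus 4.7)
The plan is to extract $\overline{\mathcal{G}^{k}_{\tau}(M)}$ from the pushout defining $\mathcal{G}^{k}_{\tau}(M)$ by identifying and removing its unique top cell. I would give $S^{n-1}$ the CW structure $\{\ast\} \cup e^{n-1}$ and $D^{k}$ the CW structure $\{\ast\} \cup e^{k-1} \cup e^{k}$ whose $(k-1)$-skeleton is $\partial D^{k} = S^{k-1}$. Then $S^{n-1} \times D^{k}$ has a unique top cell $e^{n-1} \times e^{k}$ of dimension $n+k-1$, and its $(n+k-2)$-skeleton is $(S^{n-1} \times S^{k-1}) \cup (\{\ast\} \times D^{k})$. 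Since $\overline{M} \times S^{k-1}$ has dimension at most $n+k-2$, this cell is also the unique top cell of the pushout $\mathcal{G}^{k}_{\tau}(M)$, so $\overline{\mathcal{G}^{k}_{\tau}(M)}$ is the pushout with $S^{n-1} \times D^{k}$ replaced by its $(n+k-2)$-skeleton.

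Next I would simplify this smaller pushout. The piece $S^{n-1} \times S^{k-1}$ is already glued into $\overline{M} \times S^{k-1}$ via $(f \times 1) \circ \vartheta$ and so contributes nothing new; what remains is $\overline{M} \times S^{k-1}$ with the single $k$-disc $\{\ast\} \times D^{k}$ attached along its boundary via the composite
\[
\{\ast\} \times S^{k-1} \hookrightarrow S^{n-1} \times S^{k-1} \xrightarrow{(f \times 1) \circ \vartheta} \overline{M} \times S^{k-1},
\]
a section $g(t) = (f(\tau(t) \cdot \ast), t)$. Thus $\overline{\mathcal{G}^{k}_{\tau}(M)}$ is the mapping cone of $g$. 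Independently, $\overline{M} \rtimes S^{k-1}$, being the cofiber of $\{\ast\} \times S^{k-1} \hookrightarrow \overline{M} \times S^{k-1}$, is the mapping cone of the basepoint section $j(t) = (\ast, t)$, and the lemma reduces to the homotopy $g \simeq j$.

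I expect this final homotopy to be the main obstacle. Both $g$ and $j$ are sections of the projection to $S^{k-1}$ and they differ only in the first coordinate by the composite $f \circ \mathrm{ev}_{\ast} \circ \tau$, where $\mathrm{ev}_{\ast} \colon O(n) \to S^{n-1}$ is evaluation at the basepoint. Because $\mathrm{ev}_{\ast} \circ \tau$ need not itself be null-homotopic in $S^{n-1}$, the required homotopy must be produced in $\overline{M} \times S^{k-1}$ rather than inside $S^{n-1}$: one should either deform $\vartheta$ through self-homotopy equivalences of $S^{n-1} \times S^{k-1}$ that trivialize after post-composition with $f \times 1$, or leverage a structural property of the attaching map $f$ of the top cell of $M$ forcing $f \circ \mathrm{ev}_{\ast} \circ \tau$ to be null-homotopic in $\overline{M}$ for every choice of twist. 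Once $g \simeq j$ is established, homotopy invariance of mapping cones delivers the desired equivalence $\overline{\mathcal{G}^{k}_{\tau}(M)} \simeq \overline{M} \rtimes S^{k-1}$.
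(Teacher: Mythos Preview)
The paper does not prove this lemma; it is quoted from Basu--Ghosh \cite[Proposition~6.9]{BG}, as the terminal $\qqed$ indicates, so there is no in-paper argument to compare against.

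Your reduction is correct up to the last step: with the product CW structures you chose, the unique $(n+k-1)$-cell of $\mathcal{G}^{k}_{\tau}(M)$ is $e^{n-1}\times e^{k}$, and removing it identifies $\overline{\mathcal{G}^{k}_{\tau}(M)}$ with $(\overline{M}\times S^{k-1})\cup_{g} D^{k}$ for $g(t)=(f(\tau(t)\cdot\ast),t)$. Likewise $\overline{M}\rtimes S^{k-1}$ is the mapping cone of the basepoint section $j(t)=(\ast,t)$. The gap is precisely where you flag it, and it is genuine: the homotopy $g\simeq j$ you aim for is \emph{false} in general, so the proof cannot be closed along that line.

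For a concrete failure, take $M=\mathbb{C}P^{2}$ (so $n=4$, $\overline{M}=S^{2}$, $f=\eta$), take $k=4$, and let $\tau\colon S^{3}=Sp(1)\hookrightarrow SO(4)\subset O(4)$ act by left quaternionic multiplication with basepoint $\ast=1\in S^{3}$. Then $\tau(t)\cdot\ast=t$, so $g=(\eta,\mathrm{id})$ and $j=(\ast,\mathrm{id})$ as maps $S^{3}\to S^{2}\times S^{3}$; in $\pi_{3}(S^{2}\times S^{3})\cong\mathbb{Z}\oplus\mathbb{Z}$ these are $(1,1)$ and $(0,1)$, hence $g\not\simeq j$. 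This directly refutes your second suggested fix (a ``structural property of $f$'' forcing $f\circ\mathrm{ev}_{\ast}\circ\tau\simeq\ast$ for every $\tau$), and your first suggestion is not stated sharply enough to carry weight. The lemma is nevertheless true in this example---one checks both mapping cones are $S^{2}\vee S^{5}$, using that $[\iota_{2},\eta]=0$ in $\pi_{4}(S^{2})$ since suspension $\pi_{4}(S^{2})\to\pi_{5}(S^{3})$ is an isomorphism and Whitehead products suspend trivially---so the equivalence of cofibres holds for a reason strictly more subtle than $g\simeq j$. You need either a self-equivalence of $\overline{M}\times S^{k-1}$ carrying the homotopy class of $g$ to that of $j$, or a different model for the skeleton altogether; the argument in \cite{BG} is the place to look.
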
 

It will be convenient for applications to consider the two properties of being in $\mathcal{A}$ separately. 

\begin{lemma} 
   \label{pregyrationcoH} 
   Let $M$ be a simply-connected $n$-dimensional Poincar\'{e} Duality complex such that 
   $\overline{M}$ is a co-$H$-space. Then $\overline{\mathcal{G}^{k}_{\tau}(M)}$ is a co-$H$-space 
   for any $k\geq 2$ and any $\tau$. 
\end{lemma}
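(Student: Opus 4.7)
The plan is to combine Lemma~\ref{BasuGhosh} with a general fact already noted in the paper (in the proof of Lemma~\ref{gammaclass}): if $X$ is a co-$H$-space and $Y$ is any space then the right half-smash $X \rtimes Y$ is again a co-$H$-space. This reduces the lemma essentially to citing these two ingredients.

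First I would invoke Lemma~\ref{BasuGhosh} to obtain a homotopy equivalence
\[\overline{\mathcal{G}^{k}_{\tau}(M)} \simeq \overline{M} \rtimes S^{k-1}\]
valid for every $k \geq 1$ and every $\tau$. Since being a co-$H$-space is a homotopy invariant, it suffices to show that $\overline{M} \rtimes S^{k-1}$ is a co-$H$-space.

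Next I would apply the general principle that the right half-smash of a co-$H$-space with any space is again a co-$H$-space. Concretely, one may explain this by noting that $X \rtimes Y \simeq X \vee (X \wedge Y)$ whenever $X$ is a co-$H$-space, so a comultiplication on $\overline{M}$ transfers to one on $\overline{M} \rtimes S^{k-1}$ via this splitting (the wedge of a co-$H$-space with any space is a co-$H$-space). Applying this with $X = \overline{M}$, which is a co-$H$-space by hypothesis, and $Y = S^{k-1}$ completes the argument. No hypothesis on $k \geq 2$ or on $\tau$ is needed beyond what appears in Lemma~\ref{BasuGhosh}, and there is no real obstacle in the argument — the content is entirely encapsulated in Lemma~\ref{BasuGhosh}.
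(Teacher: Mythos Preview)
Your proof is correct and follows essentially the same route as the paper: invoke Lemma~\ref{BasuGhosh} and then the well-known fact that $X\rtimes Y$ is a co-$H$-space whenever $X$ is. One small slip: your parenthetical ``the wedge of a co-$H$-space with any space is a co-$H$-space'' is false in general (e.g.\ $S^{2}\vee\mathbb{C}P^{2}$); the correct reason $X\vee(X\wedge Y)$ is co-$H$ is that $X\wedge Y$ inherits a comultiplication from $X$, so both wedge summands are co-$H$.
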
 

\begin{proof} 
By Lemma~\ref{BasuGhosh}, there is a homotopy equivalence 
$\overline{\mathcal{G}^{k}_{\tau}(M)}\simeq\overline{M}\rtimes S^{k-1}$. By hypothesis, 
$\overline{M}$ is a co-$H$-space. It is well known that if $A$ is a co-$H$-space then 
for any space $B$, the half-smash $A\rtimes B$ is also a co-$H$-space. Thus 
$\overline{M}\rtimes S^{k-1}$ and hence $\overline{\mathcal{G}^{k}_{\tau}(M)}$ is a co-$H$-space. 
\end{proof} 

\begin{lemma} 
   \label{gyrationspheremap} 
   Let $M$ be an $(m-1)$-connected, $n$-dimensional Poincar\'{e} Duality complex with $m < n$, such that 
   there is a map $S^m \rightarrow M$ with a left homotopy inverse. Then  
   for any $k\geq 2$ and any $\tau$, there is a map $S^m \rightarrow \mathcal{G}_{\tau}^k(M)$ with a left homotopy inverse. 
\end{lemma}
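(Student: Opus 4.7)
The plan is to exploit the pushout description (\ref{gyrationpo}) of $\mathcal{G}^k_\tau(M)$ to construct both the map $S^m\to\mathcal{G}^k_\tau(M)$ and its candidate left homotopy inverse directly via the universal property. Since $m<n$, I first factor the assumed map $s'\colon S^m\to M$ through the $(n-1)$-skeleton as $s\colon S^m\to\overline{M}$, and set $h=h'\circ i_0\colon\overline{M}\to S^m$, where $i_0\colon\overline{M}\hookrightarrow M$ is the skeletal inclusion; then $h$ is a left homotopy inverse of $s$. Fixing a basepoint $\ast\in S^{k-1}$, the desired map $\bar s\colon S^m\to\mathcal{G}^k_\tau(M)$ will be the composite
\[
S^m\xrightarrow{s}\overline{M}\xrightarrow{m\mapsto(m,\ast)}\overline{M}\times S^{k-1}\longrightarrow\mathcal{G}^k_\tau(M),
\]
where the final arrow is the inclusion coming from the pushout.

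To build a left homotopy inverse $H'\colon\mathcal{G}^k_\tau(M)\to S^m$, I will define it on $\overline{M}\times S^{k-1}$ as $h\circ\pi_1$ and then extend it compatibly over $S^{n-1}\times D^k$, thereby assembling $H'$ from the pushout. Compatibility forces the extension to agree on $S^{n-1}\times S^{k-1}$ with the map $g(a,t)=h(f(\tau(t)\cdot a))$. The key observation is that $i_0\circ f$ is null homotopic, since $f$ is the attaching map of the top $n$-cell of $M$, and consequently $h\circ f=h'\circ i_0\circ f$ is null homotopic. Choosing a null homotopy $F\colon S^{n-1}\times I\to S^m$ of $h\circ f$, the formula $G(a,t,s)=F(\tau(t)\cdot a,s)$ yields a null homotopy of $g$. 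Interpreting $D^k$ as the cone on $S^{k-1}$, this null homotopy defines the required extension $\tilde g\colon S^{n-1}\times D^k\to S^m$, and the pushout then produces $H'$.

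Finally, I verify $H'\circ\bar s\simeq 1_{S^m}$: tracing through the construction, this composite equals $h\circ\pi_1\circ(m\mapsto(m,\ast))\circ s=h\circ s$, which is homotopic to the identity by the choice of $h$. The main obstacle is showing that $g$ is null homotopic and that this null homotopy yields a well-defined extension to $S^{n-1}\times D^k$, but both issues are resolved by the vanishing of $h\circ f$ and the standard cone construction. Note that the argument works uniformly for every $k\geq 2$ and every twisting map $\tau$, so the hypothesis imposes no constraint on $\tau$.
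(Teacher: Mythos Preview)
Your argument is correct and takes a somewhat different, more elementary route than the paper. The paper first invokes Lemma~\ref{BasuGhosh} to identify $\overline{\mathcal{G}^{k}_{\tau}(M)}\simeq\overline{M}\rtimes S^{k-1}$, and then uses the identification of the attaching map $\phi_{\tau}$ from~\cite[Lemma 3.2]{CT} to show that the composite $\pi\circ(r'\rtimes 1)\circ\phi_{\tau}$ is null homotopic, whence $\pi\circ(r'\rtimes 1)$ extends over the cofibre $\mathcal{G}^{k}_{\tau}(M)$. You instead work directly with the pushout square~(\ref{gyrationpo}): you build the retraction by specifying $h\circ\pi_{1}$ on $\overline{M}\times S^{k-1}$ and using the null homotopy $F$ of $h\circ f$ (pulled back along $\vartheta$) to extend over $S^{n-1}\times D^{k}\cong S^{n-1}\times CS^{k-1}$. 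Both arguments hinge on the same key fact, namely that $h\circ f\simeq\ast$ because $i_{0}\circ f\simeq\ast$, but your version avoids importing the external results of Basu--Ghosh and Chenery--Theriault, at the cost of a slightly more hands-on gluing. The paper's approach, on the other hand, makes the skeletal structure of $\mathcal{G}^{k}_{\tau}(M)$ explicit, which is useful elsewhere in the section.
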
 

\begin{proof} 
Consider the homotopy cofibration \[S^{n-1} \xrightarrow{f} \overline{M} \xrightarrow{i} M,\] where $f$ is the attaching map of the top cell. By hypothesis, there is a map $s:S^m \rightarrow M$ which has a left homotopy inverse $r:M \rightarrow S^m$. Since $m < n$, the map $s$ factors through the $(n-1)$-skeleton $\overline{M}$ via a map $s':S^m \rightarrow \overline{M}$. Further, the composite \[r':\overline{M} \xrightarrow{i} M \xrightarrow{r} S^m\] is a left homotopy inverse for $s'$. Note that the composite $r' \circ f$ is null homotopic. 

Now consider the homotopy fibration \[S^{n+k-2} \xrightarrow{\phi_\tau} \overline{M} \rtimes S^{k-1} \rightarrow \mathcal{G}_{\tau}^k(M),\] where $\phi_\tau$ is the attaching map of the top cell. In \cite[Lemma 3.2]{CT} the map $\phi_\tau$ was identified as the composite \[\phi_\tau:S^{n+k-2} \xrightarrow{j} S^{n-1} \rtimes S^{k-1} \xrightarrow{t'}S^{n-1} \rtimes S^{k-1} \xrightarrow{f \rtimes 1} \overline{M} \rtimes S^{k-1},\] where $j$ is the restriction of the homotopy equivalence $S^{n-1} \vee S^{n+k-2} \rightarrow S^{n-1} \rtimes S^{k-1}$ to $S^{n+k-2}$, and $t'$ is a map depending on the choice of $\tau$. Since $r' \circ f$ is null homotopic, the composite \[S^{n-1} \rtimes S^{k-1} \xrightarrow{f \rtimes 1} \overline{M} \rtimes S^{k-1} \xrightarrow{r' \rtimes 1} S^m \rtimes S^{k-1} \xrightarrow{\pi} S^m,\] where $\pi$ is the projection, is null homotopic. Hence, $\pi \circ (r' \rtimes 1)\circ \phi_\tau$ is null homotopic, implying that there is an extension of $\pi \circ (r' \rtimes 1)$ to a map $r'':\mathcal{G}_{\tau}^k(M) \rightarrow S^m$. The right homotopy inverses for $\pi$ and~$r'$ imply that $r''$ has a right homotopy inverse. Hence there is a map $s'':S^m \rightarrow \mathcal{G}_\tau^k(M)$ with a left homotopy inverse.
\end{proof} 

As a consequence, combining \cite[Theorem 1.1]{T2} and Lemma~\ref{gyrationspheremap}, we significantly extend the known 
examples of gyrations satisfying the inertness property proved in~\cite{Hu}.

\begin{theorem}
   Let $M$ be an $(m-1)$-connected, $n$-dimensional Poincar\'{e} Duality complex such that 
   there is a map $S^m \rightarrow M$ with a left homotopy inverse. Then  
   for any $k\geq 2$ and any $\tau$, the attaching map of the top cell of $\mathcal{G}_{\tau}^k(M)$ is inert.  $\qqed$
\end{theorem}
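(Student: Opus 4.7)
The plan is to obtain the theorem as a direct application of Theorem~1.1 of \cite{T2}, which asserts that any $(m'-1)$-connected $n'$-dimensional Poincar\'e Duality complex $X$ admitting a map $S^{m'}\to X$ with a left homotopy inverse has inert top-cell attaching map (provided $m'<n'$). The task therefore reduces to verifying these three hypotheses for the space $\mathcal{G}^{k}_{\tau}(M)$, with the hard work already packaged in Lemma~\ref{gyrationspheremap}.

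First I would record that $\mathcal{G}^{k}_{\tau}(M)$ is a Poincar\'e Duality complex of dimension $n+k-1$; this was noted directly after~(\ref{gyrationpo}) as a consequence of the $(n,k-1)$-surgery description. Second, I would identify its connectivity. By Lemma~\ref{BasuGhosh}, the $(n+k-2)$-skeleton of $\mathcal{G}^{k}_{\tau}(M)$ is homotopy equivalent to $\overline{M}\rtimes S^{k-1}$. Since $M$ is $(m-1)$-connected, so is $\overline{M}$, and smashing or half-smashing with $S^{k-1}$ for $k\geq 2$ does not decrease connectivity, so $\overline{M}\rtimes S^{k-1}$ is $(m-1)$-connected and hence so is $\mathcal{G}^{k}_{\tau}(M)$. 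The dimensional inequality $m<n+k-1$ also holds, since $m\leq n$ and $k\geq 2$, so $m\leq n<n+k-1$.

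Third, I would invoke Lemma~\ref{gyrationspheremap} to produce a map $S^{m}\to\mathcal{G}^{k}_{\tau}(M)$ with a left homotopy inverse; this is exactly the remaining hypothesis of Theorem~1.1 of \cite{T2}. Applying that theorem to $\mathcal{G}^{k}_{\tau}(M)$ then yields inertness of the attaching map of its top cell, which is the statement to be proved.

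The only genuinely nontrivial content is the construction of the map $S^{m}\to\mathcal{G}^{k}_{\tau}(M)$ with a left homotopy inverse, which required the identification of the top-cell attaching map $\phi_{\tau}$ via \cite[Lemma~3.2]{CT} together with the observation that the left homotopy inverse $r'\colon\overline{M}\to S^{m}$ annihilates the attaching map of the top cell of $M$. Since that step has already been carried out in Lemma~\ref{gyrationspheremap}, the proof of the present theorem is essentially a one-line assembly, and I do not anticipate any further obstacle.
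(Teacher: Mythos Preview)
Your proposal is correct and matches the paper's approach exactly: the theorem is stated with a $\qqed$ and is obtained by combining Lemma~\ref{gyrationspheremap} with \cite[Theorem~1.1]{T2}, precisely as you outline. The additional checks you spell out (dimension, connectivity via Lemma~\ref{BasuGhosh}, and the inequality $m<n+k-1$) are the routine verifications implicit in the paper's one-line deduction.
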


Combining Lemmas~\ref{pregyrationcoH} and~\ref{gyrationspheremap}, while noting that $M$ and $\mathcal{G}^{k}_{\tau}(M)$ have the same connectivity by Lemma~\ref{BasuGhosh}, we obtain the following.

\begin{theorem} 
   \label{gyrationcoH} 
   Let $M$ be a simply-connected $n$-dimensional Poincar\'{e} Duality complex such that 
   \mbox{$M\in\mathcal{A}$}. Then for any $k\geq 2$ and any $\tau$, the gyration 
   $\mathcal{G}^{k}_{\tau}(M)$ is a simply-connected $(n+k-1)$-dimensional Poincar\'{e} 
   Duality complex with $\mathcal{G}^{k}_{\tau}(M)\in\mathcal{A}$.~$\qqed$ 
\end{theorem}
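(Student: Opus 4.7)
The plan is to assemble Theorem~\ref{gyrationcoH} directly from the two preceding lemmas together with the structural facts about gyrations already recorded earlier in the section. Essentially, all the genuine topological work has been done in Lemmas~\ref{pregyrationcoH} and~\ref{gyrationspheremap}; what remains is to confirm that the output space still meets the numerical hypotheses defining~$\mathcal{A}$.

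First I would record that $\mathcal{G}^{k}_{\tau}(M)$ is simply-connected and is an $(n+k-1)$-dimensional Poincar\'e duality complex. This was already noted in the paragraph introducing the pushout~(\ref{gyrationpo}): twisted gyration is an $(n,k-1)$-surgery, so the Poincar\'e duality structure is automatic, and simple-connectivity is a standard van Kampen computation on the pushout (for $k\geq 3$ both pieces are simply-connected over a simply-connected gluing region, and for $k=2$ the loop generated by the $S^{1}$ factor is killed by attaching the $D^{2}\times S^{n-1}$ piece).

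Next I would verify the two conditions defining membership in $\mathcal{A}$. Condition~(1) is immediate from Lemma~\ref{pregyrationcoH}, which shows $\overline{\mathcal{G}^{k}_{\tau}(M)}$ is a co-$H$-space for every $k\geq 2$ and every $\tau$. For condition~(2) I first need to pin down the connectivity of $\mathcal{G}^{k}_{\tau}(M)$. Since $M\in\mathcal{A}$ is $(m-1)$-connected with $2\leq m<n-m$, Lemma~\ref{BasuGhosh} gives $\overline{\mathcal{G}^{k}_{\tau}(M)}\simeq\overline{M}\rtimes S^{k-1}$, which is again $(m-1)$-connected, and attaching the top cell does not decrease connectivity. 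Thus $\mathcal{G}^{k}_{\tau}(M)$ is $(m-1)$-connected with the same $m$. The required inequality $2\leq m<(n+k-1)-m$ then follows: the left inequality is inherited from $M\in\mathcal{A}$, and the right inequality is $2m<n+k-1$, which holds since $2m<n$ by assumption and $k\geq 2$. With the numerical hypothesis in hand, Lemma~\ref{gyrationspheremap} directly produces a map $S^{m}\to\mathcal{G}^{k}_{\tau}(M)$ with a left homotopy inverse, establishing condition~(2).

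The main obstacle is not really a hard step but a matching one: making sure the integer~$m$ that witnesses membership of $M$ in $\mathcal{A}$ is still the correct witness after gyration, and that the strict inequality $2m<\dim -m$ continues to hold when the dimension jumps from $n$ to $n+k-1$. Once these bookkeeping checks are made and the two lemmas are cited in sequence, the theorem follows with no further work.
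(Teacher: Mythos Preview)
Your proposal is correct and follows essentially the same route as the paper: the paper's entire proof is the one-line observation that the result follows by combining Lemmas~\ref{pregyrationcoH} and~\ref{gyrationspheremap} together with the fact, from Lemma~\ref{BasuGhosh}, that $M$ and $\mathcal{G}^{k}_{\tau}(M)$ have the same connectivity. Your version simply unpacks the numerical bookkeeping (checking $2\leq m<(n+k-1)-m$) and the simple-connectivity assertion that the paper leaves implicit.
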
 

\begin{remark} 
In~\cite{HT2}, it is shown that for any simply-connected Poincar\'{e} Duality complex $M$ 
there is a homotopy equivalence 
$\Omega\mathcal{G}^{k}_{0}(M)\simeq\Omega\overline{M}\times\Omega\Sigma^{k} F$, 
where $F$ is the homotopy fibre of the attaching map 
\(\namedright{S^{n-1}}{}{\overline{M}}\) 
for the $n$-cell of $M$. The space $F$ may not be explicitly described. It is also shown that the same decomposition holds for $\Omega\mathcal{G}^{k}_{\tau}(M)$ with nontrivial $\tau$ after localisation away from a finite set of primes depending on the image of the $J$-homomorphism. However, 
if $M\in\mathcal{A}$ then there are significant improvements. Theorem~\ref{gyrationcoH} implies that $\mathcal{G}^{k}_{\tau}(M)\in\mathcal{A}$, and therefore Theorem~\ref{main} can be applied. This gives an integral homotopy decomposition for $\Omega\mathcal{G}^{k}_{\tau}(M)$ for all $\tau$ and one in which the factors are more explicitly described.  
\end{remark} 

%\begin{example} 
%Let $M$ be an $S^{n-m}$-bundle over $S^{m}$ with $m\leq n-m$, which is in $\mathcal{A}$ 
%by Lemma~\ref{spherebundleA}. Then Theorem~\ref{gyrationcoH} implies that 
%$\mathcal{G}^{k}_{0}(M)\in\mathcal{A}$ for any~$k\geq 2$. In particular, the attaching 
%map for the top cell of each $\mathcal{G}^{k}_{0}(M)$ is inert, extending the known 
%examples of gyrations satisfying this property proved in~\cite{Hu}. 
%\end{example} 

Mixing and iterating Proposition~\ref{connsumcoH} and Theorem ~\ref{gyrationcoH} leads to more 
examples of Poincar\'{e} Duality complexes in $\mathcal{A}$.

\begin{example} Let $M$ and $N$ be simply-connected $n$-dimensional 
Poincar\'{e} Duality complexes. Suppose that $M\in\mathcal{A}$ and $\overline{N}$ 
is a co-$H$-space. Let $\tau,\omega:S^{k-1}\rightarrow SO(n)$. Then $\mathcal{G}^{k}_{\tau}(M)\in\mathcal{A}$ by Theorem~\ref{gyrationcoH} 
and $\mathcal{G}^{k}_{\omega}(N)$ has the property that $\overline{\mathcal{G}^{k}_{\omega}(N)}$ 
is a co-$H$-space by Lemma~\ref{pregyrationcoH}. Both gyrations are $(n+k-1)$-dimensional, 
so their connected sum exists, and Proposition~\ref{connsumcoH} implies that 
$\mathcal{G}^{k}_{\tau}(M)\conn\mathcal{G}^{k}_{\omega}(N)\in\mathcal{A}$.  
\end{example} 

\begin{example} 
Let $M$ be a simply-connected Poincar\'{e} Duality complex in $\mathcal{A}$. Then 
for any $k,\ell\geq 2$ and $\tau,\omega:S^{k-1} \rightarrow SO(n)$, by Theorem~\ref{gyrationcoH} the iterated gyration 
$\mathcal{G}^{\ell}_{\omega}(\mathcal{G}^{k}_{\tau}(M))$ is in $\mathcal{A}$. 
\end{example} 

A systematic family of mixed and iterated examples is the following. 
\medskip 

\noindent 
\textbf{Simply-connected $6$-manifolds with a regular circle action}. 
Duan~\cite{D} completed the classification of simply-connected $6$-manifolds 
with a regular circle action begun by Goldstein and Lininger~\cite{GL}. This begins 
with the classification of simply-connected $5$-manifolds by Smale~\cite{Sm} 
and Barden~\cite{Bar}. They showed that all simply-connected $5$-manifolds 
can be described up to diffeomorphism as iterated connected sums of five basic 
types: $S^{2}\times S^{3}$, $S^{2}\widetilde{\times} S^{3}$, $W$, $M_{k}$ and $X_{2^{i}}$. 
Here, $S^{2}\widetilde{\times} S^{3}$ is the nontrivial $S^{3}$-bundle over $S^{2}$, 
$W$ is the Wu manifold $SU(3)/SO(3)$, $M_{k}$ is a $5$-cell complex with 
$H_{2}(M)\cong\mathbb{Z}/k\mathbb{Z}\oplus\mathbb{Z}/k\mathbb{Z}$, and 
$X_{2^{i}}$ is also a $5$-cell complex with 
$H_{2}(X_{2^{i}})\cong\mathbb{Z}/2^{i}\mathbb{Z}\oplus\mathbb{Z}/2^{i}\mathbb{Z}$ 
but has a different attaching map for the $5$-cell than $M_{2^{i}}$. For any such 
$5$-manifold $M$, with $k=2$ there are two inequivalent twisted gyrations, the trivial 
one $\mathcal{G}^{2}_{0}(M)$ and a nontrivial one denoted $\mathcal{G}^{2}_{1}(M)$, 
corresponding to the trivial and nontrivial homotopy classes for maps 
\(\namedright{S^{1}}{}{SO(5)}\).  

\begin{theorem}[\cite{D} Theorem C] 
   \label{6manifold} 
   If $M$ is a simply-connected $6$-manifold that admits a regular circle action then 
   \[M\cong\left\{\begin{array}{l} 
         (S^{3}\times S^{3})\conn_{r}\mathcal{G}^{2}_{0}(S^{2}\times S^{3})\conn_{1\leq j\leq t} 
              \mathcal{G}^{2}_{1}(M_{k_{j}})\conn\mathcal{G}^{2}_{1}(H)\ \mbox{if $w_{2}(M)\neq 0$} \\ 
         (S^{3}\times S^{3})\conn_{r}\mathcal{G}^{2}_{0}(S^{2}\times S^{3})\conn_{1\leq j\leq t} 
              \mathcal{G}^{2}_{1}(M_{k_{j}})\ \mbox{if $w_{2}(M)= 0$} 
       \end{array}\right.\] 
    where $H\in\{S^{2}\widetilde{\times} S^{3},W,X_{k}\}$, $\conn_{r}(\cdots)$ means take the 
    connected sum $r$ times and $\conn_{0}(\cdots)$ means take the connected sum 
    with $S^{6}$.~$\qqed$ 
\end{theorem}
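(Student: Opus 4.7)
The plan is to classify simply-connected $6$-manifolds $M$ with a regular circle action by reducing to the Smale-Barden classification of the $5$-dimensional orbit space $N = M/S^{1}$ and then analysing the principal $S^{1}$-bundles $M \to N$. Since the circle action is regular (free), the quotient $N = M/S^{1}$ is a simply-connected, closed $5$-manifold, and $M$ is the total space of a principal $S^{1}$-bundle over $N$ classified by its Euler class $e \in H^{2}(N;\mathbb{Z})$.

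First, I would invoke the Smale-Barden theorem to write $N$ as an iterated connected sum of the basic $5$-manifolds $S^{2}\times S^{3}$, $S^{2}\widetilde{\times} S^{3}$, $W$, $M_{k}$, and $X_{2^{i}}$. Next, I would identify principal $S^{1}$-bundles over such a $5$-manifold with the gyration construction of~(\ref{gyrationpo}): for $k=2$ and an appropriate choice of \(\tau\colon\namedright{S^{1}}{}{O(n)}\), the twisted gyration $\mathcal{G}^{2}_{\tau}(N)$ realises precisely the total space of the $S^{1}$-bundle whose classifying map records~$\tau$. The two distinct homotopy classes of maps $S^{1}\to SO(5)$ then correspond to the trivial bundle $\mathcal{G}^{2}_{0}$ and the nontrivial bundle $\mathcal{G}^{2}_{1}$ over each summand.

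The key step is to show that the principal $S^{1}$-bundle over a connected sum decomposes compatibly: one picks a framed disc away from the gluing locus, so that an $S^{1}$-bundle over $N_{1}\conn N_{2}$ with chosen Euler class may be presented as a connected sum of $S^{1}$-bundles over each $N_{i}$. Summands of $N$ contributing a free class to $H^{2}(N;\mathbb{Z})$ (namely the $S^{2}\times S^{3}$ factors) can carry the Euler class of~$M$, and a surgery/handle argument identifies the total space over such a summand with $S^{3}\times S^{3}$, explaining the $r$-fold $S^{3}\times S^{3}$ prefix. The remaining summands contribute $\mathcal{G}^{2}_{0}(S^{2}\times S^{3})$ and $\mathcal{G}^{2}_{1}(M_{k_{j}})$ pieces according to whether the restriction of $\tau$ is trivial or nontrivial on each torsion block of $H^{2}(N;\mathbb{Z})$, together with at most one exceptional summand of type $H\in\{S^{2}\widetilde{\times} S^{3}, W, X_{k}\}$.

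The main obstacle, and the heart of Duan's argument, is distinguishing the two cases $w_{2}(M)=0$ versus $w_{2}(M)\neq 0$. One must compute $w_{2}$ for each basic gyration summand, use the fact that $w_{2}$ is additive under connected sum in dimension six, and show that the presence of an exceptional summand $\mathcal{G}^{2}_{1}(H)$ with $H\in\{S^{2}\widetilde{\times} S^{3}, W, X_{k}\}$ is equivalent to the nonvanishing of $w_{2}(M)$. The delicate point is uniqueness: after absorbing the exceptional summand into the rest of the decomposition using diffeomorphisms of the form \mbox{$\mathcal{G}^{2}_{1}(H)\conn\mathcal{G}^{2}_{1}(M_{k})\cong\mathcal{G}^{2}_{1}(H\conn M_{k})$} when possible, one obtains a canonical normal form. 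This final reduction is what pins down the dichotomy in the statement.
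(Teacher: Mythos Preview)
The paper does not prove this theorem. It is quoted verbatim from Duan~\cite{D} as Theorem~C and marked with $\qqed$, indicating that it is stated without proof and used as a black box. The paper's contribution in this section is Proposition~\ref{6manifoldA}, which takes Duan's classification as input and checks that the resulting $6$-manifolds lie in~$\mathcal{M}$ and (under a rank hypothesis) in~$\mathcal{A}$. So there is no ``paper's own proof'' of Theorem~\ref{6manifold} to compare your proposal against.

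That said, your sketch conflates two different constructions. The gyration $\mathcal{G}^{2}_{\tau}(N)$ is defined in~(\ref{gyrationpo}) as a \emph{surgery} on $N\times S^{1}$, not as the total space of a principal $S^{1}$-bundle over~$N$. Your plan treats $\mathcal{G}^{2}_{\tau}$ as if it packages the Euler class of a circle bundle, but the twist $\tau\colon S^{1}\to O(5)$ enters through the framing of the surgery, not through a bundle classifying map. Duan's argument does pass through the quotient $5$-manifold and the Smale--Barden list, but the link between circle bundles and the gyration normal form is the substantive content of his paper and is not as direct as ``bundle over a connected sum equals connected sum of bundles''; in particular, your claim that the $S^{3}\times S^{3}$ summands arise from free $H^{2}$ classes carrying the Euler class, while the remaining summands come from torsion blocks, would need justification beyond what you have written. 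If you want to reconstruct the proof, you should consult~\cite{D} directly rather than infer it from the present paper, which only uses the statement.
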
 

We show that each manifold in Theorem~\ref{6manifold} has $\overline{M}\in\mathcal{M}$, 
and if there is at least one $\mathbb{Z}$-summand in $H_{2}(M)$, then $M\in\mathcal{A}$, 
implying that the refined homotopy decomposition in 
Corollary~\ref{cor:MbarwedgeofsphereandMoore} holds.  

\begin{proposition} 
   \label{6manifoldA} 
   Let $M$ be a simply-connected $6$-manifold that admits a regular circle action. 
   Then the following hold: 
   \begin{itemize} 
      \item[(a)] $\overline{M}\in\mathcal{M}$; 
      \item[(b)] if the rank of $H_{2}(M)$ is at least $1$ then $M\in\mathcal{A}$. 
   \end{itemize} 
\end{proposition}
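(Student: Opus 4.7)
The plan is to exploit Duan's classification in Theorem~\ref{6manifold} and reduce both parts to an analysis of the basic Smale--Barden building blocks, leveraging the closure properties of $\mathcal{M}$ and $\mathcal{A}$ established earlier in the paper.

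For part (a), I would first use that for any simply-connected $n$-dimensional Poincar\'{e} Duality complexes $P$ and $Q$ there is a homotopy equivalence $\overline{P\conn Q}\simeq\overline{P}\vee\overline{Q}$, and that $\mathcal{M}$ is closed under wedges and suspensions. Hence it suffices to check that $\overline{F}\in\mathcal{M}$ for each building block $F$ appearing in Theorem~\ref{6manifold}. For $F=S^{3}\times S^{3}$ this is immediate since $\overline{F}\simeq S^{3}\vee S^{3}$. For $F=\mathcal{G}^{2}_{\tau}(N)$ with $N$ one of the basic $5$-manifolds, Lemma~\ref{BasuGhosh} gives $\overline{F}\simeq\overline{N}\rtimes S^{1}$, and since $\overline{N}$ will be a co-$H$-space in each case, this reduces to $\overline{N}\vee\Sigma\overline{N}$. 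One then reads off from the CW structures coming out of the Smale--Barden classification that
\[
\overline{S^{2}\times S^{3}}\simeq\overline{S^{2}\widetilde{\times}S^{3}}\simeq S^{2}\vee S^{3},\quad \overline{W}\simeq P^{3}(2),\quad \overline{M_{k}}\simeq P^{3}(k)\vee P^{3}(k),\quad \overline{X_{2^{i}}}\simeq P^{3}(2^{i})\vee P^{3}(2^{i}),
\]
all of which lie in $\mathcal{M}$. Wedging the contributions then gives $\overline{M}\in\mathcal{M}$.

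For part (b), part (a) already supplies that $\overline{M}$ is a co-$H$-space, so it remains only to produce a map $S^{2}\to M$ with a left homotopy inverse. From the wedge formula $\overline{\mathcal{G}^{2}_{\tau}(N)}\simeq\overline{N}\vee\Sigma\overline{N}$, only the building blocks $\mathcal{G}^{2}_{0}(S^{2}\times S^{3})$ and $\mathcal{G}^{2}_{1}(S^{2}\widetilde{\times}S^{3})$ contribute a free summand to $H_{2}$; the remaining blocks $S^{3}\times S^{3}$, $\mathcal{G}^{2}_{1}(M_{k})$, $\mathcal{G}^{2}_{1}(W)$ and $\mathcal{G}^{2}_{1}(X_{k})$ contribute only torsion or nothing to $H_{2}$. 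The rank hypothesis therefore forces at least one factor of the form $F=\mathcal{G}^{2}_{\tau}(N)$ with $N\in\{S^{2}\times S^{3},\,S^{2}\widetilde{\times}S^{3}\}$ in the connected sum decomposition of $M$. For such an $N$, Lemma~\ref{spherebundleA} (or direct inspection in the product case) gives $N\in\mathcal{A}$, and Theorem~\ref{gyrationcoH} then promotes $F$ to $\mathcal{A}$. This supplies a map $s\colon S^{2}\to F$ with left homotopy inverse $r\colon F\to S^{2}$; writing $M\simeq F\conn M'$, the collapse map $c\colon M\to F$ that crushes $\overline{M'}\subset M$ to a point exhibits $r\circ c$ as a left homotopy inverse for the composite $S^{2}\to\overline{F}\hookrightarrow\overline{M}\to M$. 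Hence $M\in\mathcal{A}$.

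The main technical point will be the explicit identification of $\overline{M_{k}}$ and $\overline{X_{2^{i}}}$ as wedges of Moore spaces; this rests on the CW structures of these $5$-manifolds coming out of the Smale--Barden classification and is the least-familiar ingredient. Everything else is book-keeping against the closure results for $\mathcal{M}$ and $\mathcal{A}$ already assembled.
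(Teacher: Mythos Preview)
Your proposal is correct and follows essentially the same route as the paper's proof: both parts reduce via Duan's classification to the basic Smale--Barden blocks, use $\overline{P\conn Q}\simeq\overline{P}\vee\overline{Q}$ together with Lemma~\ref{BasuGhosh} and the co-$H$ splitting $\overline{N}\rtimes S^{1}\simeq\overline{N}\vee\Sigma\overline{N}$ for part~(a), and for part~(b) identify that only $\mathcal{G}^{2}_{0}(S^{2}\times S^{3})$ or $\mathcal{G}^{2}_{1}(S^{2}\widetilde{\times}S^{3})$ can supply a free $H_{2}$-summand, then invoke Theorem~\ref{gyrationcoH} and the collapse map to obtain the required $S^{2}\to M$ with left inverse. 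The only cosmetic difference is that you appeal uniformly to Lemma~\ref{spherebundleA} for both $S^{2}\times S^{3}$ and $S^{2}\widetilde{\times}S^{3}$, whereas the paper verifies these two cases by hand.
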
 

\begin{proof}  
First consider $\overline{M}$. In general, if $N_{1}$ and $N_{2}$ are $n$-dimensional 
closed manifolds then there is a homotopy equivalence 
$\overline{N_{1}\conn N_{2}}\simeq\overline{N}_{1}\vee\overline{N}_{2}$. 
Thus the homotopy type of $\overline{M}$ for each $M$ in Theorem~\ref{6manifold} is 
given by the wedge sum of the $5$-skeletons of each connected sum factor. By 
Lemma~\ref{BasuGhosh}, $\overline{\mathcal{G}^{2}_{t}(N)}\simeq\overline{N}\rtimes S^{1}$. 
If $\overline{N}$ is a co-$H$-space then 
$\overline{N}\rtimes S^{1}\simeq\overline{N}\vee(\overline{N}\wedge S^{1})= 
    \overline{N}\vee\Sigma\overline{N}$. 
Thus we obtain: 
\begin{itemize} 
   \item[] $\overline{(S^{3}\times S^{3})}\simeq S^{3}\vee S^{3}$; 
   \item[] $\overline{\mathcal{G}^{2}_{0}(S^{2}\times S^{3})}\simeq 
                 (S^{2}\vee S^{3})\vee\Sigma(S^{2}\vee S^{3})$;  
  \item[] $\overline{\mathcal{G}^{2}_{1}(M_{k_{j}})}\simeq 
                (P^{3}(k_{j})\vee P^{3}(k_{j}))\vee\Sigma(P^{3}(k_{j})\vee P^{3}(k_{j}))$; 
  \item[] $\overline{\mathcal{G}^{2}_{1}(S^{2}\widetilde{\times} S^{3})}\simeq 
                (S^{2}\vee S^{3})\vee\Sigma(S^{2}\vee S^{3})$; 
  \item[] $\overline{\mathcal{G}^{2}_{1}(W)}\simeq P^{3}(2)\vee\Sigma P^{3}(2)$; 
  \item[] $\overline{\mathcal{G}^{2}_{1}(X_{k})}\simeq 
                (P^{3}(k)\vee P^{3}(k))\vee\Sigma(P^{3}(k)\vee P^{3}(k))$. 
\end{itemize} 
In particular, this implies that each $\overline{M}$ is homotopy equivalent to a wedge 
of spheres and Moore spaces, so $\overline{M}\in\mathcal{M}$, proving part~(a). 

The hypothesis that the rank of $H_{2}(M)$ is at least~$1$ states  
that $H_{2}(M)$ has at least one $\mathbb{Z}$-summand. For degree reasons, 
this is equivalent to stating that $H_{2}(\overline{M})$ has at least one $\mathbb{Z}$-summand. 
The only wedge summands above that could satisfy this are  
$\overline{\mathcal{G}^{2}_{0}(S^{2}\times S^{3})}$ and 
$\overline{\mathcal{G}^{2}_{1}(S^{2}\widetilde{\times} S^{3})}$. Thus at least one 
of $\mathcal{G}^{2}_{0}(S^{2}\times S^{3})$ or $\mathcal{G}^{2}_{1}(S^{2}\widetilde{\times} S^{3})$ 
is a connected sum factor of $M$. In general, if $N_{1}$ and $N_{2}$ are $n$-dimensional 
closed manifolds then there is a map 
\(\namedright{N_{1}\conn N_{2}}{}{N_{1}}\) 
given by collapsing $\overline{N}_{2}$ to a point. In our case, there must be a map
\[\namedright{M}{}{\mathcal{G}^{2}_{0}(S^{2}\times S^{3})}\qquad\mbox{or}\qquad 
     \namedright{M}{}{\mathcal{G}^{2}_{1}(S^{2}\widetilde{\times} S^{3})}.\] 
Since $S^{2}\times S^{3}\in\mathcal{A}$, Theorem~\ref{gyrationcoH} implies that 
$\mathcal{G}^{2}_{0}(S^{2}\times S^{3})\in\mathcal{A}$. Therefore, there is a map 
\(\namedright{S^{2}}{}{\mathcal{G}^{2}_{0}(S^{2}\times S^{3})}\)
with a left homotopy inverse. Since this map factors through 
$\overline{\mathcal{G}^{2}_{0}(S^{2}\times S^{3})}$, we obtain a composite 
\[S^{2}\hookrightarrow\namedddright{\overline{\mathcal{G}^{2}_{0}(S^{2}\times S^{3})}}{} 
     {\overline{M}}{}{M}{}{\mathcal{G}^{2}_{0}(S^{2}\times S^{3})}\] 
that has a left homotopy inverse. Hence the map 
\(\namedright{S^{2}}{}{M}\) 
has a left homotopy inverse, implying that $M\in\mathcal{A}$. The argument 
in the case of $\mathcal{G}^{2}_{1}(S^{2}\widetilde{\times} S^{3})$ is similar since 
the bundle map 
\(\namedright{S^{2}\widetilde{\times} S^{3}}{}{S^{2}}\) 
is a left homotopy inverse for the inclusion of the bottom cell, implying that 
$S^{2}\widetilde{\times} S^{3}\in\mathcal{A}$. 
\end{proof} 

In particular, Proposition~\ref{6manifoldA}~(b) implies that the attaching map for the 
top cell of such an $M$ is integrally inert. This extends the known families of manifolds 
that have this property.

\section{Local decompositions of highly connected \texorpdfstring{$CW$}{CW}-complexes}
\label{sec:localdecomphighlyconnCW}

To further expand the examples to which we can apply Corollary ~\ref{cor:MbarwedgeofsphereandMoore}, we localise. Before considering Poincare duality complexes we first consider wedge decompositions of certain highly connected CW-complexes, when localised away from an explicit, finite set of primes. 

\subsection{Homotopy classes of maps involving spheres and Moore spaces}
We start with classical results of Serre \cite{Ser} about torsion in the homotopy groups of spheres. Let $p$ be a prime.

\begin{theorem}
    \label{thm:1stptorssphere}
    Let $m \geq 2$. The group $\pi_k(S^m)$ is torsion except when $k=m$ or when $m$ is even and $k=2m-1$. Further, $\pi_{k}(S^{m})$ contains no $p$-torsion for $k < m+2p-3$. $\qqed$
\end{theorem}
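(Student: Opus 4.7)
The plan is to split the theorem into its rational and $p$-primary assertions and deduce each from the Serre spectral sequence of the homotopy fibration \(\nameddright{F}{}{S^{m}}{f}{K(\mathbb{Z},m)}\), where $f$ represents a generator of $H^{m}(S^{m};\mathbb{Z})$ and $F$ is its homotopy fibre.

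For the rational statement, I would work with $\mathbb{Q}$ coefficients. When $m$ is odd, $H^{*}(K(\mathbb{Q},m);\mathbb{Q})$ is the exterior algebra on $\iota_{m}$, matching $H^{*}(S^{m};\mathbb{Q})$, so the Serre spectral sequence forces $H^{*}(F;\mathbb{Q})$ to be concentrated in degree $0$. Hence $F$ is rationally contractible and $\pi_{k}(S^{m})\otimes\mathbb{Q}\cong\pi_{k}(K(\mathbb{Q},m))$, which vanishes except in degree $m$. When $m$ is even, $H^{*}(K(\mathbb{Q},m);\mathbb{Q})=\mathbb{Q}[\iota_{m}]$; a standard induction in the Serre spectral sequence, using that $H^{k}(K(\mathbb{Q},m);\mathbb{Q})$ is nonzero only in degrees divisible by $m$, identifies $H^{*}(F;\mathbb{Q})$ with the exterior algebra on a class of degree $2m-1$ transgressing to $\iota_{m}^{2}$. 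The long exact sequence of homotopy groups then produces the extra $\mathbb{Q}$ summand in $\pi_{2m-1}(S^{m})$ and no other new rational classes.

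For the $p$-primary bound, first take $m$ odd. Invoking the Cartan--Serre calculation of $H^{*}(K(\mathbb{Z},m);\mathbb{F}_{p})$, the only classes through degree $m+2p-2$ are $\iota_{m}$ in degree $m$ and $P^{1}\iota_{m}$ in degree $m+2p-2$ (replaced by $Sq^{2}\iota_{m}$ when $p=2$). In the mod-$p$ Serre spectral sequence, $P^{1}\iota_{m}$ must be killed by a transgression from a class in $H^{m+2p-3}(F;\mathbb{F}_{p})$, so $\widetilde{H}^{*}(F;\mathbb{F}_{p})$ vanishes in positive degrees below $m+2p-3$. The mod-$p$ Hurewicz theorem together with the long exact sequence of the fibration converts this into the absence of $p$-torsion in $\pi_{k}(S^{m})$ for $k<m+2p-3$. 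For $m$ even and $p$ odd, I would reduce to the odd case via the classical James--Serre splitting $\Omega S^{m}_{(p)}\simeq S^{m-1}_{(p)}\times\Omega S^{2m-1}_{(p)}$, which expresses $\pi_{k}(S^{m})_{(p)}$ in terms of $\pi_{*}(S^{m-1})_{(p)}$ and $\pi_{*}(S^{2m-1})_{(p)}$, both of which satisfy the bound by the odd-sphere case. For $p=2$, the bound reads $k<m+1$, which only allows $k=m$, and $\pi_{m}(S^{m})=\mathbb{Z}$ is $2$-torsion-free.

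The main obstacle is the Cartan--Serre computation of the mod-$p$ cohomology of $K(\mathbb{Z},m)$ in the relevant stable range; this is the crucial input making the spectral sequence argument work, and encodes precisely why the first $p$-torsion can appear no earlier than $m+2p-3$. Once it is in hand, the remaining steps are routine Hurewicz-type translations; the only mild subtlety is the even-dimensional case, which is handled either by splitting $\Omega S^{m}$ at odd primes or by isolating the extra rational class in degree $2m-1$ before running the mod-$p$ analysis.
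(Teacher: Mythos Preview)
The paper does not actually prove this theorem: it is stated with a trailing $\qqed$ and attributed to Serre~\cite{Ser} as a classical input, with no argument given. Your sketch is essentially Serre's original approach (the fibration over $K(\mathbb{Z},m)$, the rational analysis via the Serre spectral sequence, and the mod-$p$ bound coming from the Cartan--Serre description of $H^{*}(K(\mathbb{Z},m);\mathbb{F}_{p})$ together with Serre's $\mathcal{C}$-theory/Hurewicz), so it matches the intended reference rather than anything in the paper itself.

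One small remark on your reduction for $m$ even and $p$ odd: the splitting $\Omega S^{m}_{(p)}\simeq S^{m-1}_{(p)}\times\Omega S^{2m-1}_{(p)}$ gives $\pi_{k}(S^{m})_{(p)}\cong\pi_{k-1}(S^{m-1})_{(p)}\oplus\pi_{k}(S^{2m-1})_{(p)}$, and the odd-sphere bound for $S^{m-1}$ is $k-1<(m-1)+2p-3$, i.e.\ exactly $k<m+2p-3$; so the arithmetic lines up, but it is worth writing that inequality out since the margin is zero. Also, when $m=2$ the factor $S^{m-1}=S^{1}$ is not simply connected, so strictly speaking the odd-sphere argument does not apply to it; you should note separately that $\pi_{k-1}(S^{1})$ is torsion-free for all $k$, which handles that edge case.
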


Next, a corresponding result is given for the homotopy groups of Moore spaces. This is an extension of an argument by Cutler and So \cite[Lemma 2.2]{CS}. For a prime $p$ and integers $r\geq 1$ and $m\geq 2$, the \emph{mod-$p^{r}$ Moore space} $P^{m}(p^{r})$ is the homotopy cofibre of the degree $p^{r}$ map on $S^{m-1}$. 

\begin{lemma}
\label{lem:htpygrpsMoore}
Fix $m \geq 3$. Let $k$ satisfy $m+1 \leq k \leq 2m-2$ and let $p$ be a prime such that $p > \frac{k-m+3}{2}$. Then $\pi_k(P^{m+1}(p^r))$ is trivial. 
\end{lemma}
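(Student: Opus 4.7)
The plan is to reduce the claim to a computation in stable homotopy and apply Serre's classical theorem on the first $p$-primary torsion in the stable stems.

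First, I would invoke the Freudenthal suspension theorem. Since $P^{m+1}(p^r)$ is $(m-1)$-connected, the suspension map $\pi_k(P^{m+1}(p^r)) \to \pi_{k+1}(P^{m+2}(p^r))$ is an isomorphism whenever $k \leq 2m-2$, and the range $m+1 \leq k \leq 2m-2$ of the lemma lies entirely in this stable range. Iterating, one obtains
\[\pi_k(P^{m+1}(p^r)) \cong \pi_{k-m}^{s}(\mathbb{M}(p^r)),\]
where $\mathbb{M}(p^r)$ denotes the mod-$p^r$ Moore spectrum (with bottom cell in dimension $0$, so that $P^{m+1}(p^r) \simeq \Sigma^m \mathbb{M}(p^r)$ stably). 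Set $n = k - m$; the hypothesis gives $1 \leq n \leq m-2$.

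Second, I would apply the long exact sequence of stable homotopy groups associated to the cofibre sequence of spectra $\mathbb{S} \xrightarrow{p^r} \mathbb{S} \to \mathbb{M}(p^r)$. This yields
\[\pi_n^{s} \xrightarrow{p^r} \pi_n^{s} \to \pi_n^{s}(\mathbb{M}(p^r)) \to \pi_{n-1}^{s} \xrightarrow{p^r} \pi_{n-1}^{s}.\]
The middle term vanishes precisely when multiplication by $p^r$ is surjective on $\pi_n^{s}$ and injective on $\pi_{n-1}^{s}$. Since $\pi_j^{s}$ is a finite abelian group for $j \geq 1$ (and $\pi_0^{s} = \mathbb{Z}$), both conditions are equivalent to the absence of $p$-primary torsion in $\pi_n^{s}$ and in $\pi_{n-1}^{s}$.

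Finally, I would invoke Serre's theorem that $\pi_j^{s}$ contains no $p$-torsion for $j < 2p-3$. The hypothesis $p > \frac{k-m+3}{2}$ rearranges to $n < 2p-3$, which gives the vanishing of $p$-torsion in $\pi_n^{s}$, and automatically in $\pi_{n-1}^{s}$ (with the boundary case $n = 1$ handled by $\pi_0^{s} = \mathbb{Z}$). Combining with the previous step yields $\pi_n^{s}(\mathbb{M}(p^r)) = 0$, and hence $\pi_k(P^{m+1}(p^r)) = 0$ via the Freudenthal isomorphism.

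The only delicate bookkeeping is ensuring that the prime bound $p > \frac{k-m+3}{2}$ matches exactly the stable-stem vanishing condition $n < 2p-3$; this is a direct algebraic check, and the rest of the argument is an assembly of well-known facts, so I do not anticipate any substantive obstacle.
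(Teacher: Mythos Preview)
Your argument is correct, but it takes a genuinely different route from the paper. The paper argues unstably: it considers the pinch map $q\colon P^{m+1}(p^r)\to S^{m+1}$, identifies the $(2m-1)$-skeleton of its homotopy fibre $F$ with $S^m$, and then shows that any $f\colon S^k\to P^{m+1}(p^r)$ has $q\circ f$ null homotopic (by applying the unstable Serre bound on $\pi_k(S^{m+1})$), hence lifts to $F$ and, by the dimension constraint $k\le 2m-2$, into $S^m$; a second application of the unstable Serre bound to $\pi_k(S^m)$ kills the lift. Your approach instead uses Freudenthal to push everything into the stable range, replaces the fibration argument by the cofibre sequence $\mathbb{S}\xrightarrow{p^r}\mathbb{S}\to\mathbb{M}(p^r)$, and appeals to the stable version of Serre's $p$-torsion bound on $\pi_n^s$. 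Your route is slightly slicker and makes the numerology transparent (the condition $n<2p-3$ drops out directly), at the cost of invoking spectra; the paper's route stays entirely within unstable homotopy theory and uses only the form of Serre's theorem already recorded as Theorem~\ref{thm:1stptorssphere}.
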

\begin{proof}
    Localise at $p$. Let $q:P^{m+1}(p^r) \rightarrow S^{m+1}$ be the pinch map to the top cell and let $F$ be its homotopy fibre. By \cite[p.138]{N}, $H_i(F) \cong \mathbb{Z}$ if $i = lm$ for $l \geq 0$ and is trivial otherwise. Hence, the $(2m-1)$-skeleton of $F$ is homotopy equivalent to $S^m$. 

    Let $f:S^k \rightarrow P^{m+1}(p^r)$ be a map and consider the composite 
    \(\nameddright{S^{k}}{f}{P^{m+1}(p^{r})}{q}{S^{m+1}}\). 
    Since $P^{m+1}(p^{r})$ is rationally contractible, $q\circ f$ represents a torsion homotopy class in $\pi_{k}(S^{m+1})$. The hypothesis $p>\frac{k-m+3}{2}$ implies that $k<m+2p-3$, 
    and therefore $k<(m+1)+2p-3$, so Theorem ~\ref{thm:1stptorssphere} implies that $\pi_k(S^{m+1})$ contains no $p$-torsion. Thus $q \circ f$ is null homotopic, implying that there is a lift \[\begin{tikzcd}
	{S^k} & F \\
	& {P^{m+1}(p^r)}
	\arrow["\phi", from=1-1, to=1-2]
	\arrow["f", from=1-1, to=2-2]
	\arrow[from=1-2, to=2-2]
\end{tikzcd}\] 
for some map $\phi$. By hypothesis, $k\leq 2m-2$, so $\phi$ factors through the $(2m-2)$-skeleton of $F$, which is $S^{m}$. Thus $\phi$ factors through a map 
\(\phi'\colon\namedright{S^{k}}{}{S^{m}}\). 
Since $k<2m-1$, $\phi'$ represents a torsion class in $\pi_{k}(S^{m})$. Since $m \geq 3$ and  the hypothesis $p>\frac{k-m+3}{2}$ implies that $k<m+2p-3$,  Theorem ~\ref{thm:1stptorssphere} implies that $\phi$ is null homotopic. Hence, $f$ is null homotopic.
\end{proof}

We now turn to homotopy classes of maps where the domain is a Moore space. 

\begin{definition}
    Let $p \geq 3$ be a prime, $r \geq 1$ and $X$ be a space. The \emph{$n^{th}$ homotopy group of $X$ with coefficients} in $\mathbb{Z}/p^r\mathbb{Z}$ is the group \[\pi_n(X;\mathbb{Z}/p^r\mathbb{Z}) := [P^n(p^r),X].\]
\end{definition}

Analogues of Theorem ~\ref{thm:1stptorssphere} and Lemma ~\ref{lem:htpygrpsMoore} will now be proved for homotopy groups with coefficients. This requires a universal coefficient theorem for homotopy groups that can be found, for example, in \cite[1.3.1]{N}.

\begin{theorem}
    \label{thm:univcoeffhtpy}
    Let $k \geq 2$ and $X$ be a space. There is a short exact sequence \[0 \rightarrow \pi_k(X) \otimes \mathbb{Z}/p^r\mathbb{Z} \rightarrow \pi_k(X;\mathbb{Z}/p^r\mathbb{Z}) \rightarrow \mathrm{Tor}(\pi_{k-1}(X),\mathbb{Z}/p^r\mathbb{Z}) \rightarrow 0.\eqno\qqed\]
\end{theorem}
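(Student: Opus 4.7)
The plan is to derive the sequence by applying the contravariant homotopy functor $[-,X]$ to the defining cofibration of the Moore space and reading off the short exact sequence from the resulting Puppe long exact sequence.

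First, recall that $P^{k}(p^{r})$ is defined as the homotopy cofibre of the degree $p^{r}$ map on $S^{k-1}$, so there is a cofibration
\[\nameddright{S^{k-1}}{p^{r}}{S^{k-1}}{}{P^{k}(p^{r})}.\]
I would extend this to its Puppe sequence, in which the connecting maps are (up to sign) suspensions of $p^{r}$, and apply the functor $[-,X]$. For $k\geq 2$, each of $P^{k}(p^{r})$, $S^{k-1}$ and $S^{k}$ is a suspension, so the functor lands in groups, and the resulting long exact sequence of homotopy groups and homotopy groups with coefficients takes the form
\[\nameddright{\pi_{k}(X)}{p^{r}}{\pi_{k}(X)}{q^{\ast}}{\pi_{k}(X;\mathbb{Z}/p^{r}\mathbb{Z})}\larrow\nameddright{\pi_{k-1}(X)}{p^{r}}{\pi_{k-1}(X)}{}{}\]
where the maps labelled $p^{r}$ are multiplication by $p^{r}$, induced by the degree $p^{r}$ self-map of the sphere.

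From this long exact sequence I would extract the short exact sequence
\[0\to\mathrm{coker}\bigl(\lnamedright{\pi_{k}(X)}{p^{r}}{\pi_{k}(X)}\bigr)\to\pi_{k}(X;\mathbb{Z}/p^{r}\mathbb{Z})\to\ker\bigl(\lnamedright{\pi_{k-1}(X)}{p^{r}}{\pi_{k-1}(X)}\bigr)\to 0.\]
To finish, I would identify the outer terms algebraically. The cokernel is $\pi_{k}(X)/p^{r}\pi_{k}(X)$, which is precisely $\pi_{k}(X)\otimes\mathbb{Z}/p^{r}\mathbb{Z}$. For the kernel, I would tensor the short exact sequence of abelian groups
\[0\to\mathbb{Z}\xrightarrow{p^{r}}\mathbb{Z}\to\mathbb{Z}/p^{r}\mathbb{Z}\to 0\]
with $\pi_{k-1}(X)$; the resulting $\mathrm{Tor}$ long exact sequence identifies $\ker(p^{r}\colon\pi_{k-1}(X)\to\pi_{k-1}(X))$ with $\mathrm{Tor}(\pi_{k-1}(X),\mathbb{Z}/p^{r}\mathbb{Z})$.

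There is no real obstacle here: the argument is a direct application of the Puppe sequence together with a standard computation of $\mathrm{Tor}$ against a cyclic group. The only mild subtlety is verifying that the set $[P^{k}(p^{r}),X]$ carries the expected group structure making the maps in the Puppe long exact sequence group homomorphisms; this is ensured by the fact that $P^{k}(p^{r})$ is a suspension for $k\geq 2$, so that $[P^{k}(p^{r}),X]$ is a group via the co-$H$-space structure inherited from $S^{k-1}$.
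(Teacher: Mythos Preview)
Your argument is correct and is the standard derivation of the universal coefficient sequence for homotopy with coefficients. Note, however, that the paper does not actually give a proof of this statement: it is quoted as a known result (the $\qqed$ at the end of the statement indicates this), with a reference to Neisendorfer's book \cite[1.3.1]{N}. So there is no ``paper's own proof'' to compare against; your Puppe-sequence argument is exactly the proof one finds in that reference.

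One very minor quibble: you assert that $P^{k}(p^{r})$ is a suspension for all $k\geq 2$. For $k\geq 3$ this is immediate since $P^{k}(p^{r})\simeq\Sigma P^{k-1}(p^{r})$, but for $k=2$ the degree~$p^{r}$ self-map of $S^{1}$ is not a suspension of a self-map of $S^{0}$, so $P^{2}(p^{r})$ is not literally a suspension. It is, however, a co-$H$-space (as the cofibre of a co-$H$-map between co-$H$-spaces), and that is all you need for $[P^{k}(p^{r}),X]$ to be a group and for the maps in the Puppe sequence to be homomorphisms. Your closing remark already phrases it this way, so the slip is purely cosmetic. In any case, the paper restricts to odd primes $p\geq 3$ in its definition of $\pi_{k}(X;\mathbb{Z}/p^{r}\mathbb{Z})$, and in the applications (Lemmas~\ref{lem:htpygrpcoeffsphere} and~\ref{lem:htpygrpcoeffMoore}) one has $k\geq m+1\geq 4$, so the boundary case $k=2$ never arises there.
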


\begin{lemma}
    \label{lem:htpygrpcoeffsphere}
    Let $m \geq 3$, $r \geq 1$, $m+1 \leq k \leq 2m-2$, and $p$ be a prime such that $p > \frac{k-m+3}{2}$. Then $\pi_k(S^m;\mathbb{Z}/p^r\mathbb{Z})$ is trivial.
\end{lemma}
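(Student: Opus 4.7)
The plan is to apply the universal coefficient theorem for homotopy groups (Theorem~\ref{thm:univcoeffhtpy}) and then show that both outer terms vanish, forcing the middle term to be trivial. Specifically, I would reduce the statement to checking that $\pi_k(S^m)\otimes \mathbb{Z}/p^r\mathbb{Z}$ and $\mathrm{Tor}(\pi_{k-1}(S^m),\mathbb{Z}/p^r\mathbb{Z})$ both vanish, and then verify each using Serre's result (Theorem~\ref{thm:1stptorssphere}).

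For the tensor factor, I would first observe that the range $m+1\leq k\leq 2m-2$ excludes both $k=m$ and $k=2m-1$, so by Theorem~\ref{thm:1stptorssphere} the group $\pi_k(S^m)$ is finite torsion. The hypothesis $p>\tfrac{k-m+3}{2}$ rewrites as $k<m+2p-3$, so applying Theorem~\ref{thm:1stptorssphere} again shows $\pi_k(S^m)$ has no $p$-torsion. A finite torsion group with no $p$-torsion is annihilated by tensoring with $\mathbb{Z}/p^r\mathbb{Z}$, so this summand vanishes.

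For the Tor factor, the key point is that $\mathrm{Tor}(-,\mathbb{Z}/p^r\mathbb{Z})$ is insensitive to free summands and to torsion at primes other than $p$, so it suffices to show that $\pi_{k-1}(S^m)$ contains no $p$-torsion. The hypothesis $p>\tfrac{k-m+3}{2}$ implies the weaker inequality $p>\tfrac{(k-1)-m+3}{2}$, i.e., $k-1<m+2p-3$, and Theorem~\ref{thm:1stptorssphere} applies again. Note that the case $k-1=m$ (forced when $k=m+1$) is harmless, since $\pi_m(S^m)\cong\mathbb{Z}$ is torsion-free, and the case $k-1=2m-1$ does not occur in our range because $k\leq 2m-2$ gives $k-1\leq 2m-3$.

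Combining these two vanishings in the short exact sequence from Theorem~\ref{thm:univcoeffhtpy} then yields $\pi_k(S^m;\mathbb{Z}/p^r\mathbb{Z})=0$. The only point requiring care is the index bookkeeping: verifying that the single hypothesis $p>\tfrac{k-m+3}{2}$ simultaneously controls both $\pi_k$ and $\pi_{k-1}$, and that the upper bound $k\leq 2m-2$ is precisely what prevents the exceptional dimension $2m-1$ from appearing in either slot. This is essentially a routine application of the universal coefficient theorem and should not present any serious obstacle.
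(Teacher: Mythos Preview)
Your proposal is correct and follows essentially the same approach as the paper: apply the universal coefficient short exact sequence from Theorem~\ref{thm:univcoeffhtpy} and kill both outer terms using Theorem~\ref{thm:1stptorssphere}. The only cosmetic difference is that the paper separates the case $k=m+1$ (invoking $\pi_{m+1}(S^m)\cong\mathbb{Z}/2$ for $m\geq 3$ and $\pi_m(S^m)\cong\mathbb{Z}$ explicitly), whereas you absorb that case into a uniform argument by observing that $\mathrm{Tor}(-,\mathbb{Z}/p^r\mathbb{Z})$ ignores free summands; both versions are equally valid.
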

\begin{proof}
    By Theorem ~\ref{thm:univcoeffhtpy}, there is a short exact sequence \[0 \rightarrow \pi_k(S^m) \otimes \mathbb{Z}/p^r\mathbb{Z} \rightarrow \pi_k(S^m;\mathbb{Z}/p^r\mathbb{Z}) \rightarrow \mathrm{Tor}(\pi_{k-1}(S^m),\mathbb{Z}/p^r\mathbb{Z}) \rightarrow 0.\] First, consider the case where $k \geq m+2$. In this case, as $k\leq 2m-2$, both $\pi_k(S^m)$ and $\pi_{k-1}(S^m)$ are torsion groups and, as $p>\frac{k-m+3}{2}$, Lemma ~\ref{thm:1stptorssphere} implies both groups contain no $p$-torsion. Hence, $\pi_k(S^m) \otimes \mathbb{Z}/p^r\mathbb{Z}$ and $\mathrm{Tor}(\pi_{k-1}(S^m),\mathbb{Z}/p^r\mathbb{Z})$ are trivial, implying that $\pi_k(S^m;\mathbb{Z}/p^r\mathbb{Z})$ is trivial.

    If $k = m+1$ then $\pi_{m+1}(S^m)$ is $2$-torsion since $m \geq 3$. The hypothesis that $p>\frac{k-m+3}{2}$ is equivalent to $p>2$ for $k=m+1$, so $\pi_{m+1}(S^m) \otimes \mathbb{Z}/p^r\mathbb{Z}$ is trivial. The Tor term is trivial since $\pi_m(S^m) \cong \mathbb{Z}$. Hence, $\pi_{m+1}(S^m;\mathbb{Z}/p^r\mathbb{Z})$ is trivial.
\end{proof}

\begin{lemma}
      \label{lem:htpygrpcoeffMoore}
         Let $m \geq 3$, $r,s \geq 1$, $m+2 \leq k \leq 2m-2$, and $p,q$ be primes. If $p \neq q$, then $\pi_k(P^{m+1}(p^r);\mathbb{Z}/q^s\mathbb{Z})$ is trivial. If $p = q$, suppose that $p> \frac{k-m+3}{2}$. Then $\pi_k(P^{m+1}(p^r);\mathbb{Z}/p^s\mathbb{Z})$ is trivial.
\end{lemma}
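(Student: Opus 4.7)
The plan is to prove both cases by applying the universal coefficient theorem for homotopy groups with coefficients (Theorem~\ref{thm:univcoeffhtpy}) to $X = P^{m+1}(p^r)$, which gives a short exact sequence
\[0 \to \pi_k(P^{m+1}(p^r)) \otimes \mathbb{Z}/q^s\mathbb{Z} \to \pi_k(P^{m+1}(p^r);\mathbb{Z}/q^s\mathbb{Z}) \to \mathrm{Tor}(\pi_{k-1}(P^{m+1}(p^r)),\mathbb{Z}/q^s\mathbb{Z}) \to 0.\]
It then suffices to verify that both outer terms vanish in each case.

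For the case $p = q$, I would apply Lemma~\ref{lem:htpygrpsMoore} to both $\pi_k(P^{m+1}(p^r))$ and $\pi_{k-1}(P^{m+1}(p^r))$ directly. The hypothesis $m+2 \leq k \leq 2m-2$ implies $m+1 \leq k-1 \leq 2m-3$, so $k-1$ still lies within the permitted dimension range. Moreover, the inequality $p > \frac{k-m+3}{2}$ is strictly stronger than $p > \frac{(k-1)-m+3}{2}$, so the prime hypothesis transfers as well. Lemma~\ref{lem:htpygrpsMoore} then forces both $\pi_k(P^{m+1}(p^r))$ and $\pi_{k-1}(P^{m+1}(p^r))$ to be trivial, making the outer terms of the short exact sequence vanish.

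For the case $p \neq q$, the plan is to exploit $q$-local contractibility. The map $p^r : S^m \to S^m$ becomes a homotopy equivalence after localizing at $q$, since $p^r$ is a unit in $\mathbb{Z}_{(q)}$. Consequently its cofibre $P^{m+1}(p^r)_{(q)}$ is contractible, so $\pi_n(P^{m+1}(p^r)) \otimes \mathbb{Z}_{(q)} \cong \pi_n(P^{m+1}(p^r)_{(q)}) = 0$ for every $n$. This means every element of $\pi_n(P^{m+1}(p^r))$ is annihilated by an integer coprime to $q$; a short B\'ezout-type argument then shows that both $\pi_n(P^{m+1}(p^r)) \otimes \mathbb{Z}/q^s\mathbb{Z}$ and $\mathrm{Tor}(\pi_n(P^{m+1}(p^r)), \mathbb{Z}/q^s\mathbb{Z})$ vanish.

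The main obstacle is essentially dimensional bookkeeping in the $p = q$ case, which explains why the present statement requires $k \geq m+2$ rather than $k \geq m+1$: one needs Lemma~\ref{lem:htpygrpsMoore} to apply to $\pi_{k-1}$ as well as to $\pi_k$. In the $p \neq q$ case the only subtlety is passing from $q$-local triviality of homotopy groups to vanishing of the tensor and Tor terms, which is elementary once stated.
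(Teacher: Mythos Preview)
Your proposal is correct and follows essentially the same approach as the paper: both apply the universal coefficient short exact sequence of Theorem~\ref{thm:univcoeffhtpy}, handle the case $p=q$ by invoking Lemma~\ref{lem:htpygrpsMoore} for both $\pi_k$ and $\pi_{k-1}$, and handle $p\neq q$ by observing that the homotopy groups of $P^{m+1}(p^r)$ are $p$-torsion (the paper states this directly, while you derive it via $q$-local contractibility). Your explicit check that the range and prime hypotheses of Lemma~\ref{lem:htpygrpsMoore} transfer to $k-1$ is a nice addition.
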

\begin{proof}
    By Theorem ~\ref{thm:univcoeffhtpy}, there is a short exact sequence \[0 \rightarrow \pi_k(P^{m+1}(p^r)) \otimes \mathbb{Z}/q^s\mathbb{Z} \rightarrow \pi_k(P^{m+1}(p^r);\mathbb{Z}/q^s\mathbb{Z}) \rightarrow \mathrm{Tor}(\pi_{k-1}(P^{m+1}(p^r)),\mathbb{Z}/q^s\mathbb{Z}) \rightarrow 0.\] Since $P^{m+1}(p^{r})$ is contractible when localised at any prime not equal to $q$, the homotopy groups of $P^{m+1}(p^{r})$ are all $p$-torsion. Therefore if $p \neq q$ then both $\pi_{k}(P^{m+1}(p^{r}))\otimes\mathbb{Z}/q^{s}\mathbb{Z}$ and the Tor term are trivial, implying that $\pi_k(P^{m+1}(p^r);\mathbb{Z}/q^s\mathbb{Z})$ is trivial.

    If $p = q$ then the hypotheses on both $k$ and $p$ imply, by Lemma ~\ref{lem:htpygrpsMoore}, that both $\pi_k(P^{m+1}(p^r))$ and $\pi_{k-1}(P^{m+1}(p^r))$ are trivial. Hence, $\pi_k(P^{m+1}(p^r);\mathbb{Z}/p^s\mathbb{Z})$ is trivial.
\end{proof}

\subsection{Local decompositions of highly connected \texorpdfstring{$CW$}{CW}-complexes}

We now give decompositions of certain highly connected $CW$-complexes, after localisation away from sufficiently many primes. Recall that $\mathcal{W}$ is the collection of topological spaces homotopy equivalent to a finite type wedge of spheres.

\begin{lemma}
    \label{lem:wedgeofsphereshighlyconnCW}
    Let $X$ be an $(m-1)$-connected $CW$-complex of dimension $n \leq 2m-1$, where $m \geq 2$. Localise away from primes appearing as $p$-torsion in $H_*(X)$ and primes $p \leq \frac{n-m+3}{2}$. Then $X \in \mathcal{W}$.
\end{lemma}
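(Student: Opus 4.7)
The approach is to work throughout with $X$ localised at the stated set $\mathcal{P}$ of inverted primes. Because $\mathcal{P}$ contains every prime at which $\widetilde{H}_{\ast}(X)$ has torsion, the localised reduced homology is a free $\mathbb{Z}[\mathcal{P}^{-1}]$-module. Since $X$ is $(m-1)$-connected with $m \geq 2$ and of dimension $n$, I may assume it has a $CW$-structure with cells only in dimensions $m$ through $n$. I would then induct on $k$ with $m \leq k \leq n$ to show that the $k$-skeleton $X^{(k)}$ is, after localisation, a finite wedge of spheres. The base case $k = m$ is immediate: $X^{(m)}$ is $(m-1)$-connected with cells only in dimension $m$, so it is a wedge of $m$-spheres.

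For the inductive step, suppose $X^{(k-1)} \simeq W := \bigvee_{i \in I} S^{k_{i}}$ with $m \leq k_{i} \leq k-1$, and let $f : \bigvee_{j \in J} S^{k-1} \to W$ denote the attaching map for the $k$-cells. The key simplification is that Hilton's theorem decomposes $\pi_{k-1}(W)$ as a direct sum over Hall basis elements $w$ with summand $\pi_{k-1}(S^{|w|})$, and every weight-$\geq 2$ element satisfies $|w| \geq 2m$; since $k-1 \leq 2m-2$, these summands vanish, leaving $\pi_{k-1}(W) \cong \bigoplus_{i \in I} \pi_{k-1}(S^{k_{i}})$. Thus $f$ is determined by its matrix of components $f_{ij} : S^{k-1} \to S^{k_{i}}$.

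I would analyse each $f_{ij}$ in two cases. When $k_{i} < k-1$, the bounds $k_{i} \geq m$ and $k \leq 2m-1$ give $k-1 < 2k_{i}-1$, so $\pi_{k-1}(S^{k_{i}})$ is torsion; by Theorem~\ref{thm:1stptorssphere}, any $p$-torsion forces $p \leq \frac{k - k_{i} + 2}{2} \leq \frac{n - m + 2}{2} \leq \frac{n - m + 3}{2}$, so all such primes lie in $\mathcal{P}$ and $f_{ij}$ is null-homotopic after localisation. When $k_{i} = k-1$, the map $f_{ij}$ is a degree map, and the full matrix $(d_{ij})$ is, up to sign, the cellular boundary $\partial_{k}$ landing in degree $k-1$. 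Freeness of $\widetilde{H}_{k-1}(X; \mathbb{Z}[\mathcal{P}^{-1}])$ forces the Smith normal form of $(d_{ij})$ over $\mathbb{Z}[\mathcal{P}^{-1}]$ to consist only of units and zeros.

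Finally, I would realise these basis changes topologically, using the co-$H$-structure on $\bigvee S^{k-1}$ to add attaching maps on the source, and standard self-equivalences of a wedge of equidimensional spheres on the target, so as to replace $f$ by a diagonal map. Each unit entry attaches a $k$-cell to one of the $(k-1)$-sphere summands via a localised homotopy equivalence, producing a contractible piece; each zero entry simply wedges on a new $k$-sphere. Hence $X^{(k)}$ is a wedge of spheres, completing the induction, and taking $k = n$ yields $X \in \mathcal{W}$. The main technical obstacle I anticipate is this Hilton--Serre bookkeeping: the dimensional bound $n \leq 2m-1$ and the prime range $p \leq \frac{n-m+3}{2}$ have been calibrated precisely so that every potentially nontrivial component of every attaching map is killed by localisation.
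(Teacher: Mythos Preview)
Your argument is correct but takes a different route from the paper. The paper's proof is two lines long: it observes that since $n\leq 2m-1$ every attaching map is in the stable range, so $X$ is integrally a suspension $\Sigma X'$; any suspension is rationally a wedge of spheres, and then \cite[Lemma~5.1]{HT3} is invoked as a black box to upgrade this rational statement to the asserted local one. Your approach instead unpacks the argument directly via a skeleton-by-skeleton induction, using Hilton's theorem to kill the higher Whitehead products (the bound $n\leq 2m-1$ is exactly what puts weight-$\geq 2$ brackets out of range) and Serre's $p$-torsion bound from Theorem~\ref{thm:1stptorssphere} to kill the off-diagonal pinched components, then a Smith-normal-form argument on the remaining degree matrix. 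The advantage of your version is that it is self-contained and makes the role of the prime bound $\frac{n-m+3}{2}$ completely transparent; the paper's version is much shorter but defers the work to the cited reference. Your argument is in fact closer in spirit to the paper's proof of the next lemma, Lemma~\ref{lem:highlyconnCWtorsWandM}, which also proceeds by filtering $X$ and analysing attaching maps componentwise.

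One minor slip: a weight-$2$ bracket $[k_{i_{1}},k_{i_{2}}]$ has target $S^{k_{i_{1}}+k_{i_{2}}-1}$, so $|w|\geq 2m-1$ rather than $|w|\geq 2m$ as you wrote; the conclusion is unaffected since $k-1\leq 2m-2<2m-1$.
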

\begin{proof}
By assumption on the dimension of $X$, the attaching map for each cell is in the stable range. Therefore, integrally, $X \simeq \Sigma X'$ for some $CW$-complex $X'$. Rationally, any suspension is homotopy equivalent to a wedge of spheres. Therefore, localised away from primes appearing as $p$-torsion in $H_*(X)$ and primes $p \leq \frac{n-m+3}{2}$, \cite[Lemma 5.1]{HT3} implies that $X \in \mathcal{W}$.
\end{proof}

A torsion analogue of Lemma ~\ref{lem:wedgeofsphereshighlyconnCW} can be proved if we restrict connectivity and dimension. This requires an extension of the argument in \cite[Lemma 5.1]{HT3}. Recall that $\mathcal{M}$ is the collection of topological spaces homotopy equivalent to a finite type wedge of spheres and Moore spaces.

\begin{lemma}
    \label{lem:highlyconnCWtorsWandM}
    Let $X$ be an $(m-1)$-connected $CW$-complex of dimension $n$, where $m \geq 3$ and $n \leq 2m-2$. Localise away from primes $p \leq \frac{n-m+3}{2}$. Then $X \in \mathcal{M}$.
\end{lemma}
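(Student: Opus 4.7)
My plan is to induct on the dimension $n$ of $X$. The base case $n = m$ is immediate: an $(m-1)$-connected $CW$ complex of dimension $m$ is homotopy equivalent to a wedge of copies of $S^m$, which lies in $\mathcal{M}$.

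For the inductive step, let $Y = X^{(n-1)}$, so that $X = Y \cup_\phi (\bigvee e^n)$ for some attaching map $\phi\colon \bigvee S^{n-1} \to Y$. Since $Y$ is $(m-1)$-connected of dimension $n-1$, and $\frac{(n-1)-m+3}{2} \leq \frac{n-m+3}{2}$, the induction hypothesis gives a decomposition
\[Y \simeq \bigvee_\alpha S^{m_\alpha} \vee \bigvee_\beta P^{l_\beta}(p_\beta^{r_\beta})\]
with $m \leq m_\alpha \leq n-1$ and $m+1 \leq l_\beta \leq n-1$. Each wedge summand is a suspension of an $(m-2)$-connected space, so Hilton's theorem identifies $\pi_{n-1}(Y) \cong \bigoplus_i \pi_{n-1}(Y_i)$ in the range $n \leq 2m-2$: the Whitehead-product error terms involve $\pi_{n-1}$ of suspensions of smashes of at least two such spaces, which are $(2m-2)$-connected and so vanish. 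Each $\phi_j$ accordingly decomposes as $\phi_j = \sum_i \phi_{j,i}$ with $\phi_{j,i}\colon S^{n-1} \to Y_i$.

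The next step is to show that, after our localisation, every component $\phi_{j,i}$ is null except possibly when $Y_i = S^{n-1}$. For sphere summands $Y_i = S^{m_\alpha}$ with $m_\alpha < n-1$, the constraint $n \leq 2m-2$ excludes a $\mathbb{Z}$ summand in $\pi_{n-1}(S^{m_\alpha})$, and the prime condition $p > \frac{(n-1) - m_\alpha + 3}{2}$ of Theorem~\ref{thm:1stptorssphere} is implied by $p > \frac{n-m+3}{2}$ since $m_\alpha \geq m$. For Moore space summands $Y_i = P^{l_\beta}(p_\beta^{r_\beta})$, the estimate $l_\beta \geq m+1$ together with $n \leq 2m-2$ yields $l_\beta \geq (n+3)/2$, verifying the dimensional hypotheses $l_\beta \leq n-1 \leq 2l_\beta - 4$ of Lemma~\ref{lem:htpygrpsMoore}; the numerical condition on the prime is again implied by our choice. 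The boundary case $l_\beta = n-1$ requires $n \geq m+2 \geq 5$, which is exactly where Lemma~\ref{lem:htpygrpsMoore} just barely applies; for $n = 4$ the Moore space case cannot occur because $l_\beta \geq m+1 \geq 4 > n-1$. Consequently, up to homotopy, $\phi$ factors through the top-sphere part $\bigvee_{m_\alpha = n-1} S^{n-1}$ of $Y$, and is encoded by a matrix $D$ of degrees over the localised PID $R = \mathbb{Z}[1/q : q \text{ inverted}]$.

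Finally, Smith normal form over $R$ reduces $D$ to $\mathrm{diag}(d_1, \ldots, d_r, 0, \ldots, 0)$; the required changes of basis are realised by self-equivalences of $\bigvee S^{n-1}$ since, in the Hilton range, the self-map monoid is precisely $M(R)$. An $n$-cell attached to $S^{n-1}$ by a degree $d_i$ map forms a Moore space $P^n(d_i)$, which further splits as $\bigvee_p P^n(p^{e_{i,p}})$ by the Chinese Remainder Theorem (applied over $R$); unused target $S^{n-1}$'s remain as wedge summands, and unattached source cells contribute $S^n$ summands. Combining these with the non-top-sphere part of $Y$ gives $X \in \mathcal{M}$.

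The most delicate step is the boundary Moore-space case $l_\beta = n-1$, where one must verify that Lemma~\ref{lem:htpygrpsMoore} applies at exactly the right dimensional edge and that the small-dimensional case $n = 4$ avoids the issue. The remaining steps reduce to standard Hilton--Milnor analysis and Smith normal form over a PID.
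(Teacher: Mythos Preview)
Your proof is correct, but it takes a genuinely different route from the paper's. The paper builds $X$ via its \emph{homology decomposition}: a sequence of cofibrations $M_t\to X_{t-1}\to X_t$ in which $M_t$ is a wedge of $(t-1)$-spheres and $t$-dimensional Moore spaces and each $f_t$ is \emph{homologically trivial}. After localising, the paper shows that every component of $f_t$ into a sphere or Moore-space summand of $X_{t-1}$ is null homotopic, using all four vanishing results (Theorem~\ref{thm:1stptorssphere} and Lemmas~\ref{lem:htpygrpsMoore}, \ref{lem:htpygrpcoeffsphere}, \ref{lem:htpygrpcoeffMoore}); hence $f_t\simeq\ast$ and $X_t\simeq X_{t-1}\vee\Sigma M_t$, so inductively $X\in\mathcal{M}$.

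Your approach instead climbs the \emph{skeletal} filtration, so the attaching maps come only from spheres and are \emph{not} homologically trivial in general. You therefore avoid the homotopy-with-coefficients lemmas entirely (only Theorem~\ref{thm:1stptorssphere} and Lemma~\ref{lem:htpygrpsMoore} are needed), but must pay for this by absorbing the residual degree maps $\bigvee S^{n-1}\to\bigvee S^{n-1}$ via Smith normal form over the localised ring to produce the Moore-space summands explicitly. The trade-off is clean: the paper's homology decomposition packages the Moore spaces for free at the cost of needing maps out of Moore spaces to vanish, while your argument is more elementary in its inputs but must manufacture the Moore spaces by hand at the end.
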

\begin{proof}
        Since $X$ is simply connected, it has a homology decomposition (see \cite[Chapter 4.H]{Ha}, for example), which is a sequence of homotopy cofibrations \[M_t \xrightarrow{f_t} X_{t-1} \rightarrow X_t,\] for $2 \leq t \leq n$, with $X_n = X$, each $M_t$ is a wedge of $(t-1)$-dimensional spheres and $t$-dimensional Moore spaces, and $f_t$ is homologically trivial. Since $X$ is $(m-1)$-connected, $X_1,\cdots,X_{m-1} = *$.
        
        The proof is by induction. Localise away from primes $p \leq \frac{n-m+3}{2}$. When $t = m$, we obtain a homotopy cofibration \[\bigvee\limits_{i=1}^{l_m} S^{m-1} \vee \bigvee\limits_{j=1}^{l'_m}P^{m}(p_j^{r_j}) \rightarrow * \rightarrow X_m,\] implying that $X_m \in \mathcal{M}$. Suppose that for $t < s$, \[X_t \simeq \bigvee\limits_{i=1}^{k_t} S^{n_i} \vee \bigvee\limits_{j=1}^{k'_t} P^{n'_j+1}(q_j^{r'_j}),\] where $m \leq n_i, n'_j \leq t$ for each $i,j$. For $t=s$, there is a homotopy cofibration \[\bigvee\limits_{\imath=1}^{l_s} S^{s-1} \vee \bigvee\limits_{\jmath=1}^{l'_s} P^{s}(p_\jmath^{r_\jmath}) \xrightarrow{f_s} \bigvee\limits_{i=1}^{k_{s-1}} S^{n_i} \vee \bigvee\limits_{j=1}^{k'_{s-1}} P^{n'_j+1}(q_j^{r'_j})  \rightarrow X_s.\] Since $s \leq 2m-2$ and each $n_{i},n'_{j}\leq s-1$, the Hilton-Milnor Theorem implies that \[f_s \simeq \sum_{i=1}^{k_{s-1}} f_s^i + \sum_{j=1}^{k'_{s-1}} g_s^j,\] where $f_{s}^i$ is the composite \[f^i_s: \bigvee\limits_{\imath=1}^{l_s} S^{s-1} \vee \bigvee\limits_{\jmath=1}^{l'_s} P^{s}(p_\jmath^{r_\jmath})\xrightarrow{f_s} \bigvee\limits_{i=1}^{k_{s-1}} S^{n_i} \vee \bigvee\limits_{j=1}^{k'_{s-1}} P^{n'_j+1}(q_i^{r'_j}) \xrightarrow{p_i} S^{n_i}\] and $p_i$ is the pinch map, while $g_{s}^j$ is the composite \[g_s^j:\bigvee\limits_{\imath=1}^{l_s} S^{s-1} \vee \bigvee\limits_{\jmath=1}^{l'_s} P^{s}(p_\jmath^{r_\jmath}) \xrightarrow{f_s} \bigvee\limits_{i=1}^{k_{s-1}} S^{n_i} \vee \bigvee\limits_{j=1}^{k'_{s-1}} P^{n'_j+1}(q_j^{r'_j}) \xrightarrow{p'_j} P^{n'_j+1}(q_j^{r'_j})\] and $p'_j$ is the pinch map. 
        
        Consider $f_s^i$. If $n_i = s-1$ then, as $f_s$ is homologically trivial, $f_s^i$ is null homotopic by the Hurewicz isomorphism. If $n_i < s-1$ then, localised away from primes $p \leq \frac{n-m+3}{2}$, Theorem ~\ref{thm:1stptorssphere} implies that $\pi_{s-1}(S^{n_i})$ is trivial and Lemma ~\ref{lem:htpygrpcoeffsphere} implies that $\pi_s(S^{n_i};\mathbb{Z}/p_{\jmath}^{r_{\jmath}}\mathbb{Z})$ is trivial. Hence $f_s^i$ is null homotopic. 

      Now consider $g_s^j$. If $n'_j = s-1$ then, as $f_s$ is homologically trivial, $g_s^j$ is null homotopic by the Hurewicz isomorphism. If $n'_j < s-1$ then, localised away from primes $p \leq \frac{n-m+3}{2}$, Theorem ~\ref{lem:htpygrpsMoore} implies that $\pi_{s-1}(P^{n_j'+1}(p_j^{r_j}))$ is trivial and Lemma ~\ref{lem:htpygrpcoeffsphere} implies that $\pi_s(P^{n_j'+1}(p_j^{r_j});\mathbb{Z}/p_{\jmath}^{r_{\jmath}}\mathbb{Z})$ is trivial. Hence $g_s^j$ is null homotopic. 
      
      Therefore $f_s$ is null homotopic, implying that $X \in \mathcal{M}$.
\end{proof}

\section{Highly connected Poincar\'e duality complexes and moment-angle manifolds associated to minimally non-Golod complexes}
\label{sec:highyconnPDcomplex}

In this section, we extend the reach of Corollary~\ref{cor:MbarwedgeofsphereandMoore} to Poincar\'{e} duality complexes that satisfy its hypotheses but only after localisation away from a finite set of primes. 

\noindent
\textbf{Highly connected Poincar\'e duality complexes}. Let $M$ be an $(m-1)$-connected Poincar\'e duality complex such that $m\geq 2$ and $H_*(\overline{M})$ contains a $\mathbb{Z}$ summand. Let $k$ be the least dimension of such a $\mathbb{Z}$ summand. For Poincar\'e duality complexes in a certain dimensional range, we will show that after sufficient localisation: 
\begin{enumerate} 
    \item $\overline{M} \in \mathcal{M}$; 
    \item there is a map $M \rightarrow S^k$ that has a right homotopy inverse. 
\end{enumerate} The results in Section ~\ref{sec:localdecomphighlyconnCW} will be used to show that $(1)$ holds. To show that $(2)$ holds, we use the following slight reformulation of  two statements in \cite[Theorems 6.3 and 7.5]{T2}, the proofs of which are easily adapted to the case below.

\begin{theorem}
\label{thm:localLHIsphere}
    Let $M$ be an $(m-1)$-connected Poincar\'e duality complex of dimension $n$, where $2 \leq m < n$. Let $k$ be the least integer such that $H_k(M)$ contains a $\mathbb{Z}$ summand and suppose that $k < n$. If $k$ is even, suppose there exists a generator $x \in H^k(M)$ such that $x^2 = 0$. Localise away from primes $p$ appearing as $p$-torsion in $H_i(M)$ with $i < k$ and primes $p \leq \frac{n-k+3}{2}$ if $k$ is odd, or primes $p \leq \frac{n-k+4}{2}$ if $k$ is even. Then there exists a map $S^k \rightarrow M$ with a left homotopy inverse and the loop map 
    \(\namedright{\Omega\overline{M}}{}{\Omega M}\) 
    has a right homotopy inverse.
\end{theorem}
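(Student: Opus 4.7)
The plan is to adapt the proofs of \cite[Theorems 6.3 and 7.5]{T2} to the present hypotheses. The essential change is that here $M$ is assumed $(m-1)$-connected rather than $(k-1)$-connected, so a preliminary localisation step is needed to reduce to the higher-connectivity setting. After localising away from primes appearing as $p$-torsion in $H_i(M)$ for $i<k$, the groups $H_i(M_{(P)})$ vanish for $m\leq i<k$, Poincar\'e duality is preserved, and by the Hurewicz theorem the $\mathbb{Z}$ summand of $H_k(M)$ is realised by a map $\sigma\colon S^k\to M$.

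The substantive work is the construction of a left homotopy inverse $h'\colon M\to S^k$ for $\sigma$, done via obstruction theory. I would start with the map $f\colon M\to K(\mathbb{Z},k)$ classifying the cohomology class $x\in H^k(M;\mathbb{Z})$ Poincar\'e dual to $\sigma_\ast[S^k]$, and attempt to lift $f$ through the canonical map $\eta\colon S^k\to K(\mathbb{Z},k)$. The homotopy fibre $F$ of $\eta$ satisfies $\pi_i(F)\cong\pi_i(S^k)$ for $i\geq k+1$, and the obstructions to lifting lie in the cohomology groups $H^{i+1}(M;\pi_i(F))$ for $k+1\leq i\leq n-1$. By Theorem~\ref{thm:1stptorssphere}, $\pi_i(S^k)$ contains no $p$-torsion for $i<k+2p-3$. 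For $k$ odd, the condition $p>(n-k+3)/2$ rearranges to $2p-3>n-k$, so the relevant homotopy groups vanish after localisation throughout the necessary degree range; combined with the vanishing of the associated Tor terms, every obstruction is killed and the lift $h'$ exists.

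For $k$ even the hypothesis $x^2=0$ is essential: the fundamental class $\iota_k\in H^k(K(\mathbb{Z},k);\mathbb{Z})$ satisfies $\iota_k^2\neq 0$, whereas $H^{2k}(S^k;\mathbb{Z})=0$, so any lift through $\eta$ must send $\iota_k$ to a class squaring to zero. With $x^2=0$, $f$ factors through the first Postnikov stage $E$ in which $\iota_k^2$ has been killed, and one then lifts through $S^k\to E$ instead. The additional Postnikov stage shifts the connectivity of the relevant fibre by one, which accounts for the slightly weaker prime bound $(n-k+4)/2$ in the even case.

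Once $h'$ is constructed, since $k<n$ the map $\sigma$ factors through $\overline{M}$, producing $S^k\to\overline{M}$ with left homotopy inverse given by $\overline{M}\to M\xrightarrow{h'} S^k$. The second conclusion, that $\Omega\overline{M}\to\Omega M$ has a right homotopy inverse, then follows from \cite[Theorem 1.1]{T2} applied in the localised setting (or equivalently from Theorem~\ref{Minert} in the present paper), since under these hypotheses the attaching map for the top cell of $M$ is inert. The main technical obstacle is the careful bookkeeping of the prime bounds: one must ensure that both the $\pi_i(S^k)\otimes\mathbb{Z}/q^r\mathbb{Z}$ summands and the $\mathrm{Tor}(\pi_{i-1}(S^k),\mathbb{Z}/q^r\mathbb{Z})$ summands of the obstruction groups vanish simultaneously at all relevant degrees, and that the shift between the odd and even cases is tracked consistently through the Postnikov analysis.
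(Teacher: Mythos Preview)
Your proposal is correct and aligns with the paper's approach: the paper's proof is simply a two-line citation of \cite[Lemma~6.1, Theorem~6.3]{T2} for $k$ odd and \cite[Lemma~7.4, Theorem~7.5]{T2} for $k$ even, together with the remark preceding the theorem that these results adapt easily to the present hypotheses. What you have written is essentially a sketch of the content of those cited results and how they adapt, so you are supplying the detail that the paper deliberately suppresses; there is no substantive divergence in method.
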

\begin{proof}
    The case where $k$ is odd follows from \cite[Lemma 6.1]{T2} and \cite[Theorem 6.3]{T2}. The case where $k$ is even follows from \cite[Lemma 7.4]{T2} and \cite[Theorem 7.5]{T2}.
\end{proof}

\begin{example}
    Connecting back to Section~\ref{sec:connsumgyration}, let $M$ be an $(m-1)$-connected Poincar\'e duality complex of dimension $n$, where $2 \leq m < n$. Localise away from the primes in the statement of Theorem ~\ref{thm:localLHIsphere}. Then there is a map $S^k \rightarrow M$ with a left homotopy inverse. For any $k \geq 2$ and any~$\tau$, by Theorem~\ref{gyrationcoH} there is a map $S^k \rightarrow \mathcal{G}_{\tau}^k(M)$ with a left homotopy inverse. The attaching map of the top cell of $\mathcal{G}_{\tau}^k(M)$ is then inert by \cite[Theorem 1.1]{T2}.
\end{example}

\begin{theorem}
    \label{thm:PDcomplxcup2}
    Let $M$ be an $(m-1)$-connected Poincar\'e duality complex of dimension $n$, where $2 \leq m < n$ and $n \leq 3m-1$. Let $k$ be the least integer such that $H_k(M)$ contains a $\mathbb{Z}$ summand, and suppose that $k < n$. If $k$ is even and $k=m = n-m$, suppose there exists a generator $x \in H^k(M)$ such that $x^2 = 0$. Localise away from primes $p$ appearing as $p$-torsion in $H_*(M)$ and primes $p \leq \frac{n-k+3}{2}$ if $k$ is odd, or primes $p \leq \frac{n-k+4}{2}$ if $k$ is even. Then there is a homotopy fibration 
   \[\nameddright{A\vee(B\wedge\Omega S^{m})}{}{M}{h'}{S^{k}},\] where $A$ and $B$ are wedges of spheres that can be explicitly enumerated as in Corollary ~\ref{cor:MbarwedgeofsphereandMoore}.
   Moreover, this homotopy fibration splits after looping to give a homotopy equivalence 
   \[\Omega M\simeq\Omega S^{k}\times\Omega(A\vee(B\wedge\Omega S^{k})).\]
\end{theorem}

\begin{proof}
The dimension restriction on~$M$ implies that, by Poincar\'e duality and the universal coefficient theorem, $\overline{M}$ is a $(m-1)$-connected $CW$-complex of dimension $d \leq 2m-1$. Localise away from the primes in the statement of the theorem. By Lemma ~\ref{lem:wedgeofsphereshighlyconnCW}, $\overline{M}\in \mathcal{W}$, and Theorem ~\ref{thm:localLHIsphere} implies that there exists a map $S^k \rightarrow M$ with a left homotopy inverse. Hence, Corollary ~\ref{cor:MbarwedgeofsphereandMoore} and Remark ~\ref{localMfib} imply the existence of the asserted homotopy fibration and loop space decomposition.
\end{proof}

In \cite{BB2}, an explicit loop space decomposition of $(m-1)$-connected Poincar\'e duality complexes of dimension $n \leq 3m-2$ was given after localisation away from a finite set of primes. This was obtained by giving a presentation of $H_*(\Omega M)$ as a quadratic algebra and using an explicit basis of this algebra to define a map from a product of looped spheres to $\Omega M$ which was shown to be a homotopy equivalence. Our approach reverses this: we first find a homotopy equivalence for $\Omega M$ in the slightly greater range $n\leq 3m-1$ and use this to calculate $H_{\ast}(\Omega M)$. 

\begin{theorem}
    \label{thm:PDcmplxcuplength2loophom}
     Let $M$ be an $(m-1)$-connected, closed Poincar\'e duality complex of dimension $n$, where $2 \leq m < n$ and $n \leq 3m-1$. Let $k$ be the least integer such that $H_k(M)$ contains a $\mathbb{Z}$ summand, and suppose that $k < n$. If $k$ is even and $k=m = n-m$, suppose there exists a generator $x \in H^k(M)$ such that $x^2 = 0$. Localise away from primes $p$ appearing as $p$-torsion in $H_*(M)$ and primes $p \leq \frac{n-k+3}{2}$ if $k$ is odd, or primes $p \leq \frac{n-k+4}{2}$ if $k$ is even. Then there is an isomorphism of Hopf algebras \[H_*(\Omega M) \cong T(u_1,\cdots,u_l)/(I),\] where $u_1,\cdots,u_l$ correspond to $\mathbb{Z}$ summands in $H_*(\overline{M};\mathbb{Z})$ and $I$ is a sum of monomials in $u_1,\cdots,u_l$.
\end{theorem}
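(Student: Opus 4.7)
The plan is to combine the local version of Theorem~\ref{Minertloophlgy} with the setup developed in Theorem~\ref{thm:PDcomplxcup2}. First, I would localise away from the primes specified. Under these hypotheses, Poincar\'e duality together with the universal coefficient theorem forces $\overline{M}$ to be $(m-1)$-connected of dimension at most $2m-1$, so Lemma~\ref{lem:wedgeofsphereshighlyconnCW} delivers $\overline{M}\in\mathcal{W}$, i.e.\ a finite-type wedge of spheres $\overline{M}\simeq\bigvee_{i=1}^{l}S^{n_{i}}$. In particular $\overline{M}$ is a suspension and $\widetilde{H}_{*}(\overline{M})$ is a free module whose free generators $v_{1},\dots,v_{l}$ correspond bijectively to the $\mathbb{Z}$ summands of $H_{*}(\overline{M})$. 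Next, Theorem~\ref{thm:localLHIsphere} supplies a map $S^{k}\to M$ with a left homotopy inverse, so (the local version of) $M$ belongs to $\mathcal{A}$.

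With that in place, Theorem~\ref{Minert} gives inertness of the attaching map $f\colon S^{n-1}\to\overline{M}$ for the top cell, and Theorem~\ref{Minertloophlgy} then yields an algebra isomorphism
\[
H_{*}(\Omega M)\;\cong\;T\bigl(\Sigma^{-1}\widetilde{H}_{*}(\overline{M})\bigr)\big/\bigl(\mathrm{Im}(\widetilde{f}_{*})\bigr).
\]
Because $\overline{M}$ is a wedge of spheres, hence a suspension of a co-$H$-space, this is in fact an isomorphism of Hopf algebras. Writing $u_{i}=\Sigma^{-1}v_{i}$, the ambient algebra becomes the free tensor algebra $T(u_{1},\dots,u_{l})$ on generators in bijection with the $\mathbb{Z}$ summands of $H_{*}(\overline{M})$, which matches the statement.

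It remains to identify $(\mathrm{Im}(\widetilde{f}_{*}))$ with $(I)$ for a sum of monomials $I$. The adjoint $\widetilde{f}\colon S^{n-2}\to\Omega\overline{M}$ sends the fundamental class $\iota_{n-2}$ to a single homology class $I:=\widetilde{f}_{*}(\iota_{n-2})\in H_{n-2}(\Omega\overline{M})=T(u_{1},\dots,u_{l})_{n-2}$. Any element of a free tensor algebra is uniquely a finite $\mathbb{Z}$-linear combination of monomials in the generators, so $I$ automatically has the required form, and $\mathrm{Im}(\widetilde{f}_{*})$ is the cyclic submodule generated by $I$. Taking two-sided ideals yields $(\mathrm{Im}(\widetilde{f}_{*}))=(I)$, producing the asserted one-relator presentation.

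The main thing to verify carefully is that the inputs to Theorem~\ref{Minertloophlgy} survive localisation: specifically, that membership in $\mathcal{A}$ (hence inertness of $f$) and Berstein's identification $H_{*}(\Omega C)\cong T(\Sigma^{-1}\widetilde{H}_{*}(C))$ for a co-$H$-space $C$ both persist after inverting the stated primes. The latter is formulated for an arbitrary coefficient ring, and the former is exactly what Theorem~\ref{thm:PDcomplxcup2} and Remark~\ref{localMfib} arrange, so no new topology is required --- only a careful bookkeeping of coefficients --- and this step should be the routine but most delicate part of the argument.
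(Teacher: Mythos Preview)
Your proposal follows the paper's approach through the main structural steps: localise, apply Lemma~\ref{lem:wedgeofsphereshighlyconnCW} to get $\overline{M}\simeq\bigvee_{i}S^{n_{i}}$, invoke Theorem~\ref{thm:localLHIsphere} for the sphere retraction, and then apply Theorem~\ref{Minertloophlgy} to obtain the one-relator presentation $T(u_{1},\dots,u_{l})/(\mathrm{Im}(\widetilde{f}_{*}))$ as a Hopf algebra. All of that is correct and matches the paper.

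Where you diverge is in the final step. You observe that any element of a free tensor algebra is tautologically a linear combination of monomials, and conclude that ``$I$ is a sum of monomials'' holds for free. Under a literal reading of the theorem statement this is valid, and your argument is then complete and shorter than the paper's. However, the paper evidently intends more content in that phrase: it invokes the Hilton--Milnor theorem, using the hypothesis $n\leq 3m-1$, to decompose $f$ as $\sum g_{i}+\sum a_{j}W_{j}$ with each $W_{j}$ an iterated Whitehead product of length $2$ or $3$ and each $a_{j}\in\mathbb{Z}$. After adjointing, this exhibits $I$ explicitly as an integer combination of commutators (hence of monomials of length at most~$3$) in the $u_{i}$, determined by the cup-product and triple-product structure of $M$. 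Your argument does not produce this explicit form, and in particular does not show the coefficients lie in $\mathbb{Z}$ rather than the localised ring, nor that the monomials have bounded length.

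So: your proof is correct for the statement as written and is considerably more economical, but the paper's Hilton--Milnor argument buys a genuinely computable description of the relation $I$ that your tautological step does not. If the intended meaning of ``sum of monomials'' is the explicit commutator form, then your proposal has a gap at exactly this point; if the literal meaning suffices, your proof is a legitimate simplification.
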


\begin{proof}
    Localise away from $p$-torsion in $H_*(M)$ and primes $p \leq \frac{n-k+3}{2}$ if $k$ is odd, or primes $p \leq \frac{n-k+4}{2}$ if $k$ is even. By Lemma ~\ref{lem:wedgeofsphereshighlyconnCW}, $\overline{M}\in \mathcal{W}$. Write $\overline{M} \simeq \bigvee\limits_{i=1}^{l}S^{n_i}$, where $k \leq n_i \leq n-k \leq 2m-1$. There is a homotopy cofibration \[S^{n-1} \xrightarrow{f} \bigvee\limits_{i=1}^{l}{S^{n_i}} \xrightarrow{i} M,\] where $f$ attaches the $n$-cell to $M$ and $i$ is the inclusion of the $(n-1)$-skeleton. 

   By Theorem ~\ref{thm:localLHIsphere}, $\Omega i$ has a right homotopy inverse. Therefore Theorem ~\ref{Minertloophlgy} implies there is an isomorphism of Hopf algebras \[H_*(\Omega M) \cong T(u_1,\cdots,u_l)/(\mathrm{Im}(\tilde{f}_*)),\] where $|u_i| = n_i-1$. It remains to show that $\mathrm{Im}(\tilde{f}_*)$ is generated by a sum of monomials in $u_1,\cdots,u_l$. 
   
   Let $p_i:\bigvee\limits_{i=1}^{l}{S^{n_i}} \rightarrow S^{n_i}$ be the pinch map and let $k_i:S^{n_i} \rightarrow \bigvee\limits_{i=1}^{l}{S^{n_i}}$ be the inclusion. Since $n \leq 3m-1$, the Hilton-Milnor Theorem implies that \[f \simeq \sum _{i=1}^{l} g_i + \sum_{j=1}^{l'} W^2_j \circ h_j + \sum_{j=1}^{l'} W^3_j \circ h'_j\] where: \begin{enumerate}
       \item $g_i$ is the composite $p_i \circ f$;
       \item each $W^2_j:S^{n_1+n_2-1} \rightarrow \bigvee_{i=1}^l S^{n_i}$ is a Whitehead product of the form $[k_{i_1},k_{i_2}]$;
       \item $h_j \in \pi_{n-1}(S^{n_{i_1}+n_{i_2}-1})$;
       \item each $W^3_j:S^{n_1+n_2+n_3-2} \rightarrow \bigvee_{i=1}^l S^{n_i}$ is a Whitehead product of length $3$ involving the maps $k_{i_1}$, $k_{i_2}$ and $k_{i_3}$;
       \item $h'_j \in \pi_{n-1}(S^{n_{i_1}+n_{i_2}+n_{i_3}-2})$.
   \end{enumerate} 

   Consider $h_j$. By assumption $n-1 < 3m-2$ and $n_{i_1}+n_{i_2}-1 \geq 2m+1$, implying that $n-1 < 2(n_{i_1}+n_{i_2}-1)-1$. Therefore, $\pi_{n-1}(S^{n_{i_1}+n_{i_2}-1})$ is torsion unless $n-1 = n_{i_1}+n_{i_2}-1$. Since we have localised away from primes $p \leq \frac{n-k+3}{2}$ if $k$ is odd, or primes $p \leq \frac{n-k+4}{2}$ if $k$ is even, it follows from Theorem \ref{thm:1stptorssphere} that $h_j$ is null homotopic, unless $n-1 = n_{i_1}+n_{i_2}-1$ and $h_j$ is a multiple of the identity map. A similar argument shows that $h'_j$ is null homotopic, unless $n-1 = n_{i_1}+n_{i_2}+n_{i_3}-2$ and $h'_j$ is a multiple of the identity map. Therefore, $f \simeq \sum_{i=1}^{l} g_i + \sum_{j=1}^{l'} a_j W_j$, where each $W_j$ is a Whitehead product of length $2$ or $3$ involving the inclusions $k_{i_{j}}$ and $a_j \in \mathbb{Z}$.
   
   Under adjunction, $\widetilde{f} \simeq \sum_{i=1}^{l} \widetilde{g}_i + \sum_{j=1}^{l'} a_j \widetilde{W}_j$, where $\widetilde{g}_{i}$ and $\widetilde{W}_{j}$ are the adjoints of $g_{i}$ and $W_{j}$ respectively. Let $\lambda \in H_*(S^{n-1})$ be a generator. Since $n_{i}<n-1$, the map $g_{i}$ represents either a torsion homotopy class in $\pi_{n-1}(S^{n_{i}})$ or an integral summand if $n=2n_{i}$. But we have localised away from all primes that could contribute a torsion class by hypothesis, so either $g_{i}$ is null homotopic or it is a map of non-trivial Hopf invariant. Therefore, either $(\widetilde{g}_i)_{\ast}$ maps $\lambda$ to zero or to some multiple of $u_i^2$. Next, the adjoint of each inclusion $k_{i}$ has Hurewicz image $u_{i}$. Therefore, as $\widetilde{W}_j$ is a Samelson product of the adjoints of the inclusions $k_{i_{j}}$, in homology it sends a generator $\lambda \in H_*(S^{n-1})$ to a commutator in the $u_{i_{j}}$'s, implying that its image in homology is a sum of monomials in $u_1,\cdots,u_l$. Hence, $\mathrm{Im}(\widetilde{f}_*)$ is generated by a sum of monomials in $u_1,\cdots,u_l$.
\end{proof}

\begin{remark}
\label{rmk:quadrel}
Note that the proof of Theorem~\ref{thm:PDcmplxcuplength2loophom} strengthens if $n \leq 3m-2$. In that case, for dimensional reasons the decomposition of $f$ does not contain any Whitehead products of length~$\geq 3$, implying that $I$ is a quadratic relation.
\end{remark} 

If the dimension in Theorem ~\ref{thm:PDcomplxcup2} is slightly restricted then there is an analogous loop space decomposition that allows for large torsion in homology.

\begin{theorem}
\label{thm:loopPDcomplextors}
        Let $M$ be an $(m-1)$-connected Poincar\'e duality complex of dimension $n$, where $3 \leq m < n-m$ and $n \leq 3m-2$. Let $k$ be the least integer such that $H_k(M)$ contains a $\mathbb{Z}$ summand, and suppose that $k < n$. If $k$ is even and $k=m = n-m$, suppose there exists a generator $x \in H^k(M)$ such that $x^2 = 0$. Localise away from primes $p$ appearing as $p$-torsion in $H_l(M)$ for $l < k$, and primes $p \leq \frac{n-k+3}{2}$ if $k$ is odd, or primes $p \leq \frac{n-k+4}{2}$ if $k$ is even. Then there is a homotopy fibration 
   \[\nameddright{A\vee(B\wedge\Omega S^{k})}{}{M}{h'}{S^{k}},\] where $A$ and $B$ are wedges of spheres and Moore spaces that can be explicitly enumerated as in Corollary ~\ref{cor:MbarwedgeofsphereandMoore}.
   Moreover, this homotopy fibration splits after looping to give a homotopy equivalence 
   \[\Omega M\simeq\Omega S^{k}\times\Omega(A\vee(B\wedge\Omega S^{k})).\]
\end{theorem}
\begin{proof} 
The dimension restriction on $M$ implies that, by Poincar\'e duality and the universal coefficient theorem, $\overline{M}$ is a $(m-1)$-connected $CW$-complex of dimension $d \leq 2m-2$. Localise away from the primes appearing in the statement of the theorem. By Lemma ~\ref{lem:highlyconnCWtorsWandM}, $\overline{M}\in \mathcal{M}$, and Theorem ~\ref{thm:localLHIsphere} implies that there is a map $S^k \rightarrow M$ with a left homotopy inverse. Hence, Corollary ~\ref{cor:MbarwedgeofsphereandMoore} and Remark ~\ref{localMfib} imply the result.
 \end{proof}

\noindent
\textbf{Moment-angle manifolds associated to minimally non-Golod complexes}. We extend the family of moment-angle manifolds for which we can apply Corollary ~\ref{cor:MbarwedgeofsphereandMoore} by localising. A simplicial complex $K$ on $[m]$ is called \emph{Golod} over a field $\mathbb{K}$ if all cup products and higher Massey products in $H^*(\zk;\mathbb{K})$ are trivial, and $K$ is \emph{minimally non-Golod} if $K\setminus i$ is Golod for all $i \in [m]$. Examples of minimally non-Golod complexes include the neighbourly spheres considered in Section ~\ref{sec:examples}, as proved in \cite[Proposition~3.6]{L} if $K$ is the dual of a boundary polytope and \cite[Theorem 6.2]{ST} in general.

Recall that if $K$ is a triangulation of $S^n$ on $m$ vertices then $\zk$ is a manifold of dimension $n+m+1$. There is a homotopy cofibration \[S^{n+m} \xrightarrow{f} \overline{\zk} \rightarrow \zk\] where $f$ is the attaching map of the top cell. We first show that if $K$ is minimally non-Golod then $\overline{\zk}$ is rationally homotopy equivalent to a wedge of spheres. 

\begin{lemma}
    \label{lem:ratminnonGolodskel}
    Let $K$ be a triangulation of $S^n$ on $[m]$ that is minimally non-Golod. Then, rationally, $\overline{\zk} \in \mathcal{W}$.  
\end{lemma}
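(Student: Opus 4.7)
The plan combines the Bahri--Bendersky--Cohen--Gitler (BBCG) suspension splitting with the Golodness of $K \setminus v$ for every vertex $v$. For any simplicial complex $K$ on $[m]$ there is a homotopy equivalence
\[\Sigma \zk \simeq \bigvee_{\emptyset \neq I \subseteq [m]} \Sigma^{|I|+1}|K_I|.\]
Since $K$ triangulates $S^n$, the summand at $I = [m]$ is $\Sigma^{m+1}|K| \simeq \Sigma^{m+n+1}$ and contains the top cell of the manifold $\zk$. Removing the top cell leaves the analogous suspension splitting
\[\Sigma \overline{\zk} \simeq \bigvee_{\emptyset \neq I \subsetneq [m]} \Sigma^{|I|+1}|K_I|,\]
and the argument of~\cite[Theorem 6.6]{ST} upgrades this to an unsuspended equivalence
\[\overline{\zk} \simeq \bigvee_{\emptyset \neq I \subsetneq [m]} \Sigma^{|I|+1}|K_I|,\]
since the BBCG coaction on $\Sigma \zk$ desuspends to a co-$H$-structure on $\overline{\zk}$ once the top cell has been removed.

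Next, I would analyse the full subcomplexes $|K_I|$ appearing in this decomposition. For each proper $I \subsetneq [m]$, pick any $v \in [m] \setminus I$; then $K_I = (K \setminus v)_I$ is a full subcomplex of the Golod complex $K \setminus v$. Applying the BBCG splitting to $K \setminus v$ shows that $\Sigma^{|I|+1}|K_I|$ is a wedge summand of $\Sigma \mathcal{Z}_{K \setminus v}$. Golodness of $K \setminus v$, combined with the formality of moment-angle complexes (Notbohm--Ray) and the cohomological characterization of Golodness (triviality of all cup and higher Massey products in $H^*(\mathcal{Z}_{K \setminus v};\mathbb{Q})$), implies that $\mathcal{Z}_{K \setminus v}$ is rationally a wedge of spheres. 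Consequently every summand in its BBCG splitting, and in particular $\Sigma^{|I|+1}|K_I|$, is rationally a wedge of spheres, so $|K_I|$ is rationally a wedge of spheres.

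Substituting these rational equivalences back into the splitting of $\overline{\zk}$ exhibits $\overline{\zk}$ as a wedge of suspensions of rational wedges of spheres. This is itself a rational wedge of spheres, so $\overline{\zk} \in \mathcal{W}$ rationally, as required.

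The main obstacle I anticipate is the desuspension step in the first paragraph: the BBCG splitting is naturally only a suspension-level statement, and verifying that it desuspends in the minimally non-Golod setting (rather than merely the neighbourly case treated in~\cite{ST}) is where the key technical effort lies. A secondary issue is the implication ``$L$ Golod $\Rightarrow$ each $|L_J|$ rationally a wedge of spheres'', which cannot be argued by direct inheritance of the Golod property to subcomplexes and must instead be deduced from the BBCG splitting of $\mathcal{Z}_L$ together with the rational wedge decomposition of $\mathcal{Z}_L$ itself. If the desuspension step proves delicate outside the neighbourly case, a fallback is to work rationally throughout and appeal to a rational Sullivan-model computation ensuring the suspension splitting descends after localisation at $\mathbb{Q}$.
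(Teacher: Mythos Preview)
Your desuspension step is the genuine gap, and you have correctly identified it yourself. The result \cite[Theorem~6.6]{ST} is stated for \emph{neighbourly} triangulations, and its proof does not obviously transfer to the minimally non-Golod case; your suggested fallback via rational Sullivan models would be substantially harder than what the lemma deserves.

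The paper sidesteps this entirely by a shorter and different route. Rather than seeking an explicit unsuspended BBCG-type splitting of $\overline{\zk}$, the paper only aims to show that $\overline{\zk}$ is (rationally) a co-$H$-space, which already forces it to be a rational wedge of spheres. The argument runs: by Berglund's result (equivalent to your formality-plus-vanishing-products reasoning), Golodness of $K\setminus i$ implies $\mathcal{Z}_{K\setminus i}$ is rationally a co-$H$-space and hence rationally in $\mathcal{W}$; then \cite[Theorem~6.1]{ST}---not Theorem~6.6---says that if every $\mathcal{Z}_{K\setminus i}$ is in $\mathcal{W}$ then $\overline{\zk}$ is a co-$H$-space; finally, any simply-connected co-$H$-space is rationally a wedge of spheres. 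Your entire second paragraph (analysing individual full subcomplexes $|K_I|$) becomes unnecessary under this approach, since no explicit identification of the summands is needed once the co-$H$ property is in hand. The key point you missed is that \cite{ST} contains a result (Theorem~6.1) with exactly the right hypotheses for minimally non-Golod complexes, making the delicate desuspension argument avoidable.
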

\begin{proof}
     An unpublished result of Berglund (see \cite[Proposition 2.4]{St} for a proof) states that $\zk$ is rationally Golod if and only if it is rationally a co-$H$ space. Any co-$H$ space is rationally a wedge of spheres, so any $\zk$ associated to a Golod simplicial complex is rationally in $\mathcal{W}$.

    Since $K$ is minimally non-Golod, $K \setminus i$ is Golod for each $i \in [m]$, implying that $\mathcal{Z}_{K \setminus i} \in \mathcal{W}$. Since each $\mathcal{Z}_{K \setminus i} \in \mathcal{W}$, it follows from \cite[Theorem 6.1]{ST} that $\overline{\zk}$ is a co-$H$ space. Therefore $\overline{\zk}$ is rationally in $\mathcal{W}$.
\end{proof}

The rational result can be used to give a $p$-local decomposition of $\Omega \zk$ when $K$ is minimally non-Golod. Recall that if $K$ is $k$-neighbourly, then $\zk$ is $(2k+2)$-connected, and if there is a minimal missing face of dimension $k+1$, then there is a map $S^{2k+3} \rightarrow \zk$ with a left homotopy inverse. 

\begin{theorem}
    \label{thm:minnonGolod}
    Let $K \neq \partial{\Delta^{n+1}}$ be a triangulation of $S^n$ on $[m]$ that is $k$-neighbourly and minimally non-Golod. Suppose there is a minimal missing face of dimension $k+1$. Then localised away from primes $p$ appearing as $p$-torsion in $H_*(\zk)$ and primes $p \leq \frac{m+n-4k-2}{2}$, there is a homotopy fibration 
   \[\nameddright{A\vee(B\wedge\Omega S^{2k+3})}{}{\zk}{h'}{S^{2k+3}}\] 
   that splits after looping to give a homotopy equivalence 
   \[\Omega \zk\simeq\Omega S^{2k+3}\times\Omega(A\vee(B\wedge\Omega S^{2k+3})).\] The spaces $A$ and $B$ are wedges of spheres that can be explicitly enumerated as in Corollary ~\ref{cor:MbarwedgeofsphereandMoore}.
\end{theorem}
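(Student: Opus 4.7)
The plan is to verify the hypotheses of Corollary~\ref{cor:MbarwedgeofsphereandMoore} for $\zk$ under the stated localisation and then apply its localised form via Remark~\ref{localMfib}. The manifold $\zk$ has dimension $n+m+1$ and, since $K$ is $k$-neighbourly, connectivity $2k+2$, so in the corollary's notation we set $m' = 2k+3$.

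First I handle the sphere retraction. Since there is a minimal missing face of dimension $k+1$, the argument from the proof of Proposition~\ref{prop:neighbourlysphere} using \cite[Example 5.4]{T2} produces a map $\zk \to S^{2k+3}$ with a right homotopy inverse, equivalently a map $S^{2k+3} \to \zk$ with a left homotopy inverse. The dimensional constraint $2 \leq m' < (n+m+1) - m'$ amounts to $n+m \geq 4k+6$, which I will check by combining $K \neq \partial\Delta^{n+1}$ (forcing a lower bound on $m$ relative to $n$) with $k$-neighbourliness and the existence of an $(k+1)$-dimensional missing face.

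Next I show $\overline{\zk} \in \mathcal{W}$ after the prescribed localisation. Poincar\'e duality together with the universal coefficient theorem imply that $\widetilde{H}_j(\zk) = 0$ for $n+m-2k-1 \leq j \leq n+m$, so $\overline{\zk}$ has the homotopy type of a simply connected CW-complex of dimension at most $n+m-2k-2$ with connectivity $2k+2$. By Lemma~\ref{lem:ratminnonGolodskel} the minimally non-Golod hypothesis implies $\overline{\zk}$ is rationally a wedge of spheres, so Lemma~\ref{lem:wedgeofsphereshighlyconnCW} applies with $m_X = 2k+3$ and $n_X = n+m-2k-2$, yielding $\overline{\zk} \in \mathcal{W}$ after inverting the $p$-torsion primes in $H_*(\zk)$ and the primes $p \leq \frac{(n+m-2k-2) - (2k+3) + 3}{2} = \frac{n+m-4k-2}{2}$, which matches the theorem's localisation exactly.

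With both hypotheses verified, Remark~\ref{localMfib} applied to Corollary~\ref{cor:MbarwedgeofsphereandMoore} supplies the asserted homotopy fibration and its loop space splitting, while Proposition~\ref{prop:htpytypeAandB} (with trivial torsion components) identifies $A$ and $B$ as explicit wedges of spheres whose summands are read off from the ranks of $H_*(\zk)$ in the range $2k+3 \leq * \leq n+m-2k-2$. The principal technical hurdle I anticipate is verifying the dimensional inequality $n+m-2k-2 \leq 2(2k+3)-1$ required by Lemma~\ref{lem:wedgeofsphereshighlyconnCW}; if this fails in some range of parameters, I would instead exploit the co-$H$ structure on $\overline{\zk}$ furnished by \cite[Theorem~6.1]{ST} (already used in the proof of Lemma~\ref{lem:ratminnonGolodskel}) together with \cite[Lemma 5.1]{HT3} to deduce the integral wedge-of-spheres decomposition directly from a rational one, bypassing the suspension-reduction step entirely.
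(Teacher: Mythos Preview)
Your proposal is correct and mirrors the paper's proof: use Lemma~\ref{lem:ratminnonGolodskel} to get $\overline{\zk}$ rationally in $\mathcal{W}$, then Lemma~\ref{lem:wedgeofsphereshighlyconnCW} (really \cite[Lemma~5.1]{HT3}) to upgrade this to $\overline{\zk}\in\mathcal{W}$ after the stated localisation, and finally Corollary~\ref{cor:MbarwedgeofsphereandMoore} via Remark~\ref{localMfib}. Your concern about the stable-range hypothesis $n_X\leq 2m_X-1$ of Lemma~\ref{lem:wedgeofsphereshighlyconnCW} is well placed---the paper does not verify it either---and your proposed fallback via the integral co-$H$ structure on $\overline{\zk}$ (already established inside the proof of Lemma~\ref{lem:ratminnonGolodskel}) together with \cite[Lemma~5.1]{HT3} is precisely what makes the argument go through without that hypothesis.
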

\begin{proof}
    By Lemma ~\ref{lem:ratminnonGolodskel}, rationally, $\overline{\zk} \in \mathcal{W}$. The space $\overline{\zk}$ is $(2k+2)$-connected and $(m+n-2k-2)$-dimensional. Lemma~\ref{lem:wedgeofsphereshighlyconnCW} implies that localised away from primes $p$ appearing as $p$-torsion in $H_*(\zk)$ and primes $p \leq \frac{m+n-4k-2}{2}$,  $\overline{\zk} \in \mathcal{W}$. Hence, the hypotheses of Corollary ~\ref{cor:MbarwedgeofsphereandMoore} are satisfied locally which gives the asserted result.
\end{proof}

%%% The bib}liography %%%
\bibliographystyle{amsalpha}

\end{document}